\theoremstyle{plain}
\newtheorem{theorem}{Theorem}[section]
\newtheorem*{theorem A}{Theorem A}
\newtheorem*{theorem B}{Theorem B}
\newtheorem{lemma}{Lemma}[section]
\newtheorem{proposition}{Proposition}[section]
\newtheorem{corollary}{Corollary}[section]
\newtheorem{definition}{Definition}[section]
\newtheorem{example}{Example}[section]
\numberwithin{equation}{section}
\theoremstyle{remark}
\newtheorem{remark}{Remark}[section]
 \numberwithin{equation}{section}
\newtheorem*{Theorem A}{{\bf Theorem A}}
\newtheorem*{Theorem B}{{\bf Theorem B}}
\newtheorem*{Theorem C}{Theorem C}
 \numberwithin{equation}{section}
\def\<{\left < }
\def\>{\right >}
\def\({\left ( }
\def\){\right )}
\begin{document}

\title[Vanishing Theorem]{On vanishing theorems for vector bundle valued $p$-forms and their applications}

\author[Yuxin Dong]{Yuxin Dong$^*$}
\address{Institute of Mathematics\\
\newline {Fudan University, Shanghai 200433}\\
\newline {P.R. China}\\
\newline {And}\\
\newline {Key Laboratory of Mathematics}\\
\newline {for Nonlinear Sciences}\\
\newline {Ministry of Education}}
\email{yxdong@fudan.edu.cn}
\author[Shihshu Walter Wei]{Shihshu Walter Wei$^{**}$}
\address{Department of Mathematics\\
\newline {University of Oklahoma}\\
\newline {Norman, Oklahoma 73019-0315}\\
\newline {U.S.A.}}
\email{wwei@ou.edu}

\keywords{F-energy, conservation law, vanishing theorem, Born-Infeld theory}
\subjclass[2000]{Primary: 58E20, 53C21, 81T13}
\thanks {$^*$Supported by NSFC grant No 10971029, and NSFC-NSF grant
No 1081112053 \\
$^{**}$ Research was partially supported by NSF Award No DMS-0508661, OU Presidential International Travel Fellowship,
and OU Faculty Enrichment Grant.}

\maketitle





\begin{abstract}{ Let $F: [0, \infty) \to [0, \infty)$ be a strictly increasing $C^2$ function with $F(0)=0$. We unify the concepts of $F$-harmonic maps, minimal hypersurfaces, maximal spacelike hypersurfaces, and Yang-Mills Fields, and introduce $F$-Yang-Mills fields, $F$-degree, $F$-lower degree, and generalized Yang-Mills-Born-Infeld fields (with the plus sign or with the minus sign) on manifolds. When $F(t)=t\, , \frac 1p(2t)^{\frac p2}\, , \sqrt{1+2t} -1\, ,$ and
$1-\sqrt{1-2t}\, ,$  the $F$-Yang-Mills field becomes an ordinary Yang-Mills field, $p$-Yang-Mills field, a generalized Yang-Mills-Born-Infeld field with the plus sign, and a generalized Yang-Mills-Born-Infeld field with the minus sign on a manifold respectively. We also introduce the
 $E_{F,g}-$energy functional (resp. $F$-Yang-Mills functional) and derive the first variational formula of the $E_{F,g}-$energy functional (resp. $F$-Yang-Mills functional)   with applications. In a more general frame, we use a unified method to study the stress-energy tensors that arise from calculating the rate of change of various functionals when the metric of the domain or base manifold is changed. These stress-energy tensors are naturally linked to $F$-conservation laws and yield monotonicity
formulae, via the coarea formula and comparison theorems in Riemannian geometry.
Whereas a ``microscopic" approach to some of these monotonicity formulae leads to celebrated blow-up techniques and regularity theory in geometric measure theory, a ``macroscopic" version of these monotonicity inequalities enables us to derive
some Liouville type results and vanishing theorems for $p-$forms with values in vector
bundles, and to investigate constant Dirichlet boundary value problems for $1$-forms. In particular, we obtain Liouville theorems for
$F-$harmonic maps (which include harmonic maps, $p$-harmonic maps, exponentially harmonic maps, minimal graphs and maximal space-like hypersurfaces, etc), $F-$Yang-Mills fields, extended Born-Infeld fields, and generalized Yang-Mills-Born-Infeld fields (with the plus sign and with the minus sign) on manifolds etc.  As another consequence, we obtain the unique constant solution of the constant Dirichlet boundary value problems on starlike domains for vector bundle-valued $1$-forms satisfying an $F$-conservation law, generalizing and refining the work of Karcher and Wood on
harmonic maps. We also obtain generalized Chern type results for constant
mean curvature type equations for $p-$forms on $\Bbb{R}^m$ and on manifolds $M$ with the global doubling property by a different approach. The case $p=0$ and $M=\mathbb{R}^m$ is due to Chern.}
\end{abstract}

\section{Introduction}

A theorem due to Garber, Ruijsenaars, Seiler and Burns [GRSB]
states that every harmonic map $u:$ $\Bbb{R}^m\rightarrow S^m$
with finite energy must be constant($m>2$). This result has been
generalized by Hildebrandt [Hi] and Sealey [Se1] to
harmonic maps into arbitrary Riemannian manifolds from more general domains, for example from an hyperbolic $m$-space form, or from $\Bbb{R}^m$ with certain globally conformal
flat metrics, where $m > 2\, .$
In the context of harmonic
maps, the stress-energy tensor was introduced and studied in
detail by Baird and Eells [BE]. Following Baird-Eells [BE],
Sealey [Se2] introduced the stress-energy tensor for vector
bundle valued $p-$forms and established some vanishing theorems
for $L^2$ harmonic $p-$forms. Liouville
type theorems for vector bundle valued
harmonic forms or forms satisfying certain conservation laws have been treated by [KW] and [Xi1]. These follow immediately from monotonicity formulae.
A similar technique was also used by [EF1] and [EF2] to show
nonexistence of $L^2-$eigenforms of the Laplacian (on functions
and differential forms) on certain complete noncompact manifolds
of nonnegative sectional curvature.

On the other hand, in [Ar], M. Ara introduced the $F-$harmonic map and its associated
stress-energy tensor. Let $F:[0,\infty )\rightarrow [0,\infty )$
be a $C^2$
function such that $F^{\prime }>0$ on $[0,\infty )\, ,$ and $F(0)=0$. A smooth map $%
u:M\rightarrow N$ between two Riemannian manifolds is said to be an
\emph{$F-$harmonic map} if it is a critical point of the following
$F-$energy functional $E_F$ given by
\begin{equation}
E_F(u)=\int_MF(\frac{|du|^2}2)dv  \tag{1.1}
\end{equation}
with respect to any compactly supported variation, where $|du|$ is
the Hilbert-Schmidt norm of the differential $du$ of $u$, and $dv$ is the volume element of $M$. When
$F(t)=t$, $\frac 1p(2t)^{\frac p2}$, $(1+2t)^{\alpha}\, (\alpha >1, \dim M=2)\, ,$ and $e^t$, the $F-$harmonic
map becomes a harmonic map, a $p-$harmonic map, an $\alpha$-harmonic map, and an
exponentially harmonic map respectively. One of these striking features is that we can use, for example $p$-harmonic maps to study topics or problems that do not seem to be approachable by ordinary harmonic maps (in which $p=2$)(see e.g. [We2,3,LWe]).

In addition to the above examples, $F-$energy functionals and their critical points arise widely in
geometry and physics. Recall that a minimal hypersurface in
$\Bbb{R}^{m+1}$, given as the graph of the function $u$ on a
Euclidean domain satisfies the following differential equation and is a solution of  Plateau's problem (for any closed $m-1$-dimensional submanifold in the minimal graph as a given boundary):
\begin{equation}
\operatorname{div}\left( \frac{\nabla u}{\sqrt{1+|\nabla u|^2}}\right) =0  \tag{1.2}
\end{equation}
If a maximal spacelike hypersurface in Minkowski space $\mathbb{R}^{n,1}$ $\big ($ with the coordinate $(t, x^1,\cdots,x^n)$ and the metric $ds^2 = dt^2 - \sum_{i=1}^n (dx^i)^2\, \big )$ is given as the graph of the
function $v$ on a Euclidean domain, then the function $v$
satisfies
\begin{equation}
\operatorname{div}\left( \frac{\nabla v}{\sqrt{1-|\nabla v|^2}}\right) =0\, .  \tag{1.3}
\end{equation}
Obviously the solutions $u$ and $v$ are $F-$harmonic maps from a domain in
$\Bbb{R}^m$ to $\Bbb{R}$ with $F=\sqrt{1+2t}$ $-1$ and
$F=1-\sqrt{1-2t}$ respectively, with respect to any compactly supported variation. In [Ca], Calabi showed that
equations (1.2) and (1.3) are equivalent over any simply connected
domain in $\Bbb{R}^2$. Along the lines of Calabi, Yang [Ya] showed
that, for $m=3$, equations (1.2) and (1.3) over a simply connected
domain are, respectively, equivalent instead to the vector
equations
\begin{equation}
\nabla \times \left( \frac{\nabla \times A}{\sqrt{1\mp |\nabla \times A|^2}}
\right) =0  \tag{1.4}
\end{equation}
(where $A$ is a vector field in $\Bbb{R}^3$ and $\nabla \times (\, \cdot\, )\, $ is the curl of $(\, \cdot\, )$ ) which arise in the
nonlinear electromagnetic theory of Born and Infeld [BI]. This
observation leads Yang [Ya] to give a generalized treatment of
equations of (1.2) and (1.3) expressed in terms of differential
forms as follows:
\begin{equation}
\delta \left( \frac{d\omega }{\sqrt{1+|d\omega |^2}}\right) =0,\qquad \omega
\in A^p(\mathbb{R}^m)  \tag{1.5}
\end{equation}
and
\begin{equation}
\delta \left( \frac{d\sigma }{\sqrt{1-|d\sigma |^2}}\right)
=0,\qquad \sigma \in A^q(\mathbb{R}^m)  \tag{1.6}
\end{equation}
(where $d$ is the exterior differential operator and $\delta$ is the codifferential operator), and a reformulation of Calabi's equivalence theorem in arbitrary
$n$ dimensions. Born-Infeld theory is of contemporary interest due
to its relevance in string theory ([BN], [DG], [Ke], [LY], [Ya],
[SiSiYa]). It is easy to verify that the solutions of (1.5) and
(1.6) are critical points of the following Born-Infeld type
energy functionals
\begin{equation}
E_{BI}^{+}(\omega )=\int_{\mathbb{R}^m} \sqrt{1+|d\omega |^2}-1\quad dv
\tag{1.7}
\end{equation}
and
\begin{equation}
E_{BI}^{-}(\sigma )=\int_{\mathbb{R}^m} 1-\sqrt{1-|d\sigma |^2}\quad dv
\tag{1.8}
\end{equation}
respectively. By choosing a sequence of cutoff functions and
integrating by parts, Sibner-Sibner-Yang [SiSiYa] established a
Liouville theorem for the $L^2$ exterior derivative $d\omega$ of a solution $\omega$ of (1.5). They also introduced Yang-Mills-Born-Infeld fields and
obtained a Liouville type result for finite-energy solutions of a
generalized self-dual equation reduced from the
Yang-Mills-Born-Infeld equation on $\Bbb{R}^4$.

In this paper, we unify the concepts of $F$-harmonic maps, minimal hypersurfaces in Euclidean space, maximal spacelike hypersurfaces in Minkowski space, and Yang-Mills Fields, and introduce $F$-Yang-Mills fields, $F$-degree, $F$-lower degree, and generalized Yang-Mills-Born-Infeld fields (with the plus sign or with the minus sign) on manifolds(cf. Definitions 3.2, 4.1, 6.1 and 8.1). When $F(t)=t\, ,$ $\frac 1p(2t)^{\frac p2}\, ,$ $\sqrt{1+2t} -1\, ,$ and
$1-\sqrt{1-2t}\, ,$  the $F$-Yang-Mills field becomes an ordinary Yang-Mills field, a $p$-Yang-Mills field, a generalized Yang-Mills-Born-Infeld field with the plus sign, and a generalized Yang-Mills-Born-Infeld field with the minus sign on a manifold respectively. We also introduce the
 $E_{F,g}-$energy functional (resp. $F$-Yang-Mills functional) and derive the first variational formula of the $E_{F,g}-$energy functional (resp. $F$-Yang-Mills functional) (Lemmas 2.5 and 3.1) with applications. In a more general frame, we use a unified method to study the stress-energy tensors that arise from calculating the rate of change of various functionals when the metric of the domain or base manifold is changed. These stress-energy tensors lead to a fundamental integral
formula (2.10), are naturally linked to $F$-conservation laws. For example, we prove that every $F-$Yang-Mills
field satisfies an $F$-conservation law. In particular, every $p-$Yang-Mills
field satisfies a $p$-conservation law (cf. Theorem 3.1 and Corollary 3.1). As an immediate consequence, the simplified integral formula (2.11), from (2.10) holds
for vector bundle valued forms satisfying an $F-$conservation law in general, and holds for  $F-$Yang-Mills
field in particular. This yields monotonicity
inequalities, via the coarea formula and comparison theorems in Riemannian geometry (cf. Theorem 4.1 and Proposition 4.1). Whereas a ``microscopic" approach to monotonicity formulae leads to celebrated blow-up techniques due to E. de-Giorgi [Gi] and W.L. Fleming [Fl], and regularity theory in geometric measure theory(cf. [FF,A,SU,PS,HL,Lu]). For example, the regularity results of Allard [A] depend on the monotonicity formulae for varifolds. The regularity results of Schoen and
Uhlenbeck [SU] depend on the monotonicity formulae for harmonic
maps which they derived for energy minimizing maps;
monotonicity properties are also dealt with by Price and Simon [PS] for
Yang-Mills fields, and by Hardt-Lin [HL] and Luckhaus [Lu] for $p$-harmonic maps.
A ``macroscopic" version of these monotonicity formulae enable us to
derive some Liouville type results and vanishing theorems under suitable growth conditions on
Cartan-Hadamard manifolds or manifolds which possess a pole with
appropriate curvature assumptions (e.g. Theorems 5.1 and 5.2). In particular, our results are
applicable to $F-$harmonic maps, $F-$Yang-Mills fields, extended Born-Infeld fields, and generalized Yang-Mills-Born-Infeld fields (with the plus sign or with the minus sign) on manifolds, and obtain the first vanishing theorem for $p$-Yang-Mills fields (cf. Theorems 5.3-5.8).
In fact, we introduce the following
$E_{F,g}-$energy functional
\begin{equation}
E_{F,g}(\sigma )=\int_MF(\frac{|d^\nabla \sigma |^2}2) dv_g\tag{2.12}
\end{equation}
for forms $\sigma \in A^{p-1}(\xi )$ with values in a Riemannian vector bundle $\xi$,
or study an even more general functional $\mathcal{E}_{F,g}(\omega )$
for forms $\omega \in A^p(\xi )$ (see (2.5)), introduced by Lu-Shen-Cai [LSC].
Naturally,
the stress-energy tensor associated with
$E_{F,g}(\sigma )$ or $\mathcal{E}_{F,g}(\omega )$ plays an
important role in establishing Liouville type results for extremals of $E_{F,g}$ or forms satisfying an $F-$conservation
law.

Our growth assumptions in Liouville type theorems in the general settings (cf. (5.1), (5.4), Theorems 5.1 and 5.2) are weaker than the assumption of \emph {finite energy} for harmonic maps due to Garber, Ruijsenaars, Seiler and Burns [GRSB], Sealey [Se1], and others, or \emph {finite $F$-energy} for $F$-harmonic maps due to M. Kassi [Ka], or \emph {$L^p$ growth} for vector bundle valued forms due to J.C. Liu [Li1], or \emph {the slowly divergent
$F-$energy condition}(e.g. (5.3)) for harmonic maps and Yang-Mills fields that was first introduced by H.S. Hu in [Hu1,2], for $F$-harmonic maps due to Liao and Liu [LL2],  and for an extremal of $\mathcal{E}_{F,g}$-energy functional treated by M. Lu, W.W. Shen and K.R. Cai [LSC](see Theorem 10.1, Examples 10.1 and 10.2 in Appendix).

Furthermore, our estimates in the monotonicity formulae are sharp in the sense that in special cases, they recapture the monotonicity formulae of harmonic maps [SU] and
Yang-Mills field [PS] (cf. Corollary 4.1. and Remark 4.2).

In addition to establishing vanishing theorems and Liouville
type results, the monotonicity formulae may be used to investigate
the constant Dirichlet boundary-value problem as well. We obtain the unique constant solution of the constant Dirichlet boundary value problem on starlike domains for vector bundle-valued $1$-forms satisfying an $F$-conservation law (cf. Theorem 6.1), generalizing and refining the work of Karcher and Wood on
harmonic maps [KW]. Notice that our constant boundary-value result holds for any
starlike domain, while the original result in [KW] was stated for
a disc domain. For an extended
Born-Infeld field $\omega \in A^p(\Bbb{R}^m)\, $ with the plus sign,  we give an upper bound of the Born-Infeld type energy $E_{BI}^{+}(\omega;G(\rho))$ of the $p$-form $\omega$ over its ``graph" $G(\rho)$ in $\mathbb{R}^{m+k}$ (cf. Proposition 7.1). This recaptures the volume estimate for the
minimal graph of $f$ due to P. Li and J.P. Wang, when $\omega =f\in A^0(\Bbb{R}^m)=C^\infty
(\Bbb{R}^m)$ (cf. [LW]).

As further applications,
we obtain vanishing theorems for extended Born-Infeld fields (with the plus sign or with the minus sign) on manifolds under an appropriate growth condition on $E_{BI}^{\pm}$-energy, and for generalized Yang-Mills-Born-Infeld fields (with the plus sign or with the minus sign)  on manifolds under an appropriate growth condition on $\mathcal{YM}_{BI}^{\pm}$-energy.
(cf. Theorems 7.1, 8.1, and 8.2, Propositions 7.2 and 8.1). The case $M=\mathbb{R}^m$ and $d \omega \in L^2\, ,$ where $\omega$ is a Born-Infeld field (hence $\omega$ has finite $E_{BI}$-energy, by the inequality $\sqrt {1+t^2} -1 \le \frac {t^2}2$ for any $t \in \mathbb{R}$)
is due to L. Sibner, R. Sibner and Y.S. Yang(cf. [SiSiYa]).

Being motivated by the work in [We1,2] and [LWW], we consider constant mean curvature type
equations for $p-$forms on $\Bbb{R}^m$ and thereby obtain generalized Chern type results for constant
mean curvature type equations for $p-$forms on $\Bbb{R}^m$ and on manifolds with the global doubling property by a different approach(cf. Theorems 9.1-9.4). The case $p=0$ and $M=\mathbb{R}^m$ is due to Chern (cf. Corollary 9.1).

This paper is organized as follows.  Generalized $F$-energy functionals and $F$-conservation laws are given in section 2. In section 3, we introduce $F$-Yang-Mills fields. In section 4, we derive monotonicity formulae.  Liouville type results and vanishing theorems are established in three subsections 5.1-5.3 of section 5. In section 6, we treat constant Dirichlet Boundary-Value Problems for vector valued $1$-forms. Extended Born-Infeld fields and exact forms are presented in section 7. In section 8, we introduce generalized Yang-Mills-Born-Infeld
fields (with the plus sign and with the minus sign) on manifolds. Generalized Chern type results on manifolds are investigated in sections 9. In the last section, we provide an appendix of a
theorem on $\mathcal{E}_{F,g}$-energy growth.

Throughout this paper
let $F: [0, \infty) \to [0, \infty)$ be a strictly increasing $C^2$ function with $F(0)=0$, and
let $M$ denote a smooth $m-$dimensional
Riemannian manifold (mostly $m>2$); all data will be assumed
smooth for simplicity unless otherwise indicated.

\section{ Generalized $F$-energy Functionals and $F$-Conservation Laws}

Let $(M,g)$ be a smooth Riemannian manifold. Let $\xi :E\rightarrow M$ be a smooth Riemannian vector bundle over $(M,g)\, ,$ i.e. a vector bundle such that at each fiber is equipped with a positive inner product $\langle \quad , \quad \rangle_E\, .$
 Set $A^p(\xi )=\Gamma (\Lambda
^pT^{*}M\otimes E)$ the space of smooth $p-$forms on $M$ with
values in the vector bundle $\xi :E\rightarrow M$. The {\it exterior
differential operator} $d^\nabla :A^p(\xi )\rightarrow
A^{p+1}(\xi )$ relative to the connection $\nabla ^E$ is given by
\begin{equation}
\aligned
 d^\nabla \sigma \,
(X_1,...,X_{p+1})=&\sum_{i=1}^{p+1}(-1)^{i+1}\nabla
_{X_i}^E(\sigma (X_1,...,\widehat{X}_i,...,X_{p+1})) \\
&+\sum_{i<j}(-1)^{i+j}\sigma ([X_i,X_j],X_1,...,\widehat{X}_i,...,\widehat{X}%
_j,...,X_{p+1})\endaligned \tag{2.1}
\end{equation}
where the symbols covered by $\, \, \widehat{}\, \, $ are omitted. Since the
Levi-Civita connection on $TM$ is torsion-free, we also have
\begin{equation}
(d^\nabla \sigma
)(X_1,...,X_{p+1})=\sum_{i=1}^{p+1}(-1)^{i+1}(\nabla _{X_i}\sigma
)(X_1,...,\widehat{X}_i,...,X_{p+1})  \tag{2.2}
\end{equation}

For two forms $\omega ,\omega ^{\prime }\in A^p(\xi )$, the
induced inner product is defined as follows:
\begin{equation}
\aligned
 \langle\omega ,\omega ^{\prime }\rangle&=\sum_{i_1<\cdots <i_p}\langle\omega
(e_{i_1},...,e_{i_p}),\omega ^{\prime }(e_{i_1},...,e_{i_p})\rangle_E \\
&=\frac 1{p!}\sum_{i_1,...,i_p}\langle\omega
(e_{i_1},...,e_{i_p}),\omega ^{\prime
}(e_{i_1},...,e_{i_p})\rangle_E
\endaligned\tag{2.3}
\end{equation}
where $\{e_1, \cdots e_m\}$ is a local orthonormal frame field on $(M,g)\, .$ Relative to the Riemannian structures of $E$ and $TM$, the
{\it codifferential operator} $\delta ^\nabla :A^p(\xi )\rightarrow
A^{p-1}(\xi )$ is characterized as the adjoint of $d$ via the
formula
$$
\int_M\langle d^\nabla \sigma ,\rho \rangle dv_g=\int_M\langle
\sigma ,\delta ^\nabla \rho\rangle dv_g
$$
where $\sigma \in A^{p-1}(\xi ),\rho \in A^p(\xi )$ , one of which
has compact support, and $dv_g$ is the volume element associated with the metric $g$ on $TM\, .$ Then
\begin{equation}
(\delta ^\nabla \omega )(X_1,...,X_{p-1})=-\sum_i(\nabla
_{e_i}\omega )(e_i,X_1,...,X_{p-1})  \tag{2.4}
\end{equation}

For $\omega \in A^p(\xi )$, set $|\omega|^2 = \langle \omega, \omega \rangle$ defined as in $(2.3)\, .$ The  authors of [LSC] defined the
following $\mathcal{E}_{F,g}$-energy functional given by
\begin{equation}
\mathcal{E}_{F,g}(\omega )=\int_MF(\frac{|\omega |^2}2)dv_g  \tag{2.5}
\end{equation}
where $F:[0,+\infty )\rightarrow
[0,+\infty )$ is as before. For our purpose, we also
allow the domain of $F$ to be $[0,c)\, ,$ where $c$ is a
positive number. In fact, we will study the case
$F:[0,\frac 12)\rightarrow [0,1)$ in Section 7.

The {\it stress-energy associated with the $\mathcal{E}_{F,g}$-energy functional} is
defined as follows (cf. [BE], [Ba], [Ar], [LSC]):
\begin{equation}
S_{F,\omega }(X,Y)=F(\frac{|\omega |^2}2)g(X,Y)-F^{\prime }(\frac{|\omega |^2%
}2)(\omega \odot \omega )(X,Y)  \tag{2.6}
\end{equation}
where $\omega \odot \omega $ denotes a $2-$tensor defined by:
\begin{equation}
(\omega \odot \omega )(X,Y)=\langle i_X\omega ,i_Y\omega \rangle
\tag{2.7}
\end{equation}
Here $\langle\quad , \quad \rangle$ is the induced inner product on $A^{p-1}(\xi)\, ,$ and $i_X\omega$ is the interior multiplication by the vector
field $X$ given by
$$
(i_X\omega)(Y_1,\ldots,Y_{p-1})=\omega(X,Y_1,\ldots,Y_{p-1})
$$
for $\omega \in A^{p}(\xi)$ and any vector fields $Y_{l}$ on $M$,
$1\leq l\leq p-1$.
When $F(t)=t$ and $\omega =du$ for a map
$u:M\rightarrow N$, $S_{F,\omega }$ is
just the {\it stress-energy tensor} introduced in [BE].

For two $2-$tensors $T_1,T_2\in \Gamma (\otimes ^2T^{*}M)$, their
inner product is defined as follows:
\begin{equation}
\langle T_1,T_2\rangle=\sum_{i,j}T_1(e_i,e_j)T_2(e_i,e_j)
\tag{2.8}
\end{equation}
where $\{e_i\}$ is an orthonormal basis with respect to $g$.

Suppose $M$ is a complete Riemannian manifold. We calculate the rate of change of the
$\mathcal{E}_{F,g}$-energy integral $\mathcal{E}_{F,g}(\omega )$ when the metric $g$ on the domain or base manifold is changed. To this end, we consider a compactly supported smooth one-parameter variation of the metric $g\, ,$ i.e. a smooth family of metrics $g_s$ such that $g_0=g\, .$  Set $\delta g =\partial g_s /\partial
s |_{s =0}\, .$ Then $\delta g$ is a smooth $2$-covariant symmetric tensor field on
$M$ with compact support.
\begin{lemma}
For $\omega \in A^p(\xi )$ $($$p\geq 1$$)$, then
$$
\frac{d\mathcal{E}_{F,g_s }(\omega )}{ds
}|_{s =0}=\frac 12\int_M\langle S_{F,\omega },\delta g
\rangle dv_g
$$
where $S_{F,\omega }$ is as in (2.6).
\end{lemma}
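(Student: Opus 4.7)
The plan is to differentiate the integrand of $\mathcal{E}_{F,g_s}(\omega)=\int_M F(|\omega|_{g_s}^2/2)\,dv_{g_s}$ pointwise in $s$ at $s=0$, using the fact that $\omega$ itself does not vary with $s$. The $s$-dependence enters only through (a) the norm $|\omega|_{g_s}^2$, because the $p$ covariant slots of $\omega$ are contracted with inverse metrics in the induced inner product (2.3), and (b) the volume element $dv_{g_s}$. I would compute these two contributions separately and then assemble them into the stress-energy tensor $S_{F,\omega}$ defined in (2.6). Differentiation under the integral is justified by the compact support of $\delta g$.

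The variation of the volume form is the standard identity
$$\frac{d}{ds}\bigg|_{s=0} dv_{g_s}=\tfrac{1}{2}\,\mathrm{tr}_g(\delta g)\,dv_g=\tfrac{1}{2}\langle g,\delta g\rangle\,dv_g,$$
which, after multiplication by the pointwise value $F(|\omega|^2/2)$ that is frozen at $s=0$, yields the $F(|\omega|^2/2)\,g$-piece of $S_{F,\omega}$.

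For the variation of $|\omega|_{g_s}^2$, I would fix a point $x\in M$ and choose a frame $\{e_i\}$ that is $g$-orthonormal at $x$, so that $g^{ij}=\delta^{ij}$ and $\tfrac{d}{ds}\big|_{s=0} g_s^{ij}=-\delta g_{ij}$. Writing
$$|\omega|_{g_s}^2=\frac{1}{p!}\,g_s^{i_1 j_1}\cdots g_s^{i_p j_p}\,\langle \omega(e_{i_1},\ldots,e_{i_p}),\omega(e_{j_1},\ldots,e_{j_p})\rangle_E,$$
the product rule produces $p$ terms, which by the total antisymmetry (hence symmetry of the squared components in the choice of slot) coincide and contribute a combinatorial factor $p/p!=1/(p-1)!$. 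Comparing with definition (2.7) of $\omega\odot\omega$ and the inner product (2.3) on $A^{p-1}(\xi)$, this collapses exactly to
$$\frac{d}{ds}\bigg|_{s=0}|\omega|_{g_s}^2=-\sum_{k,l}\delta g(e_k,e_l)\,\langle i_{e_k}\omega,i_{e_l}\omega\rangle=-\langle \omega\odot\omega,\delta g\rangle,$$
so that $\tfrac{d}{ds}\big|_{s=0}F(|\omega|_{g_s}^2/2)=-\tfrac{1}{2}F'(|\omega|^2/2)\langle\omega\odot\omega,\delta g\rangle$.

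Adding the two contributions, integrating over $M$, and recognizing the integrand via (2.6) gives
$$\frac{d\mathcal{E}_{F,g_s}(\omega)}{ds}\bigg|_{s=0}=\frac{1}{2}\int_M\bigl\langle F(\tfrac{|\omega|^2}{2})g-F'(\tfrac{|\omega|^2}{2})\,\omega\odot\omega,\,\delta g\bigr\rangle\,dv_g=\frac{1}{2}\int_M\langle S_{F,\omega},\delta g\rangle\,dv_g.$$
The only real obstacle is bookkeeping: one must carefully track the factor $1/p!$ against the $p$ identical summands from the product rule, and verify at a $g$-orthonormal point that the resulting tensorial expression is genuinely $-\langle \omega\odot\omega,\delta g\rangle$ rather than a symmetrized variant. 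Once this identification is made, the lemma is immediate.
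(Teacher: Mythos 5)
Your proposal is correct and follows essentially the same route as the paper: split the derivative into the variation of $|\omega|_{g_s}^2$ and the variation of $dv_{g_s}$, then assemble the two pieces into $S_{F,\omega}$. The only difference is that the paper simply quotes both variational identities from [Ba], whereas you derive the formula $\frac{d}{ds}|_{s=0}|\omega|_{g_s}^2=-\langle\omega\odot\omega,\delta g\rangle$ directly (and your bookkeeping of the factor $p\cdot(p-1)!/p!=1$ is right).
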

\begin{proof} From [Ba], we know that
$$
\frac{d|\omega |_{g_s }^2}{ds
}|_{s =0}=-\langle\omega \odot \omega ,\delta g\rangle
$$
and
$$
\frac d{ds } \, dv_{g_s}\,
_{|_{s =0}}=\frac 12\langle g,\delta g \rangle dv_g
$$
Then
$$
\aligned \frac{d\mathcal{E}_{F,g_s }(\omega )}{ds }|_{s
=0} &=\int_MF^{\prime }(\frac{|\omega |^2}2)\frac d{ds
} \big(\frac{|\omega |_{g_s }^2}2\big) |_{s =0} dv_g
+\int_MF(\frac{|\omega |^2}2)\frac d{ds } \, dv_{g_s}\,
_{|_{s =0}} \\
&=\frac 12\int_M\langle F(\frac{|\omega |^2}2)g-F^{\prime
}(\frac{|\omega |^2}2)\omega \odot \omega ,\delta g \rangle dv_g \\
&=\frac 12\int_M\langle S_{F,\omega },\delta g \rangle dv_g
\endaligned$$
\end{proof}
\begin{remark} When $F(t)=t$, the above result was derived by
Sanini in [San] and by Baird in [Ba].
\end{remark}
For a vector field $X$, we denote by $\theta _X$ its dual one
form, i.e., $ \theta _X(\cdot)=g(X,\cdot )$. By definition, the
$2$-tensor $\nabla \theta _X$ is given by
\begin{equation}
\aligned
(\nabla \theta _X)(Y,Z)&=(\nabla _Y\theta _X)(Z) \\
&=Y(\theta _X(Z))-\theta _X(\nabla _YZ) \\
&=g(\nabla _YX,Z)
\endaligned\tag{2.9}
\end{equation}
\begin{lemma} $($cf. $[Xi1]$$)$
$$
\aligned \nabla _X(\frac{|\omega |^2}2)&=\langle i_Xd^\nabla
\omega +d^\nabla i_X\omega ,\omega \rangle-\langle\omega \odot
\omega ,\nabla\theta_X\rangle\\
\langle d^\nabla i_X\omega ,\omega \rangle&=\sum_{j_1<\cdots
<j_{p-1}{; i}}\langle\omega (e_i,e_{j_1},...,e_{j_{p-1}}),(\nabla
_{e_i}\omega)(X,e_{j_1},...,e_{j_{p-1}})\rangle\\
&\qquad + \langle\omega \odot \omega,\nabla \theta _X\rangle\endaligned
$$
\end{lemma}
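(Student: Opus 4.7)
The plan is a direct pointwise computation. I would work at an arbitrary point $q\in M$ with a local orthonormal frame $\{e_i\}$ satisfying $\nabla_{e_i}e_j(q)=0$, so that covariant derivatives at $q$ reduce to ordinary derivatives of components. Using (2.2) together with the derivation property $\nabla_Y(i_X\omega)=i_X\nabla_Y\omega+i_{\nabla_Y X}\omega$, I expand both $i_X d^\nabla\omega$ and $d^\nabla i_X\omega$. The cross terms $(\nabla_{Y_k}\omega)(X,\ldots)$ appear in each expansion with opposite signs and cancel upon addition, producing the Cartan-type identity
\begin{equation*}
(i_X d^\nabla\omega+d^\nabla i_X\omega)(Y_1,\ldots,Y_p)=(\nabla_X\omega)(Y_1,\ldots,Y_p)+\sum_{k=1}^{p}(-1)^{k+1}\omega(\nabla_{Y_k}X,Y_1,\ldots,\hat Y_k,\ldots,Y_p).
\end{equation*}

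To obtain the first identity, I then pair this equation with $\omega$. The $\nabla_X\omega$ term yields $\langle\nabla_X\omega,\omega\rangle=\nabla_X(|\omega|^2/2)$ by the Leibniz rule applied to (2.3). For the remainder, I use antisymmetry of $\omega$ to move $\nabla_{Y_k}X$ into slot $k$ (which absorbs the sign $(-1)^{k-1}$), expand $\nabla_{e_i}X=\sum_j(\nabla\theta_X)(e_i,e_j)e_j$ in the frame, and invoke $(\omega\odot\omega)(e_i,e_j)=\langle i_{e_i}\omega,i_{e_j}\omega\rangle_E$ from (2.7). By symmetry of the free indices, the $k$-sum collapses to $p$ identical copies of the $k=1$ contribution, matching the factor $1/(p-1)!$ extracted from (2.3); the net outcome is $\langle\omega\odot\omega,\nabla\theta_X\rangle$, and a rearrangement gives the first identity.

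For the second identity, I return to the expansion of $d^\nabla i_X\omega$, namely $\sum_{k=1}^{p}(-1)^{k+1}\bigl[(\nabla_{Y_k}\omega)(X,Y_1,\ldots,\hat Y_k,\ldots,Y_p)+\omega(\nabla_{Y_k}X,Y_1,\ldots,\hat Y_k,\ldots,Y_p)\bigr]$. Pairing the $\omega(\nabla_{Y_k}X,\ldots)$ piece with $\omega$ reproduces $\langle\omega\odot\omega,\nabla\theta_X\rangle$ by the manipulation just outlined. Pairing the $(\nabla_{Y_k}\omega)(X,\ldots)$ piece with $\omega$, I again use antisymmetry of $\omega$ to move $Y_k$ to the first slot of the second factor, thereby absorbing the $(-1)^{k+1}$; by symmetry in the free indices the $k$-sum collapses, and relabelling $i=i_1$, $(j_1,\ldots,j_{p-1})=(i_2,\ldots,i_p)$ together with converting the $1/(p-1)!$ into an ordered sum over $j_1<\cdots<j_{p-1}$ (permitted by antisymmetry of both factors in the $j$'s) delivers $\sum_{j_1<\cdots<j_{p-1};\,i}\langle\omega(e_i,e_{j_1},\ldots,e_{j_{p-1}}),(\nabla_{e_i}\omega)(X,e_{j_1},\ldots,e_{j_{p-1}})\rangle_E$.

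The hard part will be purely combinatorial bookkeeping: correctly tracking the signs arising from moving $\nabla_{Y_k}X$ or $Y_k$ through the antisymmetric slots of $\omega$, and reconciling the $1/p!$ normalisation in (2.3) with the ordered summation $i_1<\cdots<i_p$ in the target expression. Once the Cartan-type identity above is established and the correction sum is recognised as an insertion of the gradient tensor $\nabla\theta_X$, both identities reduce to careful index-juggling.
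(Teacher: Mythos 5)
Your proposal is correct, and in fact supplies what the paper omits: the paper states this lemma with only the citation ``(cf.\ [Xi1])'' and gives no proof. Your Cartan-type identity $(i_Xd^\nabla\omega+d^\nabla i_X\omega)(Y_1,\ldots,Y_p)=(\nabla_X\omega)(Y_1,\ldots,Y_p)+\sum_k(-1)^{k+1}\omega(\nabla_{Y_k}X,Y_1,\ldots,\hat Y_k,\ldots,Y_p)$ is right, the sign $(-1)^{k+1}$ is exactly absorbed by the $(-1)^{k-1}$ from sliding the inserted vector through $k-1$ slots, the $k$-sum does collapse to $p$ copies of the $k=1$ term turning $1/p!$ into the $1/(p-1)!$ of the induced inner product on $(p-1)$-forms, and the conversion to the ordered sum over $j_1<\cdots<j_{p-1}$ is legitimate since both factors are antisymmetric in those indices; this is the standard computation (essentially that of [Xi1]).
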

Next, we have the following result in which $F(t)=t$ is known in [Se2] and [Xi1]:
\begin{lemma}
 Let $\omega \in
A^p(\xi )$ $($$p\geq 1$$)$ and let $S_{F,\omega }$ be the
stress-energy tensor defined by $($2.6$)$, then for any vector field
$X$ on $M$, we have
$$
\aligned (\operatorname{div} S_{F,\omega })(X)&=F^{\prime }(\frac{|\omega
|^2}2)\langle\delta ^\nabla \omega ,i_X\omega \rangle+F^{\prime
}(\frac{|\omega |^2}2)\langle i_Xd^\nabla \omega ,\omega \rangle
\\
&\qquad - \langle i_{\text{grad}(F^{\prime }(\frac{|\omega |^2}2))}\omega
,i_X\omega \rangle
\endaligned
$$
where $\text{grad} \, (\, \bullet\, )$ is the gradient vector field of $\, \bullet\, .$
\end{lemma}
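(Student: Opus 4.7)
The plan is to compute $(\operatorname{div} S_{F,\omega})(X)$ directly from the definition (2.6), working at an arbitrary point $p\in M$ in a local orthonormal frame $\{e_i\}$ normal at $p$, so that $\nabla_{e_i}e_j=0$ at $p$. With this normalization, $(\operatorname{div} S_{F,\omega})(X) = \sum_i e_i(S_{F,\omega}(e_i,X)) - \sum_i S_{F,\omega}(e_i,\nabla_{e_i}X)$. The $F(|\omega|^2/2)\,g$ piece contributes $\sum_i e_i(F(|\omega|^2/2))\,g(e_i,X) = X(F(|\omega|^2/2)) = F'(|\omega|^2/2)\,\nabla_X(|\omega|^2/2)$, since the $g(e_i,\nabla_{e_i}X)$ cross-terms cancel between the two sums.

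Next I would apply the first identity of Lemma 2.3 to rewrite $\nabla_X(|\omega|^2/2)$ as $\langle i_X d^\nabla\omega + d^\nabla i_X\omega,\omega\rangle - \langle \omega\odot\omega,\nabla\theta_X\rangle$. For the $-F'(|\omega|^2/2)\,\omega\odot\omega$ piece, the product rule produces three kinds of contributions: a ``radial'' term equal to $-\langle i_{\operatorname{grad}(F'(|\omega|^2/2))}\omega,i_X\omega\rangle$, and two terms coming from differentiating $\langle i_{e_i}\omega,i_X\omega\rangle$ along $e_i$. Expanding via $\nabla_{e_i}(i_X\omega) = i_{\nabla_{e_i}X}\omega + i_X\nabla_{e_i}\omega$, one piece cancels exactly against the remaining $F'\,(\omega\odot\omega)(e_i,\nabla_{e_i}X)$ contribution, leaving only the residuals $-F'\sum_i \langle i_{e_i}\nabla_{e_i}\omega,i_X\omega\rangle$ and $-F'\sum_i \langle i_{e_i}\omega, i_X\nabla_{e_i}\omega\rangle$ to absorb.

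The decisive step is identifying these two residuals via (2.4) and the second identity of Lemma 2.3. The codifferential formula (2.4) gives $\sum_i i_{e_i}\nabla_{e_i}\omega = -\delta^\nabla\omega$, so the first residual collapses to $+F'\langle \delta^\nabla\omega,i_X\omega\rangle$. The second residual, once its components are unpacked through the inner product formula (2.3), matches exactly the sum appearing in the second identity of Lemma 2.3, giving $-F'\langle d^\nabla i_X\omega,\omega\rangle + F'\langle \omega\odot\omega,\nabla\theta_X\rangle$. The $\langle d^\nabla i_X\omega,\omega\rangle$ and $\langle \omega\odot\omega,\nabla\theta_X\rangle$ contributions now cancel in pairs against those produced by expanding $\nabla_X(|\omega|^2/2)$, leaving precisely the three-term expression asserted in the lemma.

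I expect the main obstacle to be the careful bookkeeping of signs in the Leibniz expansions, together with the translation between the coordinate-form identity for $\langle d^\nabla i_X\omega,\omega\rangle$ in Lemma 2.3 and the invariant sum $\sum_i\langle i_{e_i}\omega, i_X\nabla_{e_i}\omega\rangle$ that arises; once this translation is made through (2.3) the required cancellations are transparent and the remaining argument is purely algebraic.
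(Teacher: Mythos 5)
Your proposal is correct and follows essentially the same route as the paper's own proof: expand $\operatorname{div}S_{F,\omega}$ in an orthonormal frame, extract $F'\nabla_X(\tfrac{|\omega|^2}{2})$ and the gradient term, and convert the residual Leibniz terms via $(2.4)$ and the two identities of Lemma 2.2 (which you cite as ``Lemma 2.3''), so that the $\langle d^\nabla i_X\omega,\omega\rangle$ and $\langle\omega\odot\omega,\nabla\theta_X\rangle$ contributions cancel in pairs. The only difference is cosmetic: you make the intermediate Leibniz expansion $\nabla_{e_i}(i_X\omega)=i_{\nabla_{e_i}X}\omega+i_X\nabla_{e_i}\omega$ and the identification $\sum_i i_{e_i}\nabla_{e_i}\omega=-\delta^\nabla\omega$ explicit, whereas the paper passes over these steps more tersely.
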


\begin{proof}By using Lemma 2.2 and (2.9), we derive the following
$$
\aligned (\operatorname{div} S_{F,\omega })(X)& = \sum_{i=1}^m\quad \nabla _{e_i}S_{F,\omega
}(e_i,X)-S_{F,\omega
}(e_i,\nabla _{e_i}X) \\
& = \sum_{i=1}^m\quad \nabla _{e_i}\big(F(\frac{|\omega |^2}2)\langle
e_i,X\rangle-F^{\prime }(\frac{|\omega |^2}
2)\langle i_{e_i}\omega ,i_X\omega \rangle\big) \\
&\qquad \qquad \quad -F(\frac{|\omega |^2}2)\langle e_i,\nabla _{e_i}X\rangle
+F^{\prime }(\frac{|\omega |^2}
2)\langle i_{e_i}\omega ,i_{\nabla _{e_i}X}\omega \rangle \\
& = \sum_{i=1}^m\quad e_i F(\frac{|\omega |^2}2)\langle e_i,X\rangle - e_i(F^{\prime
}(\frac{|\omega |^2} 2))\langle i_{e_i}\omega ,i_X\omega
\rangle\\&\qquad \qquad \quad -F^{\prime }(\frac{|\omega |^2} 2)e_i \langle
i_{e_i}\omega ,i_X\omega \rangle +F^{\prime }(\frac{|\omega
|^2}2)\langle i_{e_i}\omega ,i_{\nabla
_{e_i}X}\omega \rangle\\
&=\nabla _XF(\frac{|\omega |^2}2) - \sum_{i=1}^m\quad e_i(F^{\prime }(\frac{|\omega
|^2} 2))\langle i_{e_i}\omega ,i_X\omega \rangle\\& \qquad \qquad \qquad \qquad -F^{\prime
}(\frac{|\omega |^2} 2)e_i \langle i_{e_i}\omega ,i_X\omega
\rangle +F^{\prime }(\frac{|\omega |^2}2)\langle i_{e_i}\omega
,i_{\nabla
_{e_i}X}\omega \rangle\\
&=F^{\prime }(\frac{|\omega |^2}2)\langle i_Xd^\nabla \omega
+d^\nabla i_X\omega ,\omega \rangle-F^{\prime }(\frac{|\omega
|^2}2)\langle\omega \odot
\omega ,\nabla\theta _X\rangle \\
&\qquad -\langle i_{\text{grad}(F^{\prime }(\frac{|\omega |^2}2))}\omega
,i_X\omega
\rangle+F^{\prime }(\frac{|\omega |^2}2)\langle\delta ^\nabla \omega ,i_X\omega \rangle \\
&\qquad -F^{\prime }(\frac{|\omega |^2}2)\sum_{j_1<\cdots <j_{p-1};i}
\langle\omega (e_i,e_{j_1},...,e_{j_{p-1}}),(\nabla
_{e_i}\omega)(X,e_{j_1},...,e_{j_{p-1}})\rangle \\
&=F^{\prime }(\frac{|\omega |^2}2)\langle i_Xd^\nabla \omega
+d^\nabla
i_X\omega,\omega \rangle-\langle i_{\text{grad}(F^{\prime }(\frac{|\omega |^2}2))}\omega ,i_X\omega \rangle \\
&\qquad +F^{\prime }(\frac{|\omega |^2}2)\langle\delta ^\nabla \omega
,i_X\omega \rangle-F^{\prime }(\frac{|\omega |^2}2)\langle
d^\nabla i_X\omega ,\omega \rangle\endaligned
$$
\end{proof}

\begin{definition}
$\omega \in
A^p(\xi )$ ($p\geq 1$) is said to satisfy an
\emph {$F-$conservation law} if $S_{F,\omega }$ is divergence free, i.e. the $(0,1)-$type tensor field $\operatorname{div} S_{F,\omega }$ vanishes identically $($$\operatorname{div} S_{F,\omega }\equiv 0$$)$.
\end{definition}

\begin{lemma} $([$Ba$])$ Let $T$ be a symmetric $(0,2)-$type
tensor field. Let $X$ be a vector field, and $\theta _X$ be its dual $1$-form, then
$$
\operatorname{div} (i_XT)=(\operatorname{div} T)(X)+\langle T,\nabla \theta _X\rangle
$$
\end{lemma}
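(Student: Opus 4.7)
The plan is to expand $\operatorname{div}(i_X T)$ directly in a local orthonormal frame $\{e_i\}$, apply the Leibniz/product rule for the covariant derivative of the contraction $T(X, \,\cdot\,)$, and then recognize the two resulting sums as the two terms on the right-hand side.

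More precisely, I would first note that $i_X T$ is the $1$-form defined by $(i_X T)(Y) = T(X,Y)$, so by the standard formula for the divergence of a $1$-form,
\begin{equation*}
\operatorname{div}(i_X T) = \sum_{i=1}^m (\nabla_{e_i}(i_X T))(e_i) = \sum_{i=1}^m \bigl[e_i(T(X,e_i)) - T(X,\nabla_{e_i}e_i)\bigr].
\end{equation*}
Then I would use the product rule $e_i(T(X,e_i)) = (\nabla_{e_i}T)(X,e_i) + T(\nabla_{e_i}X, e_i) + T(X,\nabla_{e_i}e_i)$, which cancels the unwanted term $T(X,\nabla_{e_i}e_i)$ and leaves
\begin{equation*}
\operatorname{div}(i_X T) = \sum_{i=1}^m (\nabla_{e_i}T)(X,e_i) + \sum_{i=1}^m T(\nabla_{e_i}X, e_i).
\end{equation*}

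Next I would identify each piece. For the first sum, symmetry of $T$ gives $(\nabla_{e_i}T)(X,e_i) = (\nabla_{e_i}T)(e_i,X)$, so the sum is exactly $(\operatorname{div} T)(X)$ as a $1$-form evaluated at $X$. For the second sum, I use formula (2.9), namely $(\nabla \theta_X)(e_i,e_j) = g(\nabla_{e_i}X, e_j)$. Writing $\nabla_{e_i}X = \sum_j g(\nabla_{e_i}X, e_j)\, e_j$ and using symmetry of $T$ again, I obtain
\begin{equation*}
\sum_{i=1}^m T(\nabla_{e_i}X, e_i) = \sum_{i,j} T(e_i,e_j)\, (\nabla \theta_X)(e_i,e_j) = \langle T, \nabla \theta_X\rangle,
\end{equation*}
by the inner product definition (2.8). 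Combining the two pieces yields the claimed identity.

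There is no real obstacle here: the argument is a short local computation and the only thing to be careful about is the bookkeeping of the Leibniz rule for the contraction (so that the $\nabla_{e_i}e_i$ terms cancel cleanly) and the repeated use of the symmetry of $T$ to pass between $(X,e_i)$ and $(e_i,X)$ slots. One could alternatively do the computation at a fixed point $p$ in a normal frame with $\nabla_{e_i}e_j|_p = 0$, which removes the $\nabla_{e_i}e_i$ terms from the start and makes the two identifications with $(\operatorname{div} T)(X)$ and $\langle T, \nabla \theta_X\rangle$ immediate.
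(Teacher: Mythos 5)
Your proof is correct and is essentially the same computation as the paper's: expand $\operatorname{div}(i_XT)$ in an orthonormal frame, apply the Leibniz rule so the $T(X,\nabla_{e_i}e_i)$ terms cancel, and identify the two resulting sums with $(\operatorname{div}T)(X)$ and $\langle T,\nabla\theta_X\rangle$ via (2.9) and (2.8). The only difference is that you spell out the use of the symmetry of $T$ and the frame expansion of $\nabla_{e_i}X$, which the paper leaves implicit.
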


\begin{proof} Let $\{e_i\}$ be a local orthonormal frame field.
Then
$$
\aligned
\operatorname{div} (i_XT) &=\sum_{i=1}^m\big(\nabla _{e_i}(i_XT)\big)(e_i) \\
&=\sum_{i=1}^m\big(\nabla _{e_i}(T(X,e_i))-T(X,\nabla _{e_i}e_i)\big) \\
&=\sum_{i=1}^m(\nabla _{e_i}T)(X,e_i)+\sum_{i=1}^mT(\nabla _{e_i}X,e_i) \\
&=(\operatorname{div}T)(X)+\sum_{i,j=1}^mT(e_i,e_j)\, g(\nabla
_{e_i}X,e_j)\endaligned
$$
This via (2.9) proves the Lemma. \end{proof}

Let $D$ be any bounded domain of $M$ with $C^1-$boundary. By
applying $T = S_{F,\omega }$ to Lemma 2.4 and using Stokes'
Theorem, we immediately have the following
\begin{equation}
\int_{\partial D}S_{F,\omega }(X,\nu )ds_g = \int_D \langle S_{F,\omega },\nabla \theta
_X\rangle+(\operatorname{div} S_{F,\omega })(X) \quad dv_g  \tag{2.10}
\end{equation}
where $\nu $ is unit outward normal vector field along $\partial
D$ with $(m-1)$-dimensional volume element $ds_g$. In particular, if $\omega $ satisfies an $F-$conservation law, we
have
\begin{equation}
\int_{\partial D}S_{F,\omega }(X,\nu ) ds_g = \int_D\langle S_{F,\omega
},\nabla \theta _X\rangle dv_g \tag{2.11}
\end{equation}
It should be pointed out that the formulae (2.10) and (2.11) were also derived in
[LSC]. We will give some important
applications of (2.11) later.

Now we introduce a new $E_{F,g}$-energy functional
as follows: For $\sigma \in A^{p-1}(\xi )$

\begin{equation}
E_{F,g}(\sigma )=\int_MF(\frac{|d^\nabla \sigma |^2}2) dv_g\tag{2.12}
\end{equation}
This functional includes the functionals for $F-$harmonic maps (in
which $\sigma $ is a map between two Riemannian manifolds), and
Born-Infeld fields (in which $\sigma$ is the potential of an
electric field or a magnetic field and $M=\Bbb{R}^3$; cf. [Ya]) as
its special cases, etc.

\begin{lemma}[The First Variation Formula for
$E_{F,g}$-energy functional]
$$
\frac{dE_{F,g}(\sigma _t)}{dt}|_{t=0}=-\int_M\langle\tau _F(\sigma
),\eta \rangle dv_g
$$
for any $\eta \in A^{p-1}(\xi )$ with compact support, where $\sigma _t = \sigma + t\eta\, $ and $\tau _F(\sigma )=-\delta ^\nabla (F^{\prime
}(\frac{|d^\nabla \sigma |^2}2)d^\nabla \sigma )$. Furthermore, the Euler-Lagrange equation of $E_{F,g}$ is
\begin{equation}
F^{\prime }(\frac{|d^\nabla \sigma |^2}2)\tau (\sigma
)+i_{\text{grad} (F^{\prime}(\frac{|d\sigma |^2}2) )}d^\nabla \sigma =0
\tag{2.13}
\end{equation}
where $\tau (\sigma )=-\delta ^\nabla d^\nabla \sigma $.
\end{lemma}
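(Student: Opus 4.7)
The plan is to compute the variation directly by differentiating under the integral sign, then integrate by parts using the adjoint relation between $d^\nabla$ and $\delta^\nabla$, and finally extract the Euler--Lagrange equation by applying a Leibniz-type rule for $\delta^\nabla$ acting on a function times a form.

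First, since $d^\nabla$ is linear, $d^\nabla \sigma_t = d^\nabla \sigma + t\, d^\nabla \eta$, so
\[
\frac{d}{dt}\Big|_{t=0} \frac{|d^\nabla \sigma_t|^2}{2} \;=\; \langle d^\nabla \sigma,\, d^\nabla \eta \rangle.
\]
Differentiating under the integral sign (justified by the compact support of $\eta$ and smoothness of $F$) and using the chain rule yields
\[
\frac{dE_{F,g}(\sigma_t)}{dt}\Big|_{t=0}
 \;=\; \int_M F'\!\left(\tfrac{|d^\nabla \sigma|^2}{2}\right)\langle d^\nabla \sigma,\, d^\nabla \eta\rangle\, dv_g
 \;=\; \int_M \Big\langle F'\!\left(\tfrac{|d^\nabla \sigma|^2}{2}\right) d^\nabla \sigma,\, d^\nabla \eta\Big\rangle\, dv_g.
\]
Next, applying the definition of the codifferential as the $L^2$-adjoint of $d^\nabla$ on compactly supported forms (which is valid here since $\eta$ has compact support), I rewrite the right-hand side as
\[
\int_M \Big\langle \delta^\nabla\!\Big(F'\!\left(\tfrac{|d^\nabla \sigma|^2}{2}\right) d^\nabla \sigma\Big),\, \eta \Big\rangle\, dv_g
 \;=\; -\int_M \langle \tau_F(\sigma),\, \eta\rangle\, dv_g,
\]
by the definition $\tau_F(\sigma) = -\delta^\nabla\!\big(F'(\tfrac{|d^\nabla \sigma|^2}{2})\, d^\nabla \sigma\big)$. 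This establishes the first variation formula, and the Euler--Lagrange equation is obtained by setting $\tau_F(\sigma) = 0$ (since $\eta$ is arbitrary).

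To recast $\tau_F(\sigma) = 0$ in the form (2.13), I will use the Leibniz rule
\[
\delta^\nabla(f\,\omega) \;=\; f\,\delta^\nabla \omega \;-\; i_{\mathrm{grad}\, f}\, \omega
\]
for a smooth function $f$ and $\omega \in A^p(\xi)$. This follows directly from the local formula (2.4) by expanding $\nabla_{e_i}(f\omega) = e_i(f)\omega + f\nabla_{e_i}\omega$ and recognizing the resulting sum $-\sum_i e_i(f)\,\omega(e_i,\cdot)$ as $-i_{\mathrm{grad}\, f}\,\omega$. Applying this identity with $f = F'(\tfrac{|d^\nabla \sigma|^2}{2})$ and $\omega = d^\nabla \sigma$ gives
\[
\tau_F(\sigma) \;=\; F'\!\left(\tfrac{|d^\nabla \sigma|^2}{2}\right) \tau(\sigma) \;+\; i_{\mathrm{grad}(F'(\frac{|d^\nabla \sigma|^2}{2}))}\, d^\nabla \sigma,
\]
which yields (2.13).

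The computation is essentially routine; the only point requiring care is the Leibniz rule for $\delta^\nabla(f\omega)$ and the sign conventions, which determine whether the interior multiplication term appears with a plus or minus. Since this is a purely local identity independent of the connection on $\xi$, verifying it once in a local orthonormal frame suffices.
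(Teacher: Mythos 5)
Your proposal is correct and follows essentially the same route as the paper: differentiate under the integral, use the $L^2$-adjoint relation between $d^\nabla$ and $\delta^\nabla$ to obtain the first variation formula, and then expand $-\delta^\nabla\bigl(F'(\frac{|d^\nabla\sigma|^2}{2})\,d^\nabla\sigma\bigr)$ via the local formula (2.4) to get (2.13); your packaging of that last step as the Leibniz rule $\delta^\nabla(f\omega)=f\,\delta^\nabla\omega-i_{\mathrm{grad}\,f}\,\omega$ is exactly the computation the paper performs in a local orthonormal frame, with the signs coming out the same.
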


\begin{proof} We compute
$$
\aligned \frac{dE_{F,g}(\sigma +t\eta )}{dt}|_{t=0}
&=\int_MF^{\prime }(\frac{|d^\nabla \sigma |^2}2)\langle d^\nabla \sigma ,d^\nabla \eta \rangle dv_g\\
&=\int_M\langle\delta ^\nabla (F^{\prime }(\frac{|d^\nabla \sigma
|^2}
2)d^\nabla \sigma ),\eta \rangle dv_g\\
&=-\int_M\langle\tau _F(\sigma ),\eta \rangle dv_g\endaligned
$$
where
$$
\aligned \tau _F(\sigma ) &=-\delta ^\nabla (F^{\prime
}(\frac{|d^\nabla \sigma |^2}
2)d^\nabla \sigma ) \\
&=\sum_{i=1}^m \nabla _{e_i}\big( F^{\prime }(\frac{|d^\nabla \sigma |^2}2)d^\nabla
\sigma\big)(e_i,\cdots,\cdot ) \\
&=\sum_{i=1}^m e_i(F^{\prime }(\frac{|d^\nabla \sigma |^2}2))d^\nabla \sigma
(e_i,\cdots,\cdot)+F^{\prime }(\frac{|d^\nabla \sigma
|^2}2)(\nabla_{e_i}d^\nabla \sigma )(e_i,\cdots,\cdot) \\
&=F^{\prime }(\frac{|d^\nabla \sigma |^2}2)\tau (\sigma
)+i_{\text{grad} (F^{\prime }(\frac{|d\sigma |^2}2))}d^\nabla
\sigma\endaligned
$$
\end{proof}

From Lemma 2.3 and the above expression (2.13) for $\tau _F(\sigma
)$, we immediately have the following

\begin{corollary} For $\sigma \in A^{p-1}(\xi )$, we have
$$
(\operatorname{div} S_{F,d^\nabla \sigma })(X)=-\langle\tau _F(\sigma
),i_Xd^\nabla \sigma \rangle+F^{\prime }(\frac{|d^\nabla \sigma
|^2}2)\langle i_X(d^\nabla )^2\sigma ,d^\nabla \sigma \rangle
$$
In particular, if $\tau _F(\sigma )=0$ and $(d^\nabla )^2\sigma
=0$, then $\operatorname{div} S_{F,d^\nabla \sigma }=0$.
\end{corollary}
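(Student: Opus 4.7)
The plan is to derive the stated identity as an immediate specialization of Lemma 2.3 to the case $\omega = d^\nabla\sigma$, combined with the explicit expression for $\tau_F(\sigma)$ given in the proof of Lemma 2.5, namely
$$
\tau_F(\sigma) \;=\; F'\!\left(\tfrac{|d^\nabla\sigma|^2}{2}\right)\tau(\sigma) \;+\; i_{\text{grad}\,\bigl(F'(|d^\nabla\sigma|^2/2)\bigr)}\, d^\nabla\sigma,
$$
together with the definition $\tau(\sigma)=-\delta^\nabla d^\nabla\sigma$.

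First, I would apply Lemma 2.3 verbatim with $\omega$ replaced by $d^\nabla\sigma$. This produces three terms on the right-hand side: $F'(\tfrac{|d^\nabla\sigma|^2}{2})\langle \delta^\nabla d^\nabla\sigma,\, i_X d^\nabla\sigma\rangle$, the term $F'(\tfrac{|d^\nabla\sigma|^2}{2})\langle i_X d^\nabla d^\nabla\sigma,\, d^\nabla\sigma\rangle$, and the term $-\langle i_{\text{grad}(F'(|d^\nabla\sigma|^2/2))}\, d^\nabla\sigma,\, i_X d^\nabla\sigma\rangle$. The middle term is already in final form, since $d^\nabla d^\nabla\sigma = (d^\nabla)^2\sigma$, producing the coefficient of $F'$ seen in the corollary.

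Second, I would rewrite the first term using $\delta^\nabla d^\nabla\sigma = -\tau(\sigma)$, so that it becomes $-\langle F'(\tfrac{|d^\nabla\sigma|^2}{2})\tau(\sigma),\, i_X d^\nabla\sigma\rangle$. Combining this with the third term by bilinearity of the induced inner product in its first slot gives
$$
-\Bigl\langle F'\!\left(\tfrac{|d^\nabla\sigma|^2}{2}\right)\tau(\sigma) + i_{\text{grad}(F'(|d^\nabla\sigma|^2/2))}\, d^\nabla\sigma,\; i_X d^\nabla\sigma\Bigr\rangle,
$$
which by the boxed expression for $\tau_F(\sigma)$ is exactly $-\langle \tau_F(\sigma),\, i_X d^\nabla\sigma\rangle$. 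This yields the claimed formula.

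Finally, the ``in particular" assertion is immediate: if $\tau_F(\sigma)=0$ and $(d^\nabla)^2\sigma = 0$, both terms on the right-hand side vanish, so $(\operatorname{div} S_{F,d^\nabla\sigma})(X)=0$ for every vector field $X$, hence $\operatorname{div} S_{F,d^\nabla\sigma}\equiv 0$. There is no real obstacle here; the content of the corollary lies entirely in Lemma 2.3 and the first variation computation of Lemma 2.5, and the only ``work" is a careful bookkeeping of signs (in particular the sign convention $\tau(\sigma)=-\delta^\nabla d^\nabla\sigma$) and the recognition that the $F'(|d^\nabla\sigma|^2/2)\tau(\sigma)$ term and the gradient-interior-product term assemble into $\tau_F(\sigma)$.
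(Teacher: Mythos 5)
Your proposal is correct and follows exactly the paper's route: the paper derives this corollary by substituting $\omega = d^\nabla\sigma$ into Lemma 2.3 and recognizing, via the expression (2.13) for $\tau_F(\sigma)$ obtained in the proof of Lemma 2.5, that the codifferential term and the gradient-interior-product term assemble into $-\langle\tau_F(\sigma), i_X d^\nabla\sigma\rangle$. Your sign bookkeeping with $\tau(\sigma)=-\delta^\nabla d^\nabla\sigma$ and the treatment of the ``in particular'' clause are both accurate.
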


\begin{remark} In some cases, the condition $(d^\nabla
)^2\sigma =0$ is satisfied automatically. For example, if $\sigma \in A^{p-1}(M) := \Gamma (\Lambda ^{p-1} T^{\ast}M)\, ,$ or $\sigma =d\varphi \in
A^1(\varphi ^{-1}TN)$, where $\varphi :M\rightarrow N$ is a smooth
map, then we have $(d^\nabla )^2\sigma =0$.
\end{remark}

\begin{corollary}$([$BE$]$, $[$Ka$])$ Let $\varphi :M\rightarrow N$ be an $F-$harmonic map. Then $%
\operatorname{div} S_{F,d\varphi }=0$. In particular, if $F(t)=t$ and $\varphi
:M\rightarrow N$ is a harmonic map, we have $\operatorname{div} S_{\operatorname{Id},d\varphi
}=0$.
\end{corollary}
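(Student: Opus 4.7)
The plan is to derive this as an immediate specialization of Corollary 2.1, which already expresses $\operatorname{div} S_{F,d^\nabla\sigma}$ in terms of the tension-like field $\tau_F(\sigma)$ and the curvature obstruction $(d^\nabla)^2\sigma$. Concretely, I interpret the map $\varphi \colon M \to N$ formally in the framework of Corollary 2.1 by setting $\sigma = \varphi$, so that $d^\nabla\sigma = d\varphi \in A^1(\varphi^{-1}TN)$. Then the stress-energy tensor $S_{F,d\varphi}$ in the statement of the corollary matches exactly the tensor $S_{F,d^\nabla\sigma}$ appearing in the hypothesis of Corollary 2.1.

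First I would recall that the Euler--Lagrange equation characterizing $F$-harmonic maps is, in the notation of Lemma 2.5, precisely $\tau_F(\varphi) = -\delta^\nabla\bigl(F'(\tfrac{|d\varphi|^2}{2})d\varphi\bigr) = 0$; this is what it means for $\varphi$ to be a critical point of $E_F$. Next I would invoke Remark 2.2 in the case $\sigma = d\varphi \in A^1(\varphi^{-1}TN)$, which gives the Ricci-type identity $d^\nabla d\varphi = 0$ automatically, i.e. $(d^\nabla)^2\varphi = 0$ in the formal calculus used by Corollary 2.1. Both of these are invoked on the nose without further computation.

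Plugging these two vanishings into the identity of Corollary 2.1,
\[
(\operatorname{div} S_{F,d\varphi})(X) \;=\; -\langle \tau_F(\varphi), i_X d\varphi\rangle \;+\; F'\bigl(\tfrac{|d\varphi|^2}{2}\bigr)\, \langle i_X (d^\nabla)^2\varphi,\, d\varphi\rangle,
\]
both terms on the right collapse, giving $(\operatorname{div} S_{F,d\varphi})(X) = 0$ for every vector field $X$, whence $\operatorname{div} S_{F,d\varphi} \equiv 0$. The special case $F(t)=t$ follows because then $F'\equiv 1$, so $\tau_F(\varphi) = \tau(\varphi) = -\delta^\nabla d\varphi$, and the harmonic map equation $\tau(\varphi)=0$ reduces the computation to the same conclusion $\operatorname{div} S_{\operatorname{Id}, d\varphi} = 0$.

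There is essentially no conceptual obstacle here; all the substance lives in Lemma 2.3 (the divergence formula for $S_{F,\omega}$) and its reformulation in Corollary 2.1. The only minor point to justify, if one wanted to be pedantic, is the formal use of Corollary 2.1 with a map rather than a genuine section of a vector bundle; but since the identity of Lemma 2.3 with $\omega = d\varphi$ is a pointwise computation in $\varphi^{-1}TN$ involving only $d\varphi$, $d^\nabla d\varphi$, and $\delta^\nabla d\varphi$, the argument goes through verbatim once one writes $\delta^\nabla\bigl(F'(\tfrac{|d\varphi|^2}{2})d\varphi\bigr) = F'(\tfrac{|d\varphi|^2}{2})\delta^\nabla d\varphi - i_{\operatorname{grad} F'(\tfrac{|d\varphi|^2}{2})}d\varphi$ and regroups terms.
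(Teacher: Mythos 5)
Your proposal is correct and follows exactly the route the paper intends: the corollary is stated as an immediate consequence of Corollary 2.1 (the identity for $\operatorname{div} S_{F,d^\nabla\sigma}$) combined with Remark 2.2 (which supplies $(d^\nabla)^2\varphi = d^\nabla(d\varphi)=0$ from torsion-freeness) and the Euler--Lagrange equation $\tau_F(\varphi)=0$ for $F$-harmonic maps. Your closing observation that the identity is really a pointwise statement about the $\varphi^{-1}TN$-valued $1$-form $d\varphi$, so the formal substitution $\sigma=\varphi$ is harmless, correctly disposes of the only technical quibble.
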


\section{$F$-Yang-Mills Fields}
In this section we introduce $F$-Yang-Mills functionals and $F$-Yang-Mills fields.
Just as $F-$harmonic maps play a role in the space of maps between Riemannian manifolds, so do $F-$%
Yang-Mills fields in the space of curvature tensors (associated with connections on the adjoint bundles of principal $G$-bundles) over Riemannian manifolds. Let $P$ be a principal bundle
with compact structure group $G$ over a Riemannian manifold $M$.
Let $Ad(P)$ be the adjoint bundle
\begin{equation}
Ad(P)=P\times _{Ad}\mathcal{G} \tag{3.1}
\end{equation}
where $\mathcal{G} $ \ is the Lie algebra of $G$. Every connection $\rho$
on $P$ induces a connection $\nabla$ on $Ad(P)$. We also have the
Riemannian connection $\nabla ^M$ on the tangent bundle $TM$, and
the induced connection on $\Lambda ^pT^{*}M\otimes Ad(P)$. An
$Ad_G$ invariant inner product on $\mathcal{G}$ induces a fiber metric on
$Ad(P)$ and making $Ad(P)$ and $\Lambda ^pT^{*}M\otimes Ad(P)$
into Riemannian vector bundles. Although $\rho$ is not a section
of $\Lambda ^1T^{*}M\otimes Ad(P)$ , via its induced connection $\nabla $, the associated curvature $ R^\nabla$, given by $R^{\nabla}_{X,Y} = [\nabla_X, \nabla_Y]- \nabla_{[X,Y]}\, ,$ is
in $A^2(Ad(P))$. Let $\mathcal{C}$ be the space of connections $\nabla \, $ on $Ad(P)\, .$ We now introduce
\begin{definition}
The \emph{$F-$Yang-Mills
functional} is the mapping $\mathcal{YM}_F : \mathcal{C}\to \mathbb{R}^+\, $ given by
\begin{equation}
\mathcal{YM}_F(\nabla )=\int_MF(\frac 12||R ^\nabla ||^2) dv \tag{3.2}
\end{equation}
where the norm is defined in terms of the Riemannian metric on $M$ and a fixed $Ad_G$-invariant inner product on the Lie algebra $\mathcal{G}$ of $G\, .$ That is, at each point $x\in M\, ,$ its norm
$$||R^{\nabla}||^2_x = \sum_{i<j}||R^{\nabla}_{e_i,e_j}||^2_x $$
where $\{e_1, \cdots, e_n\}$ is an orthonormal basis of $T_x(M)$ and the norm of $R^{\nabla}_{e_i,e_j}$ is the standard one on Hom$(Ad(P), Ad(P))$-namely, $\langle S, U\rangle \equiv \, \text {trace}\, (S^T \circ U)\, .$
\end{definition}
\begin{definition}
A connection $\nabla$ on the adjoint bundle $Ad(P)$ is said to be an \emph{$F$-Yang-Mills connection} and its associated curvature tensor $R^{\nabla}$ is said to be an \emph{$F$-Yang-Mills field}, if $\nabla$ is a critical point of $\mathcal{YM}_F$ with respect to any compactly supported variation in the space of connections on $Ad(P)$ . A connection $\nabla$ is said to be a \emph{$p$-Yang-Mills connection} and its associated curvature tensor $R^{\nabla}$ is said to be a \emph{$p$-Yang-Mills field}, if $\nabla$ is a critical point of the $p$-Yang-Mills functional $\mathcal{YM}_p$ with respect to any compactly supported variation, where $\mathcal{YM}_p (\nabla) = \frac 1p \int _M |R^{\nabla}|^p \, dv\, ,$ and $p \ge 2$.
\end{definition}
\begin{lemma}[The First Variation Formula for
$F$-Yang-Mills functional $\mathcal{YM}_F$]\quad
Let $A \in A^1(Ad(P))$ and $\nabla ^t
=\nabla +t A $ be a family of connections on $Ad(P)$. Then
$$
\aligned \frac d{dt}\mathcal{YM}_F(\nabla ^t)|_{t=0} &=\int_M\langle\delta ^\nabla
\big(F^{\prime }(\frac 12||R
^\nabla ||^2)R ^\nabla \big), A \rangle \, dv
\endaligned$$
Furthermore, The Euler-Lagrangian equation for $\mathcal{YM}_F$ is
\begin{equation}
F^{\prime }(\frac 12||R ^\nabla ||^2)\delta ^\nabla R
^\nabla -i_{\text{grad} \big(F^{\prime }(\frac 12||R^\nabla ||^2)\big)}R
^\nabla =0  \tag{3.3}
\end{equation}
or \[
\delta ^\nabla  \big(F^{\prime }(\frac 12||R ^\nabla
||^2)R^\nabla  \big) = 0\]
\end{lemma}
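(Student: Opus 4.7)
The plan is to follow the standard variational calculation for a Yang-Mills-type functional, with the only new ingredient being the scalar weight $F'$. First I would compute the infinitesimal variation of the curvature: starting from $R^{\nabla^t}_{X,Y}=[\nabla^t_X,\nabla^t_Y]-\nabla^t_{[X,Y]}$ with $\nabla^t=\nabla+tA$, a bilinear expansion in $t$ yields $R^{\nabla^t}=R^\nabla+t\,d^\nabla A+O(t^2)$, where the linear-in-$t$ piece reassembles by (2.2) into the exterior covariant derivative $d^\nabla A$ of $A\in A^1(Ad(P))$ relative to $\nabla$. Consequently
$$\frac{d}{dt}\Big(\tfrac{1}{2}\|R^{\nabla^t}\|^2\Big)\Big|_{t=0}=\langle R^\nabla,\,d^\nabla A\rangle.$$

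Second, I would differentiate (3.2) under the integral sign using the chain rule, obtaining
$$\frac{d}{dt}\mathcal{YM}_F(\nabla^t)\Big|_{t=0}=\int_M F'\!\big(\tfrac{1}{2}\|R^\nabla\|^2\big)\,\langle R^\nabla,\,d^\nabla A\rangle\,dv.$$
Because $A$ is compactly supported, there are no boundary terms, and the adjoint relation between $d^\nabla$ and $\delta^\nabla$ (recorded above (2.4)) converts this to
$$\int_M\big\langle \delta^\nabla\!\big(F'(\tfrac{1}{2}\|R^\nabla\|^2)R^\nabla\big),\,A\big\rangle\,dv,$$
which is the first variation formula. Since the result holds for arbitrary compactly supported $A$, the fundamental lemma of the calculus of variations yields the second form of the Euler-Lagrange equation, $\delta^\nabla(F'(\tfrac{1}{2}\|R^\nabla\|^2)R^\nabla)=0$.

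To obtain the equivalent form (3.3), I would expand the codifferential via the scalar-function Leibniz rule $\delta^\nabla(fT)=f\,\delta^\nabla T-i_{\text{grad}(f)}T$, valid for any smooth function $f$ and vector-bundle-valued $p$-form $T$; this follows at once from (2.4) and the derivation property of $\nabla$ acting on the tensor product. Taking $f=F'(\tfrac{1}{2}\|R^\nabla\|^2)$ and $T=R^\nabla$ gives exactly (3.3).

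The only step demanding genuine care is the identification $\frac{d}{dt}R^{\nabla^t}|_{t=0}=d^\nabla A$: one must verify, using the formula for the curvature of a perturbed connection on $Ad(P)$ and the induced bracket action, that the linear-in-$t$ cross terms reassemble exactly into the alternating sum defining $d^\nabla A$, and that the purely quadratic term $\tfrac{1}{2}t^2[A\wedge A]$ contributes nothing at $t=0$. The only sign to watch afterwards is the minus in the Leibniz rule for $\delta^\nabla$, which is forced by the sign in (2.4); everything else is routine integration by parts.
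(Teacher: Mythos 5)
Your proposal is correct and follows essentially the same route as the paper: expand $R^{\nabla^t}=R^\nabla+t\,d^\nabla A+O(t^2)$, differentiate under the integral, pass to $\delta^\nabla$ by adjointness, and then expand $\delta^\nabla\big(F'(\tfrac12\|R^\nabla\|^2)R^\nabla\big)$ via the Leibniz rule coming from (2.4) to obtain (3.3). The one point you flag as needing care — that the linear-in-$t$ terms reassemble into $d^\nabla A$ — is exactly the computation the paper carries out with $[X,Y]=0$.
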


\begin{proof} By assumption, the curvature of $\nabla^t$ is given by
$$
 R ^{\nabla^t} =  R ^\nabla + t (d^\nabla A) + t^2 [A, A]
$$
where $[A, A]\in A^2(Ad(P))$ is given by $[A, A]_{X,Y}=[A_X, A_Y]\, .$ Indeed, for any local vector fields $X,Y$ on $M$. with $[X,Y]=0\, ,$ we have
$$
\begin{aligned}
 R ^{\nabla^t}_{X,Y} &=(\nabla _X +t A_X)(\nabla_Y + t A_Y)-(\nabla _Y + t A_Y)(\nabla _X + t A_X)\\
&= R ^\nabla_{X,Y} + t [\nabla _X , A_Y ] - t [\nabla _Y , A_X ] + t^2 [A_X, A_Y]\\
&=  R ^\nabla_{X,Y} + t \nabla _X (A_Y) - t \nabla _Y (A_X) + t^2 [A, A]_{X,Y}\\
&=  R ^\nabla_{X,Y} + t (d^\nabla A)_{X,Y} + t^2 [A, A]_{X,Y}\\
\end{aligned}
$$
Thus
$$
\aligned F(\frac 12|| R^{\nabla^t} ||^2)=F(\frac 12||R ^\nabla
||^2 +t \langle R ^\nabla ,d^\nabla A \rangle + \varepsilon(t^2))
\endaligned
$$
where $\varepsilon (t^2) = o (t^2)\quad \text{as}\, t \to 0\, . $
Therefore
$$
\mathcal{YM}_F(\nabla ^t)=\int _M F(\frac 12||R ^\nabla
||^2+t\langle R ^\nabla ,d^\nabla A \rangle+\varepsilon(t^2) )\,  dv
$$
and
$$
\aligned \frac d{dt}\mathcal{YM}_F(\nabla ^t)|_{t=0} &=\int_MF^{\prime
}(\frac
12||R ^\nabla ||^2)\langle R^\nabla ,d^\nabla A \rangle \, dv\\
&=\int_M\langle\delta ^\nabla \big(F^{\prime }(\frac 12||R
^\nabla ||^2)R ^\nabla \big), A \rangle \, dv
\endaligned
$$
This derives the Euler-Lagrange equation for $\mathcal{YM}_F$ as follows
$$
\aligned 0 &=\delta ^\nabla \big(F^{\prime }(\frac 12||R ^\nabla
||^2)R^\nabla \big) \\
&= - \sum_{i=1}^m (\nabla _{e_i}F^{\prime }(\frac 12||R ^\nabla ||^2)R
^\nabla)(e_i,\cdot ) \\
&=F^{\prime }(\frac 12||R ^\nabla ||^2)\delta ^\nabla R
^\nabla -i_{\text{grad}(F^{\prime }(\frac 12||R^\nabla ||^2))}R
^\nabla
\endaligned
$$
\end{proof}

\begin{example}
The Euler-Lagrangian equation for $p$-Yang-Mills functional $\mathcal{YM}_p$ is
\begin{equation}
\delta ^\nabla ( ||R
^\nabla ||^{p-2}R ^\nabla ) =0\tag{3.4}
\end{equation}
\end{example}
 \[\qquad \text{or}\qquad ||R
^\nabla ||^{p-2}\delta ^\nabla R
^\nabla - i_ {\text{grad} (||R
^\nabla ||^{p-2})}R
^\nabla =0
\]
\smallskip

\begin{theorem} Every $F-$Yang-Mills
field $R ^\nabla $ satisfies an $F$-conservation law.
\end{theorem}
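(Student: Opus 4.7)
The plan is to apply Lemma 2.3 with $\omega = R^\nabla$ and then kill the three resulting terms using (a) the second Bianchi identity and (b) the Euler-Lagrange equation (3.3) for $\mathcal{YM}_F$.

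First, I would substitute $\omega = R^\nabla \in A^2(Ad(P))$ directly into the formula of Lemma 2.3, obtaining
\[
(\operatorname{div} S_{F,R^\nabla})(X) = F'(\tfrac12\|R^\nabla\|^2)\langle \delta^\nabla R^\nabla, i_X R^\nabla\rangle + F'(\tfrac12\|R^\nabla\|^2)\langle i_X d^\nabla R^\nabla, R^\nabla\rangle - \langle i_{\text{grad}(F'(\frac12\|R^\nabla\|^2))} R^\nabla, i_X R^\nabla\rangle.
\]
The second term vanishes identically because the curvature of a connection satisfies the second Bianchi identity $d^\nabla R^\nabla = 0$; this is purely a connection-theoretic fact and requires no variational input.

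Next I would use the Euler-Lagrange equation (3.3), which since $R^\nabla$ is an $F$-Yang-Mills field gives
\[
F'(\tfrac12\|R^\nabla\|^2)\,\delta^\nabla R^\nabla = i_{\text{grad}(F'(\frac12\|R^\nabla\|^2))} R^\nabla.
\]
Pairing both sides with $i_X R^\nabla$ via $\langle\,\cdot\,,\,\cdot\,\rangle$ shows that the first and third terms in the displayed expression for $(\operatorname{div} S_{F,R^\nabla})(X)$ cancel exactly. Since $X$ is arbitrary, this yields $\operatorname{div} S_{F,R^\nabla} \equiv 0$, i.e.\ $R^\nabla$ satisfies an $F$-conservation law in the sense of Definition 2.1.

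There is essentially no obstacle: the proof is a direct assembly of Lemma 2.3 with two standard facts (Bianchi and the Euler-Lagrange equation). The only minor care needed is the algebraic verification that the scalar $F'(\frac12\|R^\nabla\|^2)$ can be pulled through the inner product so that the Euler-Lagrange identity is applied in the correct form on both sides of the pairing against $i_X R^\nabla$; this is immediate from bilinearity of $\langle\,\cdot\,,\,\cdot\,\rangle$ over smooth functions. No growth, completeness, or regularity hypothesis on $M$ or $P$ enters, so the statement holds as a pointwise identity wherever $R^\nabla$ is defined.
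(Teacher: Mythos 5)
Your proof is correct and follows exactly the paper's own route: substitute $\omega = R^\nabla$ into Lemma 2.3, use the Bianchi identity $d^\nabla R^\nabla = 0$ to kill the middle term, and use the Euler--Lagrange equation (3.3) from Lemma 3.1 to cancel the remaining two terms. You have merely written out the details that the paper compresses into ``by Lemma 2.3, Lemma 3.1 and (3.5), we immediately derive the desired $\operatorname{div} S_{F,R^\nabla} = 0$.''
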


\begin{proof} It is known that $R ^\nabla $ satisfies the
Bianchi identity
\begin{equation}
d^\nabla R ^\nabla =0  \tag{3.5}
\end{equation}
Therefore, by Lemma 2.3, Lemma 3.1 and (3.5), we immediately
derive the desired
$$
\operatorname{div} S_{F,R ^\nabla }=0
$$
\end{proof}
\begin{definition}
$\omega \in
A^k(\xi )$ $(k\geq 1)$ is said to satisfy a
\emph {$p-$conservation law} $($$p\geq 2$$)$ if $S_{F,\omega }$ is divergence free for $F(t)=\frac 1p(2t)^{\frac p2}\, ,$ i.e. for any vector field
$X$ on $M$, we have
\begin{equation}
|\omega
|^{p-2}\langle\delta ^\nabla \omega ,i_X\omega \rangle+|\omega
|^{p-2}\langle i_Xd^\nabla \omega ,\omega \rangle
-\langle i_{\text{grad}(|\omega
|^{p-2})}\omega
,i_X\omega \rangle = 0 \tag{3.6}\end{equation}

\end{definition}
As an immediate consequence, one has

\begin{corollary}
Every $p-$Yang-Mills
field $R ^\nabla $ satisfies a $p$-conservation law.
\end{corollary}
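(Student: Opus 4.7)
The plan is to exhibit Corollary 3.1 as a direct specialization of Theorem 3.1. The key observation is that the $p$-Yang-Mills functional is exactly $\mathcal{YM}_F$ for the distinguished choice $F(t)=\tfrac{1}{p}(2t)^{p/2}$. Indeed, with this $F$ one has $F\bigl(\tfrac{1}{2}\|R^\nabla\|^2\bigr)=\tfrac{1}{p}\|R^\nabla\|^p$, so $\mathcal{YM}_F(\nabla)=\mathcal{YM}_p(\nabla)$. Consequently a $p$-Yang-Mills connection (critical point of $\mathcal{YM}_p$) is an $F$-Yang-Mills connection for this $F$, and Theorem 3.1 immediately gives $\operatorname{div} S_{F,R^\nabla}=0$.

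It remains to verify that this conclusion coincides with the $p$-conservation law (3.6) in Definition 3.3. For this I would compute $F'$ for the chosen $F$: differentiating $F(t)=\tfrac{1}{p}(2t)^{p/2}$ yields $F'(t)=(2t)^{(p-2)/2}$, hence $F'\bigl(\tfrac{1}{2}\|\omega\|^2\bigr)=\|\omega\|^{p-2}$. Substituting into the divergence formula supplied by Lemma 2.3, namely
\[
(\operatorname{div} S_{F,\omega})(X)=F'\bigl(\tfrac{|\omega|^2}{2}\bigr)\langle\delta^\nabla\omega,i_X\omega\rangle+F'\bigl(\tfrac{|\omega|^2}{2}\bigr)\langle i_Xd^\nabla\omega,\omega\rangle-\langle i_{\operatorname{grad}(F'(|\omega|^2/2))}\omega,i_X\omega\rangle,
\]
I would read off exactly the left-hand side of (3.6) with $\omega=R^\nabla$. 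Thus $\operatorname{div} S_{F,R^\nabla}=0$ is literally the $p$-conservation law for $R^\nabla$.

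Since the argument is a straightforward specialization of Theorem 3.1, I do not anticipate a genuine obstacle; the only points requiring minimal care are the elementary derivative computation $F'(t)=(2t)^{(p-2)/2}$ and the bookkeeping that matches the output of Lemma 2.3 to the formal expression in Definition 3.3. The Bianchi identity $d^\nabla R^\nabla=0$, already used in the proof of Theorem 3.1, handles the $\langle i_Xd^\nabla\omega,\omega\rangle$ term automatically, and the Euler--Lagrange equation for $\mathcal{YM}_p$ (equivalently, equation (3.4)) handles the remaining two terms jointly. This completes the plan.
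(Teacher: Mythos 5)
Your proposal is correct and follows exactly the paper's (implicit) argument: the corollary is stated as an immediate consequence of Theorem 3.1 specialized to $F(t)=\tfrac{1}{p}(2t)^{p/2}$, for which $\mathcal{YM}_F=\mathcal{YM}_p$ and the $F$-conservation law is, by Definition 3.3, precisely the $p$-conservation law. Your verification that $F'\bigl(\tfrac{|\omega|^2}{2}\bigr)=|\omega|^{p-2}$ and that Lemma 2.3 then reproduces (3.6) is the right bookkeeping, and the roles you assign to the Bianchi identity and the Euler--Lagrange equation (3.4) match the proof of Theorem 3.1.
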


The $F$-conservation law is crucial to our subsequent development. $F$-Yang-Mills fields in the cases  $F(t)=\sqrt{1+2t} -1\, $ and
$F(t)=1-\sqrt{1-2t}\, $ will be explored in section 8.
\section{Monotonicity Formulae}

In this section, we will establish monotonicity formulae on
Cartan-Hadamard manifolds or more generally on complete manifolds with a pole. We
recall a  Cartan-Hadamard manifold is a complete simply-connected Riemannian
manifold of nonpositive sectional curvature.
A {\it pole} is a point $x_0\in M$ such that the exponential map from the tangent space to $M$ at $x_0$ into $M$ is a diffeomorphism. By the
{\it radial curvature} $K $ of a manifold with a pole, we mean the
restriction of the sectional curvature function to all the planes which contain the unit vector $\partial (x)$ in $T_{x}M$ tangent to the unique geodesic joining $%
x_{0}$ to $x$ and pointing away from $x_{0}.$ Let the tensor $g - dr \bigotimes dr = 0$  on the radial direction $\partial $, and is just the
metric tensor $g$ on the orthogonal complement $\partial ^{\bot}$.
We'll use the following comparison theorems in Riemannian geometry:

\begin{lemma} $($cf. $[$$GW$$])$ Let $(M,g)$ be a complete
Riemannian manifold with a pole $x_0$. Denote by $K_r$ the radial
curvature $K_r$ of $M$.

$($i$)$  If $-\alpha ^2\leq K_r\leq -\beta ^2$ with $\alpha >0$, $\beta
>0$, then
$$
\beta \coth (\beta r)[g-dr\otimes dr]\leq Hess(r)\leq \alpha \coth
(\alpha r)[g-dr\otimes dr]
$$

$($ii$)$  If $K_r = 0$, then
$$
\frac 1r[g-dr\otimes dr] = Hess(r)
$$

$($iii$)$  If $-\frac A{(1+r^2)^{1+\epsilon}}\leq K_r\leq \frac B{(1+r^2)^{1+\epsilon}}$ with $\epsilon > 0\, , A \ge 0\, ,$ and $0 \le B < 2\epsilon\, ,$
then

$$
\frac{1-\frac B{2\epsilon}}{r}[g-dr\otimes dr]\leq Hess(r)\leq \frac{e^{\frac {A}{2\epsilon}}}{r}[g-dr\otimes dr]
$$

$($iv$)$  If $-Ar^{2q}\leq K_r\leq -Br^{2q}$ with $A\geq B>0$ and
$q>0$, then
$$
B_0r^q[g-dr\otimes dr]\leq Hess(r)\leq (\sqrt{A}\coth \sqrt{A}%
)r^q[g-dr\otimes dr]
$$

for $r\geq 1$, where $B_0=\min
\{1,-\frac{q+1}2+(B+(\frac{q+1}2)^2)^{1/2}\}$.
\end{lemma}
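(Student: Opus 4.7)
The plan is to reduce each of the four curvature hypotheses to a Sturm-type comparison for the Jacobi equation along radial geodesics from the pole, and then translate the pointwise bounds on scalar Jacobi factors into the asserted tensor inequalities for $Hess(r)$. Since $x_0$ is a pole, $r$ is smooth off $x_0$ with $|\nabla r| \equiv 1$, which forces $Hess(r)(\partial_r, \cdot) \equiv 0$; both sides of every inequality vanish on $\partial_r$, so it suffices to bound $Hess(r)(X, X)$ for unit vectors $X \perp \partial_r$. For such an $X$ at $\gamma(r)$ on a unit-speed radial geodesic $\gamma$ from $x_0$, let $E(t)$ be the parallel translate of $X$ back to $\gamma(0)$ and let $J(t)$ be the perpendicular Jacobi field with $J(0)=0$, $J'(0)=E(0)$. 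Writing $J(t)=f(t)E(t)$, the standard second-variation identity gives $Hess(r)(X,X) = f'(r)/f(r)$, where $f$ solves
\[
f''(t) + K(t)\, f(t) = 0, \qquad f(0) = 0,\ f'(0) = 1,
\]
and $K(t)$ is the radial sectional curvature along $\gamma$. The whole problem reduces to estimating $u := f'/f$, which satisfies the Riccati equation $u' + u^2 + K(t) = 0$ with $u(t) = 1/t + O(t)$ near $0$.

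For (ii), $K \equiv 0$ gives $f(t)=t$ and $u=1/t$ exactly. For (i), Sturm comparison against the constant-curvature models $\sinh(\alpha t)/\alpha$ and $\sinh(\beta t)/\beta$ sandwiches $f$ between them and yields the bounds $\beta\coth(\beta t) \leq u \leq \alpha\coth(\alpha t)$. The real work is in (iii) and (iv), where the model Jacobi equations no longer admit elementary closed-form solutions. For (iii), I would work with the Riccati form $u' + u^2 = -K(t)$, integrate against an appropriate factor of $t$, and invoke the identity $\int_0^\infty \frac{t\, dt}{(1+t^2)^{1+\epsilon}} = \frac{1}{2\epsilon}$; the hypothesis $B < 2\epsilon$ is exactly what is needed to keep the total upper-model curvature integral strictly below $1$, which both prevents the upper-model Jacobi field from developing a conjugate point and produces the sharp constants $1 - B/(2\epsilon)$ and $e^{A/(2\epsilon)}$. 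For (iv), the upper bound $u \leq \sqrt{A}\coth(\sqrt{A})\, t^q$ at $t \geq 1$ falls out of comparison with the solutions of $h'' = A t^{2q} h$ after rescaling, while the lower bound reduces to an ansatz analysis for the Riccati flow yielding a quadratic equation for the leading coefficient; its positive root is precisely $-\tfrac{q+1}{2} + \sqrt{B + (\tfrac{q+1}{2})^2}$, and taking the minimum with the near-origin comparison value $1$ produces $B_0$.

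Since the bounds on $f'/f$ are uniform in the choice of unit $X \perp \partial_r$ and of radial geodesic, they assemble immediately into the stated tensor inequalities on the projector $g - dr \otimes dr$. The main obstacle is case (iii): it is the only case in which the radial curvature is allowed to change sign, so there is no direct sandwich of $f$ between explicit closed-form solutions; instead one must run the Riccati comparison out to infinity and extract the precise constants from the integrability identity above, exploiting the strict inequality $B < 2\epsilon$ both to forbid conjugate points and to obtain the sharp numerical factors in the final estimate.
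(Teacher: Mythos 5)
Your route and the paper's are quite different in character: the paper does not prove (i), (ii), (iv) at all --- it simply cites Section 2 of Greene--Wu [GW] --- and for (iii) its entire argument is the computation $\int_0^\infty s\,A(1+s^2)^{-1-\epsilon}\,ds = A/(2\epsilon)$ and $\int_0^\infty s\,B(1+s^2)^{-1-\epsilon}\,ds = B/(2\epsilon) < 1$, followed by an appeal to the Greene--Wu quasi-isometry theorem [GW, p.~57], which delivers the constants $e^{A/(2\epsilon)}$ and $1-\frac{B}{2\epsilon}$ directly. You have correctly isolated exactly the integral identity and the role of the hypothesis $B<2\epsilon$ that the paper uses, so your plan for (iii) is faithful to the source even though you propose to re-derive the quasi-isometry estimate rather than quote it. What the citation buys the paper is brevity; what your self-contained ODE argument would buy is an actual proof, provided the sketched steps are filled in. For the upper bound in (iii) one needs, for the model $h''=A(1+t^2)^{-1-\epsilon}h$, the convexity inequality $h(s)\le s\,h'(s)$ fed into $h'(t)=1+\int_0^t A(1+s^2)^{-1-\epsilon}h(s)\,ds$ and then Gronwall, giving $h'\le e^{A/(2\epsilon)}$ and hence $h'/h\le e^{A/(2\epsilon)}/t$ since $h\ge t$; the lower bound follows from concavity of the positive-curvature model ($h\le t$ and $h'\ge 1-\frac{B}{2\epsilon}>0$, which is also what excludes conjugate points). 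These are short but they are the whole content of the case, and your proposal only gestures at them; similarly in (iv) you assert a quadratic whose root is $-\frac{q+1}{2}+\sqrt{B+(\frac{q+1}{2})^2}$ without exhibiting the barrier computation that actually produces the coefficient $q+1$ rather than $q$.

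There is also one genuine error in your setup. In variable curvature a Jacobi field $J$ with $J(0)=0$ is \emph{not} of the form $f(t)E(t)$ for a parallel field $E$ and a scalar $f$, and ``the radial sectional curvature along $\gamma$'' is not a function of $t$ alone (it depends on the $2$-plane through $\dot\gamma$), so the identity $Hess(r)(X,X)=f'(r)/f(r)$ with $f''+Kf=0$ does not hold as stated. The correct reduction is the two-sided Hessian comparison theorem: if $K_1(r)\le K_r\le K_2(r)$, then $\frac{h_2'}{h_2}\le Hess(r)(X,X)\le \frac{h_1'}{h_1}$ for unit $X\perp\partial_r$, where $h_i$ solves $h_i''+K_ih_i=0$, $h_i(0)=0$, $h_i'(0)=1$; this is valid for all $r$ because the pole rules out conjugate points in $M$ and the hypotheses rule them out in the models, and it is proved by index-form or matrix-Riccati comparison, not by the scalar ansatz $J=fE$. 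Your subsequent Sturm and Riccati estimates only ever use the bounding scalar equations, so the error is repairable by substituting this statement for your reduction; but as written the first step of your argument is false.
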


\begin{proof}$(i)\, ,$ $(ii)\, ,$ and $(iv)\, $ are treated in section 2 of [GW].

\noindent
$(iii)$ Since for every $\epsilon >0\, ,$ $$\frac {d }{ds} \big (-\frac 1{2\epsilon} (1+s^2)^{-\epsilon}\big )=  \frac s{(1 + s^2)^{1+\epsilon}}\, , $$
we have
$$\int _0^{\infty} s \frac A{(1 + s^2)^{1+\epsilon}}ds = \frac A{2\epsilon} < \infty \quad \operatorname{and}\quad \int _0^{\infty} s \frac B{(1 + s^2)^{1+\epsilon}}ds = \frac B{2\epsilon} < 1\, .$$
Now the assertion is an immediate consequence of Quasi-isometry Theorem due to Greene-Wu [GW, p.57] in which $1 \le \eta \le e^{\frac A{2\epsilon}}$ and $1-\frac B{2\epsilon} \le \mu \le 1\, .$\end{proof}
In analogous to [Ka], (in which $(iv)$ is employed) for a given function $F$, we introduce the following
\begin{definition}  The $F$-degree $d_F$ is defined to be

$$
d_F=\sup_{t\geq 0}\frac{tF^{\prime }(t)}{F(t)}
$$
\end{definition}
For the most part of this paper, $d_F$ is assumed to be finite,
unless otherwise stated.

\begin{lemma} Let $M$ be a complete manifold with a pole $x_0$.
Assume that there exist two positive functions $h_1(r)$ and
$h_2(r)$ such that
\begin{equation}
h_1(r)[g-dr\otimes dr]\leq Hess(r)\leq h_2(r)[g-dr\otimes dr]
\tag{4.1}
\end{equation}
on $M\backslash \{x_0\}$. If $h_2(r)$ satisfies
\begin{equation}
rh_2(r)\geq 1  \tag{4.2}
\end{equation}
Then
\begin{equation}
\langle S_{F,\omega },\nabla \theta _X \rangle\,\geq
\,\big(1+(m-1)rh_1(r)-2pd_F  r h_2(r)\big)F( \frac{|\omega |^2}2)  \tag{4.3}
\end{equation}
where $X=r \nabla  r$.
\end{lemma}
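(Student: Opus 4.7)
The plan is to compute $\langle S_{F,\omega}, \nabla \theta_X\rangle$ pointwise in a well-chosen orthonormal frame, use the Hessian comparison (4.1) together with the standard identity $\sum_{i=1}^m |i_{e_i}\omega|^2 = p|\omega|^2$ for any $\omega\in A^p(\xi)$, and finally invoke the definition of the $F$-degree $d_F$ to assemble (4.3).

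First I would observe that for $X = r\nabla r$, the definition (2.9) of $\nabla\theta_X$ together with the product rule gives
$$
\nabla\theta_X(Y,Z) = dr(Y)\, dr(Z) + r\, \operatorname{Hess}(r)(Y,Z).
$$
At $x\in M\setminus\{x_0\}$, I would choose an orthonormal frame $\{e_1,\ldots,e_m\}$ with $e_m = \nabla r$ such that $\{e_1,\ldots,e_{m-1}\}$ diagonalizes $\operatorname{Hess}(r)|_{(\nabla r)^\perp}$, with eigenvalues $\lambda_1,\ldots,\lambda_{m-1}$ satisfying $h_1(r)\le \lambda_i \le h_2(r)$ by (4.1). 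Since $\nabla_{\nabla r}\nabla r \equiv 0$ along geodesics from the pole, $\operatorname{Hess}(r)$ vanishes on the radial direction. Using (2.6) and (2.8), the pairing decomposes as
$$
\langle S_{F,\omega}, \nabla\theta_X\rangle = S_{F,\omega}(e_m,e_m) + r\sum_{i=1}^{m-1}\lambda_i\, S_{F,\omega}(e_i,e_i),
$$
with $S_{F,\omega}(e_i,e_i) = F(\tfrac{|\omega|^2}{2}) - F'(\tfrac{|\omega|^2}{2})\, |i_{e_i}\omega|^2$, splitting the expression into a ``metric'' piece and a ``stress'' piece.

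The metric piece is bounded below by $\bigl(1 + (m-1)\, r\, h_1(r)\bigr) F(\tfrac{|\omega|^2}{2})$ using $\lambda_i\ge h_1(r)$ and $F\ge 0$. The stress piece, carrying a minus sign, equals $-F'(\tfrac{|\omega|^2}{2})\bigl[|i_{e_m}\omega|^2 + r\sum_{i=1}^{m-1}\lambda_i |i_{e_i}\omega|^2\bigr]$. The key move is to absorb the isolated radial term $|i_{e_m}\omega|^2$ into the tangential sum: this is exactly where the assumption (4.2), $r h_2(r)\ge 1$, enters, since combined with $\lambda_i \le h_2(r)$ it lets me dominate the bracket by $r h_2(r)\sum_{i=1}^m |i_{e_i}\omega|^2$. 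The identity $\sum_i |i_{e_i}\omega|^2 = p|\omega|^2$ turns this into $p\, r\, h_2(r)\, |\omega|^2 F'(\tfrac{|\omega|^2}{2})$, and the inequality $\tfrac{|\omega|^2}{2} F'(\tfrac{|\omega|^2}{2}) \le d_F F(\tfrac{|\omega|^2}{2})$ coming from the definition of $d_F$ converts this to $2 p d_F r h_2(r) F(\tfrac{|\omega|^2}{2})$. Combining yields (4.3).

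I expect the main obstacle to be precisely this handling of the radial contribution, whose coefficient in $\nabla\theta_X$ is the constant $1$ rather than being controlled by $h_1$ or $h_2$. The hypothesis $r h_2(r)\ge 1$ is exactly what is needed to uniformly compare this term against the tangential eigenvalues when estimating the $F'$-piece from above, which also accounts for the appearance of that assumption in the statement. Everything else is direct computation plus bookkeeping with $d_F$ and the $i_{e_i}\omega$ identity.
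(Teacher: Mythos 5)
Your proposal is correct and follows essentially the same route as the paper: an orthonormal frame adapted to $\nabla r$, the Hessian comparison (4.1) applied to the tangential block, the identity $\sum_{i}|i_{e_i}\omega|^2=p|\omega|^2$, the bound $tF'(t)\le d_F F(t)$, and the use of $rh_2(r)\ge 1$ to absorb the radial term $|i_{\partial/\partial r}\omega|^2$ into the tangential sum (the paper equivalently isolates the nonnegative remainder $F'(\tfrac{|\omega|^2}{2})(rh_2(r)-1)|i_{\partial/\partial r}\omega|^2$ and discards it). Your diagonalization of $\operatorname{Hess}(r)$ on $(\nabla r)^{\perp}$ is a harmless cosmetic variant of the paper's direct estimate of the double sum.
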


\begin{proof}Choosing an orthonormal frame $\{e_i,\frac
\partial
{\partial r}\}_{i=1,...,m-1}$ around $x\in M\backslash \{x_0\}$.
Take $ X=r\nabla r$. Then
\begin{equation}
\nabla _{\frac \partial {\partial r}}X=\frac \partial {\partial r}
\tag{4.4}
\end{equation}
\begin{equation}
\nabla _{e_i}X=r\nabla _{e_i}\frac \partial {\partial
r}=rHess(r)(e_i,e_j)e_j \tag{4.5}
\end{equation}
Using (2.6), (2.9), (4.4) and (4.5), we have
\begin{equation}
\aligned
\langle S_{F,\omega },\nabla \theta _X \rangle&=F(\frac{|\omega |^2}2)(1+%
\sum_{i=1}^{m-1}rHess(r)(e_i,e_i)) \\
&\qquad -\sum_{i,j=1}^{m-1}F^{\prime }(\frac{|\omega |^2}2)(\omega \odot
\omega
)(e_i,e_j)rHess(r)(e_i,e_j) \\
&\qquad -F^{\prime }(\frac{|\omega |^2}2)(\omega \odot \omega )(\frac
\partial {\partial r},\frac \partial {\partial r})
\endaligned \tag{4.6}
\end{equation}
By (4.1), we get
\begin{equation}
\aligned \langle S_{F,\omega },\nabla \theta _X \rangle &\geq F(\frac{|\omega
|^2}2)\big(1+(m-1)rh_1(r)\big)
\\
&\qquad -F^{\prime }(\frac{|\omega |^2}2)\sum_{i=1}^{m-1}(\omega \odot
\omega)(e_i,e_i)rh_2(r) \\
&\qquad -F^{\prime }(\frac{|\omega |^2}2)(\omega \odot \omega )(\frac
\partial{\partial r},\frac \partial {\partial r}) \\
&\geq F(\frac{|\omega |^2}2)\big(1+(m-1)rh_1(r)-2pd_Frh_2(r)\big) \\
&\qquad +F^{\prime }(\frac{|\omega |^2}2)(rh_2(r)-1)\langle i_{\frac\partial{\partial r}}\omega, i_{\frac\partial{\partial r}}\omega \rangle
\endaligned \tag{4.7}
\end{equation}
The last step follows from the fact that
$$
\aligned & \sum_{i=1}^{m-1}(\omega \odot
\omega)(e_i,e_i)+(\omega \odot \omega )(\frac
\partial{\partial r},\frac \partial {\partial r})\\ &\quad = \sum_{1\le j_1<\cdots
<j_{p-1}\le m}\sum_{i=1}^{m}\< \omega (e_i,e_{j_1}, \cdots, e_{j_{p-1}}), \omega (e_i,e_{j_1}, \cdots, e_{j_{p-1}})\>
\\
&\quad \le p |\omega|^2\, ,
\endaligned
$$
where $e_m = \frac
\partial{\partial r}\, .$
Now the Lemma follows immediately from (4.2) and (4.7).
\end{proof}

\begin{theorem} Let $(M,g)$ be an $m-$dimensional complete
Riemannian manifold with a pole $ x_0$. Let $\xi :E\rightarrow M$
be a Riemannian vector bundle on $M$ and $ \omega \in A^p(\xi )$.
Assume that the radial curvature $K_r$ of $M$ satisfies one of the
following three conditions:

$($i$)$ $-\alpha ^2\leq K_r\leq -\beta ^2$ with $\alpha >0$, $\beta
>0$ and $ (m-1)\beta -2p\alpha d_F\geq 0$;

$($ii$)$ $K_r = 0$ with $m-2pd_F>0$;

$($iii$)$  $-\frac A{(1+r^2)^{1+\epsilon}}\leq K_r\leq \frac B{(1+r^2)^{1+\epsilon}}$ with $\epsilon > 0\, , A \ge 0\, ,$ $0 < B < 2\epsilon\, ,$ and

$\qquad m - (m-1)\frac B{2\epsilon} -2p e^{\frac {A}{2\epsilon}}d_F > 0$.

If $\omega $ satisfies an $F-$conservation law, then
\begin{equation}
\frac 1{\rho _1^\lambda }\int_{B_{\rho _1}(x_0)}F(\frac{|\omega
|^2}2) dv \leq \frac 1{\rho _2^\lambda }\int_{B_{\rho
_2}(x_0)}F(\frac{|\omega |^2}2) dv\tag{4.8}
\end{equation}
for any $0<\rho _1\leq \rho _2$, where
\begin{equation}
\lambda =\left\{
\begin{array}{cc}
m-2p\frac \alpha \beta d_F &\text {if } K_r  \text { satisfies $($i$)$}\\
m-2pd_F &\text {if } K_r \text{ satisfies $($ii$)$}\\
m - (m-1)\frac B{2\epsilon} -2p e^{\frac {A}{2\epsilon}}d_F &\text{if } K_r \text{
satisfies $($iii$)$}\, .
\end{array}
\right.  \tag{4.9}
\end{equation}
\end{theorem}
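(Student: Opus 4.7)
My plan is to specialize the divergence identity (2.11), which holds because $\omega$ satisfies the $F$-conservation law, to the geodesic ball $D = B_\rho(x_0)$ with the conformal radial vector field $X = r\,\nabla r$, and then combine the resulting boundary-vs-interior comparison with the coarea formula to produce a differential inequality for the quantity $I(\rho) := \int_{B_\rho} F(|\omega|^2/2)\,dv_g$.

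On $\partial B_\rho$ we have $\nu = \partial/\partial r$ and $X = \rho\nu$, so from the definition (2.6),
$$S_{F,\omega}(X,\nu) = \rho\bigl[F(|\omega|^2/2) - F'(|\omega|^2/2)|i_\nu\omega|^2\bigr] \leq \rho\,F(|\omega|^2/2).$$
For the interior, Lemma 4.1 supplies functions $h_1, h_2$ fulfilling (4.1) in each of the cases (i)--(iii); one checks the auxiliary hypothesis $rh_2(r) \geq 1$ of Lemma 4.2 directly (using $x\coth x \geq 1$ in case (i), trivially in case (ii), and $e^{A/(2\epsilon)} \geq 1$ in case (iii)), so Lemma 4.2 yields
$$\langle S_{F,\omega}, \nabla\theta_X\rangle \geq \bigl(1 + (m-1)rh_1(r) - 2pd_F\,rh_2(r)\bigr)F(|\omega|^2/2).$$
Substituting both estimates into (2.11) gives $\rho\int_{\partial B_\rho} F(|\omega|^2/2)\,ds_g \geq \int_{B_\rho}\bigl(1 + (m-1)rh_1 - 2pd_F rh_2\bigr)F(|\omega|^2/2)\,dv_g$.

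The crux is to bound the coefficient $1 + (m-1)rh_1 - 2pd_F rh_2$ from below by the constant $\lambda$ in each case. Case (ii) is immediate since $rh_1 = rh_2 = 1$. In case (iii) both $rh_1 = 1 - B/(2\epsilon)$ and $rh_2 = e^{A/(2\epsilon)}$ are already constants producing exactly $\lambda$. The subtle case is (i): with $h_1 = \beta\coth(\beta r)$ and $h_2 = \alpha\coth(\alpha r)$, the monotonicity of $\coth$ together with $\alpha \geq \beta$ (implicit in the curvature pinching $-\alpha^2 \leq -\beta^2$) yields the crucial ratio bound $h_2 \leq (\alpha/\beta) h_1$, hence
$$(m-1)rh_1 - 2pd_F rh_2 \geq \bigl((m-1) - 2pd_F\alpha/\beta\bigr)\,rh_1 \geq (m-1) - 2pd_F\alpha/\beta,$$
using $rh_1 \geq 1$ and the sign hypothesis $(m-1)\beta - 2p\alpha d_F \geq 0$ to absorb the factor $rh_1$; adding $1$ yields $\lambda$.

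Finally, writing $I'(\rho) = \int_{\partial B_\rho} F(|\omega|^2/2)\,ds_g$ by the coarea formula, I obtain $\rho I'(\rho) \geq \lambda I(\rho)$, equivalently $\frac{d}{d\rho}\bigl(I(\rho)/\rho^\lambda\bigr) \geq 0$ (the degenerate case where $I$ vanishes on an initial interval is handled separately, and yields (4.8) trivially). Integrating this logarithmic-derivative inequality from $\rho_1$ to $\rho_2$ gives (4.8). The principal technical obstacle, as noted, is securing the ratio bound $h_2/h_1 \leq \alpha/\beta$ in case (i) so as to reduce the hyperbolic-coth expressions to a single constant lower bound; once this is established, the other cases and the concluding ODE-type argument are essentially bookkeeping.
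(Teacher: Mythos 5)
Your proposal is correct and follows essentially the same route as the paper: apply (2.11) on $B_\rho(x_0)$ with $X=r\nabla r$, bound the boundary term by $\rho F(\tfrac{|\omega|^2}{2})$, use Lemma 4.1/4.2 to bound the interior integrand below by $\lambda F(\tfrac{|\omega|^2}{2})$ (with the same ratio bound $\coth(\alpha r)\le\coth(\beta r)$ together with $\beta r\coth(\beta r)\ge 1$ in case (i)), and integrate the resulting logarithmic-derivative inequality via the coarea formula. Your explicit treatment of the degenerate case where the energy vanishes on an initial interval is a minor point of extra care that the paper glosses over.
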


\begin{proof}Take a smooth vector field $X=r\nabla r$ on $M\, .$ If $K_r$ satisfies $($i$)$, then by Lemma 4.1 and the increasing function $\beta r \coth (\beta r)\to 1\, $ as $r \to 0\, ,$ (4.2) holds. Now Lemma 4.2 is applicable and by (4.3), we have on $B_\rho(x_0)\backslash\{x_0\}\, ,$ for every $\rho >0,$
$$
\aligned \langle S_{F,\omega },\nabla \theta _X \rangle\,& \geq
\,\big(1+(m-1)\beta r \coth (\beta r)-2pd_F  \alpha r \coth
(\alpha r)\big)F( \frac{|\omega |^2}2) \\ &=\,\big(1+ \beta r \coth (\beta r)(m-1-2pd_F  \frac{\alpha r \coth
(\alpha r)}{\beta r \coth (\beta r)})\big)F( \frac{|\omega |^2}2) \\
& > \,\big(1+ 1 \cdot (m-1-2pd_F  \cdot \frac{\alpha }{\beta } \cdot 1)\big)F( \frac{|\omega |^2}2) = \lambda F( \frac{|\omega |^2}2)\, ,
\endaligned
$$
provided that $m-1-2pd_F  \cdot \frac{\alpha }{\beta }  \ge 0\, ,$ since $\beta r \coth
(\beta r) > 1\, $ for $r > 0\, ,$  and $\frac{\coth
(\alpha r)}{\coth (\beta r)} < 1\, ,$ for $0 < \beta < \alpha\, ,$ and $\coth$ is a decreasing function. Similarly, from Lemma 4.1 and Lemma 4.2, the above inequality holds for the cases (ii), and (iii) on $B_\rho(x_0)\backslash\{x_0\}\, .$  Thus, by the continuity of $\langle S_{F,\omega },\nabla \theta _X \rangle\, $ and $ F( \frac{|\omega |^2}2)\, ,$ and (2.6), we
have for every $\rho >0,$
\begin{equation}
\aligned
&\langle S_{F,\omega },\nabla \theta _X\rangle\geq \lambda
F(\frac{|\omega |^2}2)\qquad \text{in} \quad B_\rho(x_0) \\
& \rho\, \, F(\frac{|\omega |^2}2)\geq S_{F,\omega }(X,\frac \partial {\partial r}) \qquad \text{on} \quad \partial B_\rho(x_0) \endaligned\tag{4.10}
\end{equation}
It follows from (2.11) and (4.10) that
\begin{equation}
\rho\int_{\partial B_\rho(x_0)}F(\frac{|\omega |^2}2) ds \geq \lambda
\int_{B_\rho(x_0)}F( \frac{|\omega |^2}2) dv \tag{4.11}
\end{equation}
Hence we get from (4.11) the following
\begin{equation}
\frac{\int_{\partial B_\rho(x_0)}F(\frac{|\omega |^2}2) ds}{\int_{B_\rho(x_0)}F(\frac{%
|\omega |^2}2) dv} \geq \frac \lambda \rho  \tag{4.12}
\end{equation}
The coarea formula implies that
$$
\frac d{d\rho}\int_{B_\rho(x_0)}F(\frac{|\omega |^2}2) dv =\int_{\partial B_\rho(x_0)}F(%
\frac{|\omega |^2}2) ds
$$
Thus we have
\begin{equation}
\frac{\frac d{d\rho}\int_{B_\rho(x_0)}F(\frac{|\omega
|^2}2) dv}{\int_{B_\rho(x_0)}F( \frac{|\omega |^2}2) dv}\geq \frac \lambda
\rho
\tag{4.13}
\end{equation}
for a.e. $\rho >0\, .$ By integration (4.13) over $[\rho _1,\rho _2]$, we have
$$
\ln \int_{B_{\rho _2}(x_0)}F(\frac{|\omega |^2}2) dv -\ln
\int_{B_{\rho _1}(x_0)}F(\frac{|\omega |^2}2) dv \geq \ln \rho
_2^\lambda -\ln \rho _1^\lambda $$
This proves (4.8).
\end{proof}

\begin{remark} (a) The Theorem is obviously trivial when
$\lambda \leq 0$. (b) A study of Laplacian
comparison  on Cartan-Hadmard manifolds with $Ric_M\leq -\beta ^2$ has been made in [Di].
By employing our techniques, as in the proofs of Lemma 4.2 and Theorem 4.1, some monotonicity formulas under  appropriate curvature
conditions, can be derived. (c) Whereas curvature assumptions $(i)$ to $(iii)$ cannot be exhaustive, our method is unified in the following sense:  Regardless how radial curvature varies, as long as we have Hessian comparison estimates $(4.1)$ with bounds satisfying $(4.2)\, ,$ and the factor $1+(m-1)rh_1(r)-2pd_F  r h_2(r) \ge  c > 0$ in $(4.3)$ for some constant $c$, and $\omega$ satisfies an $F-$ conservation law, then we obtain a monotonicity formula $(4.8)$ for $\mathcal{E}_{F,g}(\omega )-$energy, for an appropriate $\lambda > 0\, .$
\end{remark}

\begin{corollary} Suppose $M$ has constant sectional
curvature $-\alpha ^2$ $($$\alpha ^2\geq 0$$)$. Let $m- 1 - 2pd_F \ge 0$, if $\alpha \ne 0\, ,$ and  $m - 2pd_F > 0$ if $\alpha=0$. Let
$\omega \in A^p(\xi )$ be a $\xi -$valued $p-$form on $M^m$
satisfying an $F$-conservation law. Then
$$
\frac 1{\rho _1^{m-2pd_F}}\int_{B_{\rho _1}(x_0)}F(\frac{|\omega
|^2}2) \, dv \leq \frac 1{\rho _2^{m-2pd_F}} \int_{B_{\rho
_2}(x_0)}F(\frac{|\omega |^2}2)\,  dv
$$
for any $x_0\in M$ and $0<\rho _1\leq \rho _2$.
\end{corollary}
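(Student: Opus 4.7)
The plan is to deduce this corollary directly from Theorem 4.1 by making the sharpest possible choice of curvature bounds. When $\alpha > 0$, a space of constant sectional curvature $-\alpha^2$ satisfies hypothesis (i) of Theorem 4.1 with $\beta = \alpha$; the side condition $(m-1)\beta - 2p\alpha d_F \geq 0$ then reduces, upon dividing by $\alpha > 0$, to $m - 1 - 2pd_F \geq 0$, which is precisely what the corollary assumes. When $\alpha = 0$, the space is Euclidean, the radial curvature vanishes identically, hypothesis (ii) applies verbatim, and the required side condition $m - 2pd_F > 0$ again matches the corollary's hypothesis.

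Next, I would read off the exponent $\lambda$ from (4.9). In case (i) with $\beta = \alpha$ one obtains $\lambda = m - 2p(\alpha/\beta)d_F = m - 2pd_F$, and in case (ii) one has $\lambda = m - 2pd_F$ directly. Thus in both cases the exponent collapses to the single value $m - 2pd_F$, and substituting this into the monotonicity inequality (4.8) produces exactly the stated inequality.

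The only auxiliary remark needed is that a simply connected complete Riemannian manifold of constant sectional curvature $-\alpha^2$ (hyperbolic $m$-space when $\alpha > 0$, Euclidean $\mathbb{R}^m$ when $\alpha = 0$) has a pole at every point, so the freedom to choose $x_0 \in M$ arbitrarily is genuine. Since $\omega$ is already assumed to satisfy an $F$-conservation law, no further hypothesis of Theorem 4.1 needs to be verified. There is no real obstacle in this corollary; its content lies in the pleasant cancellation $\alpha/\beta = 1$, which sharpens the generic exponent of case (i) to the clean value $m - 2pd_F$ that also matches case (ii) continuously as $\alpha \to 0$.
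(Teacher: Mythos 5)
Your proposal is correct and follows essentially the same route as the paper: the paper's own proof simply sets $\beta=\alpha\ne 0$ in case (i) or uses case (ii) when $\alpha=0$ and reads off $\lambda=m-2pd_F$ from (4.8)--(4.9). Your additional observation that a complete simply connected manifold of constant nonpositive curvature has a pole at every point (so $x_0$ may be chosen arbitrarily) is a detail the paper leaves implicit, but it does not change the argument.
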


\begin{proof} In Theorem 4.1, if we take $\alpha =\beta \ne 0$ for the
case (i) or $a=0$ for the case (ii), this corollary follows from
(4.8) immediately. \end{proof}

\begin{remark} When $F(t)=t$ and $\omega $ is the
differential of a harmonic map or the curvature form of a
Yang-Mills connection, then we recover the well-known monotonicity
formulae for the harmonic map or Yang-Mills field (cf. [PS]).
\end{remark}

\begin{proposition} Let $(M,g)$ be an $m-$dimensional
complete Riemannian manifold whose radial curvature satisfies

$($iv$)$ $-Ar^{2q}\leq K_r\leq -Br^{2q}$ with $A\geq B>0$ and $q>0$.

Let $\omega \in A^p(\xi )$ satisfy an $F-$conservation law, and $\delta
:=(m-1)B_0-2pd_F\sqrt{A}\coth \sqrt{A} \geq 0$, where $B_0$ is
given in Lemma 4.1. Suppose $(4.15)$ holds. Then
\begin{equation}
\frac 1{\rho _1^{1 + \delta}}\int_{B_{\rho
_1}(x_0)-B_1(x_0)}F(\frac{|\omega |^2}2) \, dv \leq \frac 1{\rho
_2^{1 + \delta}}\int_{B_{\rho
_2}(x_0)-B_1(x_0)}F(\frac{|\omega |^2}2) \, dv\tag{4.14}
\end{equation}
for any $1\leq \rho _1\leq \rho _2$.
\end{proposition}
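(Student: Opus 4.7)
The strategy parallels the proof of Theorem 4.1, but it is carried out on the annular domain $D_\rho := B_\rho(x_0)\setminus \overline{B_1(x_0)}$ rather than on a ball, because the Hessian comparison $(iv)$ in Lemma 4.1 is only valid for $r\ge 1$. I take the radial vector field $X=r\nabla r$ and apply Lemma 4.1(iv) with $h_1(r)=B_0\, r^q$ and $h_2(r)=\sqrt{A}\coth\sqrt{A}\cdot r^q$. Since $t\coth t\ge 1$ for every $t>0$, I have $\sqrt{A}\coth\sqrt{A}\ge 1$, and therefore $rh_2(r)=\sqrt{A}\coth\sqrt{A}\cdot r^{q+1}\ge 1$ on $\{r\ge 1\}$. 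Thus assumption (4.2) of Lemma 4.2 is satisfied, and the Lemma yields, pointwise on $D_\rho$,
$$
\langle S_{F,\omega},\nabla\theta_X\rangle \;\ge\; \bigl(1+(m-1)B_0 r^{q+1}-2pd_F\sqrt{A}\coth\sqrt{A}\cdot r^{q+1}\bigr)F(|\omega|^2/2)\;=\;(1+r^{q+1}\delta)F(|\omega|^2/2).
$$
Using $r^{q+1}\ge 1$ (since $r\ge 1$ and $q>0$) together with $\delta\ge 0$, this simplifies to the clean lower bound $\langle S_{F,\omega},\nabla\theta_X\rangle\ge (1+\delta)F(|\omega|^2/2)$ on $D_\rho$.

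Next, since $\omega$ satisfies an $F$-conservation law, I apply identity (2.11) to $D_\rho$, whose $C^1$ boundary consists of $\partial B_\rho$ (outward normal $+\partial/\partial r$) and $\partial B_1$ (outward normal $-\partial/\partial r$). From (2.6) and the nonnegativity of $\langle i_{\partial/\partial r}\omega,i_{\partial/\partial r}\omega\rangle$ I obtain the elementary estimate $S_{F,\omega}(X,\partial/\partial r)=r[F(|\omega|^2/2)-F'(|\omega|^2/2)|i_{\partial/\partial r}\omega|^2]\le rF(|\omega|^2/2)$ on $\partial B_\rho$. Setting $\phi(\rho):=\int_{D_\rho}F(|\omega|^2/2)\,dv$, (2.11) then reads
$$
\rho\int_{\partial B_\rho}F(|\omega|^2/2)\,ds\;-\;\int_{\partial B_1}S_{F,\omega}(X,\partial/\partial r)\,ds\;\ge\;(1+\delta)\,\phi(\rho).
$$
The assumption (4.15) is precisely what renders the $\partial B_1$ flux term nonnegative (or otherwise absorbable into the inequality), and this is the single nontrivial step: the stress-energy flux at $r=1$ is not of a definite sign in general, so one cannot simply discard it as one could with the pointwise $x_0$ in Theorem 4.1. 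Accepting (4.15), and using the coarea formula $\phi'(\rho)=\int_{\partial B_\rho}F(|\omega|^2/2)\,ds$, I arrive at the differential inequality
$$
\rho\,\phi'(\rho)\;\ge\;(1+\delta)\,\phi(\rho)\qquad\text{for a.e. }\rho\ge 1.
$$

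Finally, I integrate $\phi'(\rho)/\phi(\rho)\ge (1+\delta)/\rho$ over $[\rho_1,\rho_2]\subset[1,\infty)$, treating the degenerate case $\phi(\rho_1)=0$ (which forces the left-hand side of (4.14) to vanish) separately. Exponentiating gives $\phi(\rho_2)/\rho_2^{1+\delta}\ge \phi(\rho_1)/\rho_1^{1+\delta}$, which is precisely (4.14). The main obstacle throughout is the $\partial B_1$ boundary flux; the rest is a direct annular adaptation of the ball argument in Theorem 4.1, with the factor $r^{q+1}\ge 1$ explaining why the exponent is $1+\delta$ rather than merely $\delta$.
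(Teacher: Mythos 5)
Your proposal is correct and follows essentially the same route as the paper: the radial field $X=r\nabla r$ with Lemma 4.1(iv) and Lemma 4.2 on the annulus $B_\rho(x_0)\setminus B_1(x_0)$, the integral identity (2.11) with hypothesis (4.15) disposing of the inner boundary flux, and the coarea formula to integrate the resulting differential inequality. Your explicit treatment of the degenerate case $\phi(\rho_1)=0$ and the remark that $r^{q+1}\ge 1$ accounts for the exponent $1+\delta$ are minor refinements of the paper's argument, not departures from it.
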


\begin{proof} Take $X=r\nabla r$. Applying Lemma 4.1, (4.2), and (4.3), we have
$$\aligned \langle S_{F,\omega },\nabla \theta _X \rangle &\geq F(\frac{|\omega
|^2}2)\big(1+\delta r^{q+1}\big)
\endaligned
$$
and
$$\aligned & S_{F,\omega }(X,\frac \partial {\partial r}) = F(\frac{|\omega |^2}2) - F^{\prime }(\frac{|\omega |^2}2)\langle i_{\frac\partial{\partial r}}\omega, i_{\frac\partial{\partial r}}\omega \rangle\qquad \text{on} \quad \partial B_1(x_0) \\
&  S_{F,\omega }(X,\frac \partial {\partial r}) = \rho F(\frac{|\omega |^2}2) - \rho F^{\prime }(\frac{|\omega |^2}2)\langle i_{\frac\partial{\partial r}}\omega, i_{\frac\partial{\partial r}}\omega \rangle \qquad \text{on} \quad \partial B_\rho(x_0) \endaligned
$$

It follows from (2.11) that

\begin{equation*} \aligned & \rho\int_{\partial B_\rho(x_0)} F(\frac{|\omega |^2}2) - F^{\prime }(\frac{|\omega |^2}2)\langle i_{\frac\partial{\partial r}}\omega, i_{\frac\partial{\partial r}}\omega \rangle\,  ds - \int_{\partial B_1(x_0)} F(\frac{|\omega |^2}2) - F^{\prime }(\frac{|\omega |^2}2)\langle i_{\frac\partial{\partial r}}\omega, i_{\frac\partial{\partial r}}\omega \rangle\,  ds\\
& \ge  \int_{B_\rho(x_0)-B_1(x_0)} (1+\delta
r^{q+1}) F(\frac{|\omega |^2}2)\, .
\endaligned
\end{equation*}
Whence, if
\begin{equation}
\qquad \int_{\partial B_1(x_0)} F(\frac{|\omega |^2}2) - F^{\prime }(\frac{|\omega |^2}2)\langle i_{\frac\partial{\partial r}}\omega, i_{\frac\partial{\partial r}}\omega \rangle\,  ds \ge 0\, ,\tag{4.15}
\end{equation}
then
$$\rho\int_{\partial B_\rho(x_0)}F(\frac{|\omega |^2}2) \, ds \geq (1+\delta
)\int_{B_\rho(x_0)-B_1(x_0)}F(\frac{|\omega |^2}2) \, dv $$
for any $\rho > 1\, .$
Coarea formula then implies
\begin{equation}
\frac{d\int_{B_\rho(x_0)-B_1(x_0)}F(\frac{|\omega |^2}2) \, dv}{
\int_{B_\rho(x_0)-B_1(x_0)}F(\frac{|\omega |^2}2) \, dv}\geq \frac{1+\delta} \rho d\rho
\tag{4.16}
\end{equation}
for a.e. $\rho\geq 1$. Integrating (4.16) over $[\rho _1,\rho _2]$, we
get
\[\aligned &\ln \big(\int_{B_{\rho _2}(x_0)-B_1(x_0)}F(\frac{|\omega
|^2}2)\, dv \big)-\ln
\big( \int_{B_{\rho _1}(x_0)-B_1(x_0)}F(\frac{|\omega |^2}2)\, dv \big)  \\
& \quad \geq  (1+\delta) \ln \rho _2- (1+\delta) \ln \rho _1 \endaligned
\]
Hence we prove the proposition.
\end{proof}

\begin{corollary} Let $K_r\, $ and $\delta$ be as in Proposition 4.1, and $\, \omega $ satisfy an $F-$conservation law. Suppose
\begin{equation}d_F\, \big |i_{\frac {\partial}{\partial r}}\omega\big |^2 \le \frac {|\omega|^2}{2} \tag{4.17}
\end{equation}
on $\partial B_1\, ,$ or $F(\frac{|\omega |^2}2) - F^{\prime }(\frac{|\omega |^2}2)|i_{\frac\partial{\partial r}}\omega|^2 \ge 0 $ on $\, \partial B_1\, .$
Then $(4.14)$ holds.
\end{corollary}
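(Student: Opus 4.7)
The plan is to reduce Corollary~4.2 to Proposition~4.1 by verifying its integrated hypothesis (4.15) under either of the pointwise assumptions of the corollary. Once (4.15) is established, the conclusion (4.14) follows immediately by invoking Proposition~4.1 verbatim, so the entire task is a pointwise inequality on $\partial B_1(x_0)$.

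First I would handle the easier alternative: if $F(\tfrac{|\omega|^2}{2}) - F'(\tfrac{|\omega|^2}{2})|i_{\partial/\partial r}\omega|^2 \ge 0$ holds pointwise on $\partial B_1(x_0)$, then integrating this inequality over $\partial B_1(x_0)$ against $ds$ gives (4.15) directly, and Proposition~4.1 applies.

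For the first alternative, $d_F|i_{\partial/\partial r}\omega|^2 \le \tfrac{|\omega|^2}{2}$ on $\partial B_1(x_0)$, I would invoke Definition~4.1 of the $F$-degree. By definition,
\[
t F'(t) \le d_F F(t) \qquad \text{for every } t \ge 0.
\]
Applying this with $t = |\omega|^2/2$ yields
\[
\tfrac{|\omega|^2}{2} F'\!\bigl(\tfrac{|\omega|^2}{2}\bigr) \le d_F\, F\!\bigl(\tfrac{|\omega|^2}{2}\bigr).
\]
Multiplying the hypothesis $d_F|i_{\partial/\partial r}\omega|^2 \le \tfrac{|\omega|^2}{2}$ by the nonnegative factor $F'(|\omega|^2/2)$ and combining with the previous display yields, pointwise on $\partial B_1(x_0)$,
\[
d_F F'\!\bigl(\tfrac{|\omega|^2}{2}\bigr)\,|i_{\partial/\partial r}\omega|^2 \le F'\!\bigl(\tfrac{|\omega|^2}{2}\bigr)\cdot\tfrac{|\omega|^2}{2} \le d_F\, F\!\bigl(\tfrac{|\omega|^2}{2}\bigr).
\]
Since $d_F > 0$ (otherwise $F' \equiv 0$ on $(0,\infty)$, contradicting the strict monotonicity of $F$), we may divide by $d_F$ to obtain
\[
F'\!\bigl(\tfrac{|\omega|^2}{2}\bigr)\,|i_{\partial/\partial r}\omega|^2 \le F\!\bigl(\tfrac{|\omega|^2}{2}\bigr)
\]
on $\partial B_1(x_0)$, which is precisely the second alternative. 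Integrating it over $\partial B_1(x_0)$ recovers (4.15).

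With (4.15) verified in both cases, Proposition~4.1 delivers (4.14) and the proof is complete. There is no genuine obstacle; the only point requiring care is the observation that $d_F > 0$, so that the division step in the first alternative is legitimate.
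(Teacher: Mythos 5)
Your proposal is correct and follows the same route as the paper: the paper's proof is a one-line assertion that (4.17) implies (4.15) so that Proposition 4.1 applies, and your argument simply supplies the intended details, namely using $tF'(t)\le d_F F(t)$ from Definition 4.1 to pass from the pointwise hypothesis to the pointwise form of the integrand in (4.15). The observation that $d_F>0$ (so the division is legitimate) is a worthwhile point of care that the paper leaves implicit.
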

\begin{proof}
The assumption (4.17) implies that $(4.15)$ holds, and the assertion follows from Proposition 4.1.
\end{proof}

\section{Vanishing Theorems and Liouville Type Results}
In this section we list some results in the following three subsections, that are immediate applications of the monotonicity formulae in the last section.

\subsection{Vanishing theorems for vector bundle valued $p$-forms}

\begin{theorem} Suppose the radial curvature $K_r$ of $M$ satisfies the condition
in Theorem 4.1. If $\omega \in A^p(\xi )$ satisfies an
$F-$conservation law and
\begin{equation}
\int_{B_\rho(x_0)}F(\frac{|\omega |^2}2)\, dv = o(\rho^\lambda )\quad \text{as
} \rho\rightarrow \infty\tag{5.1}
\end{equation}
where $\lambda $ is given by (4.9), then $F(\frac{|\omega |^2}2)\equiv 0\, ,$ and hence $\omega \equiv 0$. In
particular, if $\omega $ has finite $\mathcal{E}_{F,g}-$energy, then
$\omega \equiv 0$.
\end{theorem}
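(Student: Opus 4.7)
The plan is to apply the monotonicity formula (4.8) from Theorem 4.1 directly, with the inner ball radius held fixed and the outer one sent to infinity. Since $\omega$ satisfies an $F$-conservation law and the radial curvature $K_r$ falls into one of the three cases (i)--(iii), Theorem 4.1 gives, for every $0 < \rho_1 \le \rho_2$,
\[
\frac{1}{\rho_1^\lambda}\int_{B_{\rho_1}(x_0)} F\!\left(\tfrac{|\omega|^2}{2}\right) dv \;\le\; \frac{1}{\rho_2^\lambda}\int_{B_{\rho_2}(x_0)} F\!\left(\tfrac{|\omega|^2}{2}\right) dv,
\]
with $\lambda$ as in (4.9). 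Fix $\rho_1 > 0$ and let $\rho_2 \to \infty$. The growth hypothesis (5.1) asserts precisely that the right-hand side tends to $0$. Hence $\int_{B_{\rho_1}(x_0)} F(|\omega|^2/2)\, dv = 0$.

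Since $F \ge 0$ and the integral over every ball centered at $x_0$ vanishes, $F(|\omega|^2/2) \equiv 0$ on $M$. The assumption that $F:[0,\infty)\to[0,\infty)$ is strictly increasing with $F(0)=0$ forces $F(t) > 0$ whenever $t > 0$. Therefore $|\omega|^2 \equiv 0$, i.e. $\omega \equiv 0$, which is the first claim.

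For the finite-energy case, note that $\mathcal{E}_{F,g}(\omega) < \infty$ immediately implies
\[
\int_{B_\rho(x_0)} F\!\left(\tfrac{|\omega|^2}{2}\right) dv \;\le\; \mathcal{E}_{F,g}(\omega) \;=\; O(1),
\]
and since $\lambda > 0$ (which is the only nontrivial situation, cf.\ Remark 4.1(a)), $O(1) = o(\rho^\lambda)$ as $\rho \to \infty$. So (5.1) is automatic and the previous argument applies.

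Frankly, there is no substantive obstacle once Theorem 4.1 is in hand; the entire content of this theorem is the observation that the exponent $\lambda$ furnished by the monotonicity formula matches the growth rate tested by (5.1). The only fine point worth checking is that the strict monotonicity and the normalization $F(0)=0$ of $F$ are genuinely needed to pass from $F(|\omega|^2/2) \equiv 0$ to $\omega \equiv 0$; without strict monotonicity one could only conclude that $|\omega|^2$ lies in the zero set of $F$.
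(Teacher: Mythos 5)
Your proof is correct and is exactly the argument the paper intends: the paper states Theorem 5.1 without proof as an ``immediate application'' of the monotonicity formula (4.8), and your route --- fix $\rho_1$, send $\rho_2\to\infty$ so that (5.1) kills the right-hand side, then use $F\ge 0$, $F(0)=0$, and strict monotonicity to conclude $\omega\equiv 0$ --- is that application, with the finite-energy case handled correctly since $\lambda>0$ under each of the curvature hypotheses (i)--(iii).
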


\begin{definition}
$\omega \in A^p(\xi)$ is said to have \emph{slowly divergent $\mathcal{E}_{F,g}-$energy}, if there exists a positive continuous function $\psi (r)$ such that
\begin{equation}
\int_{\rho_1}^\infty \frac{dr}{r\psi (r)}=+\infty\tag{5.2}
\end{equation}
for some $\rho_1>0\, ,$ and
\begin{equation}
\lim_{\rho\rightarrow \infty }\int_{B_\rho(x_0)}\frac{F(\frac{|\omega
|^2}2)}{\psi (r(x))} \, dv <\infty\tag{5.3}
\end{equation}
\end{definition}

\begin{remark} (1) Hesheng Hu introduced the notion of slowly divergent energy (in which $F(t)=t\, ,$ $\omega = du\, ,$ or
$\omega =R^\nabla \, $), and made a pioneering study in [Hu1,2]. (2) In [LL2]
and [LSC], the authors established some Liouville results for
$F-$harmonic maps or forms with values in a vector bundle satisfying
an $F-$conservation law under the condition of slowly divergent
energy. Obviously Theorem 5.1 improves all these growth conditions, as its special cases of $F\, ,$
and expresses the growth condition more explicitly (cf. Theorem 10.1, Examples 10.1 and 10.2 in Appendix).
\end{remark}

\begin{theorem} Suppose $M$ and $\delta$ satisfy the condition
in Proposition 4.1. If $\omega \in A^p(\xi )$ satisfies an
$F-$conservation law, $(4.15)$ holds, and
\begin{equation}
\int_{B_\rho (x_0)}F(\frac{|\omega |^2}2) dv =o(\rho^{1 + \delta})\text{\quad as }\rho\rightarrow \infty\tag{5.4}
\end{equation}
then $\omega \equiv 0$ on $M-B_1(x_0)$. In particular, if $\omega
$ has finite $\mathcal{E}_{F,g}-$energy, then $\omega \equiv 0$ on
$M-B_1(x_0)$.
\end{theorem}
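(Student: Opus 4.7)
The plan is to deduce Theorem 5.2 directly from the monotonicity formula (4.14) established in Proposition 4.1. Under the hypotheses of the theorem, Proposition 4.1 applies and yields
\begin{equation*}
\frac{1}{\rho_1^{1+\delta}}\int_{B_{\rho_1}(x_0)-B_1(x_0)}F\bigl(\tfrac{|\omega|^2}{2}\bigr)\,dv
\leq \frac{1}{\rho_2^{1+\delta}}\int_{B_{\rho_2}(x_0)-B_1(x_0)}F\bigl(\tfrac{|\omega|^2}{2}\bigr)\,dv
\end{equation*}
for all $1 \leq \rho_1 \leq \rho_2$. The strategy is then to fix $\rho_1$ and let $\rho_2 \to \infty$, using the growth assumption (5.4) to force the right-hand side to vanish in the limit.

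More precisely, first I would observe that since $B_{\rho_2}(x_0) - B_1(x_0) \subset B_{\rho_2}(x_0)$ and the integrand $F(|\omega|^2/2)$ is nonnegative, we have
\begin{equation*}
0 \leq \frac{1}{\rho_2^{1+\delta}}\int_{B_{\rho_2}(x_0)-B_1(x_0)}F\bigl(\tfrac{|\omega|^2}{2}\bigr)\,dv
\leq \frac{1}{\rho_2^{1+\delta}}\int_{B_{\rho_2}(x_0)}F\bigl(\tfrac{|\omega|^2}{2}\bigr)\,dv.
\end{equation*}
By assumption (5.4), the right-most quantity tends to $0$ as $\rho_2 \to \infty$. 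Passing to the limit in the monotonicity inequality yields
\begin{equation*}
\int_{B_{\rho_1}(x_0)-B_1(x_0)}F\bigl(\tfrac{|\omega|^2}{2}\bigr)\,dv = 0
\end{equation*}
for every $\rho_1 \geq 1$. Letting $\rho_1 \to \infty$ and applying monotone convergence gives $F(|\omega|^2/2) \equiv 0$ on $M - B_1(x_0)$. Since $F$ is strictly increasing with $F(0) = 0$, this forces $|\omega|^2 \equiv 0$, i.e.\ $\omega \equiv 0$ on $M - B_1(x_0)$.

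For the "in particular" statement, if $\mathcal{E}_{F,g}(\omega) < \infty$, then $\int_{B_\rho(x_0)} F(|\omega|^2/2)\,dv$ is bounded above by a constant independent of $\rho$, so it is trivially $o(\rho^{1+\delta})$ because $1+\delta > 0$ (recall $\delta \geq 0$ by assumption in Proposition 4.1). Hence (5.4) is automatic and the previous argument applies.

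The main technical point is not really an obstacle here but rather a verification that all hypotheses for Proposition 4.1 are in force: the radial curvature condition (iv), the $F$-conservation law, $\delta \geq 0$, and the boundary condition (4.15). Once these are assumed, the proof is essentially a direct limiting argument on the monotonicity formula, parallel in spirit to Theorem 5.1 but with the annular region $B_\rho(x_0) - B_1(x_0)$ replacing the ball $B_\rho(x_0)$, which is why the vanishing conclusion is only on $M - B_1(x_0)$ rather than all of $M$.
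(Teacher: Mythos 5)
Your proof is correct and follows exactly the route the paper intends: Theorem 5.2 is a direct limiting consequence of the monotonicity formula $(4.14)$ of Proposition 4.1, obtained by fixing $\rho_1$, letting $\rho_2\to\infty$ under the growth hypothesis $(5.4)$, and then using that $F$ is strictly increasing with $F(0)=0$ to pass from the vanishing of the integral to $\omega\equiv 0$ on $M-B_1(x_0)$. Your verification of the ``in particular'' clause via $1+\delta>0$ is also the standard observation the paper relies on.
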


Notice that Theorem 5.2 only asserts that $\omega $ vanishes in
an open set of $M$. If $\omega $ posses the unique continuation
property, then $\omega $ vanishes on $M$ everywhere (cf. Corollaries 5.2 and 5.4).

\subsection{Liouville theorems for $F$-harmonic maps}

Let $u: M \to N$ be an $F-$harmonic map. Then its differential $du $ can be viewed as a $1$-form with values in the induced bundle $u^{-1}TN\, .$ Since
$\omega=du $ satisfies an $F-$conservation law, we obtain
the following Liouville-type

\begin{theorem} Let $N$ be a Riemannian manifold. Suppose the radial curvature $K_r$ of $M$ and  $\lambda $ satisfy the condition
in Theorem 4.1 in which $p=1\, .$  Then every $F-$harmonic map $u: M \to N$ with the following growth
condition is a constant.
\begin{equation}
\int_{B_\rho(x_0)}F(\frac{|du |^2}2)\, dv = o(\rho^\lambda )\quad \text{as
} \rho\rightarrow \infty\tag{5.5}
\end{equation}
 In particular, every $F-$harmonic map $u: M \to N$ with finite $F-$energy is a constant.
\end{theorem}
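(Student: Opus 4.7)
The plan is to reduce Theorem 5.3 to a direct application of Theorem 5.1 with $p=1$. Since $u : M \to N$ is an $F$-harmonic map, its differential $du$ is naturally a $1$-form with values in the pulled-back Riemannian vector bundle $\xi = u^{-1}TN \to M$, i.e. $du \in A^1(u^{-1}TN)$. Thus the hypothesis ``$\omega \in A^p(\xi)$ with $p=1$'' in Theorem 5.1 is met by taking $\omega = du$.

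The next step is to verify that $\omega = du$ satisfies an $F$-conservation law, so that Theorem 5.1 is applicable. This is exactly the content of Corollary 2.2, which states that for an $F$-harmonic map $\varphi : M \to N$ one has $\operatorname{div} S_{F,d\varphi} = 0$. (Alternatively, one can invoke Corollary 2.1 directly: for $\sigma = u$ one has $\tau_F(u) = 0$ by the $F$-harmonic map equation, and $(d^\nabla)^2 \sigma = 0$ automatically holds for $\sigma = du$ by Remark 2.2, so $\operatorname{div} S_{F, du} = 0$.) This is the only nontrivial verification, and it is essentially handed to us by the material already established in Section~2.

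With these two ingredients in place, Theorem 5.1 applied with $p=1$, $\xi = u^{-1}TN$, and $\omega = du$, together with the growth hypothesis $(5.5)$, immediately yields $F(\tfrac{|du|^2}{2}) \equiv 0$ on $M$. Since $F$ is strictly increasing on $[0,\infty)$ with $F(0) = 0$, this forces $|du|^2 \equiv 0$, hence $du \equiv 0$, and therefore $u$ is constant on the (connected) manifold $M$. The final sentence of the theorem is an immediate special case: if $u$ has finite $F$-energy, i.e.\ $\int_M F(\tfrac{|du|^2}{2})\, dv < \infty$, then the monotone quantity $\int_{B_\rho(x_0)} F(\tfrac{|du|^2}{2})\, dv$ is bounded, hence a fortiori $o(\rho^\lambda)$ whenever $\lambda > 0$, so $(5.5)$ holds trivially.

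There is no real obstacle here beyond checking that the hypotheses on $\lambda$ and on the radial curvature in Theorem 4.1 remain consistent when specialized to $p=1$ (for instance, in case $(i)$ one needs $(m-1)\beta \ge 2\alpha d_F$; in case $(ii)$ one needs $m > 2 d_F$; in case $(iii)$ the analogous inequality with $p=1$). These are precisely the conditions assumed in the statement via ``the condition in Theorem 4.1 in which $p=1$'', and so the proof is complete.
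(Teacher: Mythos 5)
Your proposal is correct and follows exactly the paper's route: the paper also reduces Theorem 5.3 to Theorem 5.1 by setting $\omega = du \in A^1(u^{-1}TN)$ with $p=1$, using the fact (Corollary 2.2) that $F$-harmonic maps satisfy an $F$-conservation law. Your added details — strict monotonicity of $F$ forcing $du\equiv 0$, and the finite-energy case following since a bounded integral is $o(\rho^\lambda)$ for $\lambda>0$ — are exactly the steps the paper leaves implicit.
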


\begin{proof} This follows at once from Theorem 5.1 in which $p=1$ and $\omega = du\, .$
\end{proof}

\begin{remark} This is in contrast to a Liouville Theorem for $F$-harmonic maps into a domain of strictly convex function by a different approach (cf. Theorem 12.1 in [We2]).
\end{remark}

\begin{theorem}[Liouville Theorem for $p$-harmonic maps] Let $N$ be a Riemannian manifold. Suppose the radial curvature $K_r$ of $M$ satisfies one of the following three conditions:

$($i$)$ $-\alpha ^2\leq K_r\leq -\beta ^2$ with $\alpha >0$, $\beta
>0$ and $ (m-1)\beta -p\alpha\geq 0$;

$($ii$)$ $K_r = 0$ with $m - p
>0$;

$($iii$)$  $-\frac A{(1+r^2)^{1+\epsilon}}\leq K_r\leq \frac B{(1+r^2)^{1+\epsilon}}$ with $\epsilon > 0\, , A \ge 0\, ,$ $0 < B < 2\epsilon\, ,$ and

$\qquad m - (m-1)\frac B{2\epsilon} - p e^{\frac {A}{2\epsilon}} > 0$.

Then every $p-$harmonic map $u: M \to N$ with the following $p-$energy growth
condition (5.6) is a constant.
\begin{equation}
\frac 1p \int_{B_\rho(x_0)}|du|^p\, dv = o(\rho^\lambda )\quad \text{as
} \rho\rightarrow \infty \tag{5.6}
\end{equation}
where \begin{equation}
\lambda =\left\{
\begin{array}{cc}
m-p\frac \alpha \beta, &\text {if } K_r  \text { satisfies $($i$)$}\\
m-p, &\text {if } K_r \text{ satisfies $($ii$)$}\\
m - (m-1)\frac B{2\epsilon} - p e^{\frac {A}{2\epsilon}}, &\text {if } K_r \text{
satisfies $($iii$)$}.\end{array}
\right.  \tag{5.7}
\end{equation}
In particular,  every $p-$harmonic map $u: M \to N$ with finite $p-$energy is a constant.
\end{theorem}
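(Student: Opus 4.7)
The plan is to recognize Theorem 5.5 as a direct specialization of Theorem 5.3 obtained by taking $F(t)=\tfrac{1}{p}(2t)^{p/2}$, and to verify that every hypothesis translates correctly under this substitution. By the general principle announced in the introduction, a $p$-harmonic map is exactly an $F$-harmonic map for this $F$, so Theorem 5.3 is applicable once the arithmetic matches up.

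The first step is to compute the $F$-degree for $F(t)=\tfrac{1}{p}(2t)^{p/2}=\tfrac{2^{p/2}}{p}t^{p/2}$. Differentiating gives $F'(t)=2^{p/2-1}t^{p/2-1}$, whence
\[
\frac{tF'(t)}{F(t)}=\frac{2^{p/2-1}t^{p/2}}{\tfrac{2^{p/2}}{p}t^{p/2}}=\frac{p}{2}
\]
is constant in $t$, so $d_F=p/2$. Crucially, one must keep two distinct meanings of ``$p$'' apart here: in Theorem 5.3 the symbol $p$ denotes the degree of the vector-bundle-valued form (here equal to $1$, since $du\in A^1(u^{-1}TN)$), whereas the ``$p$'' of $p$-harmonic refers to the exponent in the $F$-degree computation above.

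The second step is to substitute the form-degree $1$ and $d_F=p/2$ into the curvature and dimension inequalities of Theorem 5.3. Condition (i) becomes $(m-1)\beta-2\cdot 1\cdot\alpha\cdot(p/2)=(m-1)\beta-p\alpha\geq 0$; condition (ii) becomes $m-2\cdot 1\cdot(p/2)=m-p>0$; and condition (iii) becomes $m-(m-1)\tfrac{B}{2\epsilon}-2\cdot 1\cdot e^{A/(2\epsilon)}\cdot(p/2)=m-(m-1)\tfrac{B}{2\epsilon}-p\,e^{A/(2\epsilon)}>0$. These are exactly the three hypotheses of Theorem 5.5, and the same substitutions in (4.9) produce the exponent $\lambda$ displayed in (5.7).

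The third step is to rewrite the growth condition (5.5) for $\omega=du$ explicitly: since
\[
F\bigl(\tfrac{|du|^2}{2}\bigr)=\tfrac{1}{p}\bigl(2\cdot\tfrac{|du|^2}{2}\bigr)^{p/2}=\tfrac{1}{p}|du|^p,
\]
the $F$-energy growth hypothesis of Theorem 5.3 becomes $\tfrac{1}{p}\int_{B_\rho(x_0)}|du|^p\,dv=o(\rho^\lambda)$, which is (5.6). Theorem 5.3 then yields $du\equiv 0$, so $u$ is constant; the finite $p$-energy case is the limiting instance $o(\rho^\lambda)$ for $\lambda>0$, which is guaranteed by the assumed inequalities. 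The main (very mild) obstacle is merely bookkeeping: making sure the two roles of the letter $p$ are not conflated when plugging into the statements of Theorems 4.1 and 5.3.
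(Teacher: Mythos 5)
Your proposal is correct and follows exactly the paper's own route: the paper proves this theorem by citing Theorem 5.3 with $F(t)=\frac 1p(2t)^{p/2}$ and $d_F=\frac p2$, which is precisely your specialization (your computation of $d_F$, the substitution of form-degree $1$ for $\omega=du$, and the translation of the growth condition are all accurate). The only difference is that you spell out the bookkeeping — including the useful caution about the two roles of the letter $p$ — which the paper leaves implicit.
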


\begin{proof} This follows immediately from Theorem 5.3 in which $F(t)=\frac 1p(2t)^{\frac p2}\, $ and $d_F = \frac p2\, .$
\end{proof}

\begin{remark}
The case $\frac 1p \int_{B_\rho(x_0)}|du|^p\, dv = o((\ln \rho)^q)\quad \text{as
} \rho\rightarrow \infty$ for some positive number $q$ is due to Liu-Liao [LL1].
\end{remark}

\begin{corollary} Let $M\, ,$ $N\, ,$ $K_r\, ,$ $\lambda\, $ and the growth
condition (5.6) be as in Theorem 5.4, in which $p=2\, .$ Then every harmonic map $u: M \to N$ is a constant.
\end{corollary}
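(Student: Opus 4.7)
The plan is to observe that Corollary 5.1 is an immediate specialization of Theorem 5.4 to the case $p=2$, relying on the classical identification of $2$-harmonic maps with ordinary harmonic maps.

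First I would verify this identification explicitly. Recall from the introduction that for $F(t)=\frac{1}{p}(2t)^{p/2}$, the $F$-energy functional $E_F$ becomes the $p$-energy functional, so a $p$-harmonic map is the Euler--Lagrange critical point of $\frac{1}{p}\int_M |du|^p\, dv$. Setting $p=2$ gives $F(t)=\frac{1}{2}(2t)^1=t$, and the functional reduces to the standard Dirichlet energy $\frac{1}{2}\int_M |du|^2\, dv$, whose critical points are by definition harmonic maps. Hence any harmonic map $u: M\to N$ qualifies as a $2$-harmonic map in the sense of Theorem 5.4.

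Next I would check that all the hypotheses of Theorem 5.4 (with $p=2$) are transferred verbatim from the statement of Corollary 5.1. The radial curvature condition $K_r$, the constant $\lambda$ given by (5.7) with $p=2$, and the growth condition (5.6) (which now reads $\frac{1}{2}\int_{B_\rho(x_0)}|du|^2\, dv = o(\rho^\lambda)$ as $\rho\to\infty$) are assumed by hypothesis. The curvature restrictions under $(i)$, $(ii)$, $(iii)$ reduce to $(m-1)\beta - 2\alpha \geq 0$, $m>2$, and $m-(m-1)\tfrac{B}{2\epsilon}-2e^{A/(2\epsilon)}>0$, respectively, which is simply Theorem 5.4 with $p=2$.

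Finally, I would apply Theorem 5.4 directly to conclude that $u$ is constant. There is no substantive obstacle here: the only substantive work has already been done in establishing Theorem 5.4 (which itself follows from Theorem 5.3 with $p=1$, $\omega=du$, via the $F$-conservation law for $F$-harmonic maps from Corollary 2.2). Thus the proof of Corollary 5.1 consists of invoking Theorem 5.4 with $p=2$ and noting the equivalence between $2$-harmonic and harmonic maps.
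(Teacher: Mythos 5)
Your proposal is correct and matches the paper's (implicit) argument: the paper states Corollary 5.1 without proof precisely because it is the $p=2$ specialization of Theorem 5.4, using the standard fact that $F(t)=\tfrac12(2t)^{1}=t$ turns the $p$-energy into the Dirichlet energy, so harmonic maps are exactly $2$-harmonic maps. Your verification of how the curvature conditions, $\lambda$, and the growth condition (5.6) specialize at $p=2$ is accurate.
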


\begin{theorem} Let $M\, ,$ $N\, ,$ $K_r\, ,$ and $\delta\, $ satisfy the condition
of Proposition 4.1 in which $p=1\, .$ Suppose $(4.15)$ holds for $\omega = du$.
Then every $F-$harmonic map $u: M \to N$ with the following growth
condition is a constant on $M-B_1(x_0)$:
\begin{equation}
\int_{B_\rho(x_0)}F(\frac{|du|^2}2) dv =o(\rho^{1 + \delta})\text{\quad as }\rho\rightarrow \infty\tag{5.8}
\end{equation}
on $M-B_1(x_0)\, .$ In particular, if $u
$ has finite $F-$energy, then $u \equiv \operatorname{const}$ on
$M-B_1(x_0)$.
\end{theorem}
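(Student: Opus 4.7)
The plan is to derive this Liouville theorem as a direct specialization of Theorem 5.2 to the bundle-valued $1$-form $\omega := du \in A^1(u^{-1}TN)$, exactly parallel to how Theorem 5.3 is obtained from Theorem 5.1. All hypotheses of Theorem 5.2 will translate, with $p=1$, into the hypotheses stated here.

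First I would verify that $\omega = du$ satisfies an $F$-conservation law. This is precisely the content of Corollary 2.3, which itself follows from Corollary 2.1: the Euler--Lagrange identity $\tau_F(u) = 0$ holds because $u$ is $F$-harmonic, while $(d^\nabla)^2(du) = 0$ by Remark 2.1 (the case $\sigma = d\varphi$ there). Corollary 2.1 then gives $\operatorname{div} S_{F,du} \equiv 0$, so $du$ satisfies an $F$-conservation law as required.

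Next I would apply Theorem 5.2 with $\xi = u^{-1}TN$, $p=1$, and $\omega = du$. The curvature condition (iv) of Lemma 4.1, together with the assumption $\delta = (m-1)B_0 - 2 d_F \sqrt{A}\coth\sqrt{A} \ge 0$ (which is the $p=1$ specialization of the hypothesis in Proposition 4.1), matches what Theorem 5.2 demands; condition $(4.15)$ for $du$ is assumed directly; and the growth hypothesis $(5.8)$ is exactly $(5.4)$ for $\omega = du$ with $p=1$. Theorem 5.2 therefore concludes that $du \equiv 0$ on $M - B_1(x_0)$.

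To pass from $du \equiv 0$ to $u \equiv \operatorname{const}$, I would use the assumption that $M$ has a pole $x_0$: the exponential map identifies $M$ with $T_{x_0}M \cong \mathbb{R}^m$, so $M - B_1(x_0)$ is diffeomorphic to $\mathbb{R}^m \setminus \overline{B_1(0)}$ and in particular connected (recall $m > 2$ under the standing convention). Hence vanishing of $du$ there forces $u$ to be constant on $M - B_1(x_0)$. Finally, the ``in particular'' clause is immediate: finite $F$-energy makes $\rho \mapsto \int_{B_\rho(x_0)} F(|du|^2/2)\, dv$ bounded, which is trivially $o(\rho^{1+\delta})$ since $1 + \delta \ge 1$. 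The only mildly delicate point in the plan is bookkeeping in step one, namely checking that the hypotheses of Corollary 2.1 genuinely apply with $\sigma = du$ and $d^\nabla \sigma$ playing the role of the $2$-form side of Remark 2.1; beyond this the argument is a clean citation of Theorem 5.2.
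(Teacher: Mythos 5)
Your argument is correct and follows essentially the same route as the paper, which simply cites Proposition 4.1 (your detour through Theorem 5.2 is the same monotonicity-plus-growth argument, since Theorem 5.2 is itself the direct consequence of Proposition 4.1). The only slip is a citation label: the conservation law for $F$-harmonic maps is Corollary 2.2 (via Corollary 2.1 and Remark 2.1), not ``Corollary 2.3.''
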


\begin{proof} This follows at once from Proposition 4.1.
\end{proof}

\begin{proposition}  Let $(M,g)$ be an $m-$dimensional
complete Riemannian manifold whose radial curvature satisfies
$-Ar^{2q}\leq K_r\leq -Br^{2q}\,
$
with $A\geq B>0$ and $q>0$. If $\delta
:=(m-1)B_0-p\sqrt{A}\coth \sqrt{A} \geq 0$, where $B_0$ is
given in Lemma 4.1. Suppose $(4.15)$ holds for $\omega = du$.  Then every $p-$harmonic map $u: M \to N$ with the growth
condition $\frac 1p \int_{B_\rho (x_0)}|du|^p\, dv = o(\rho^{1 + \delta})\text{\quad as }\rho\rightarrow \infty$ is a constant on $M-B_1(x_0)\, ,$
In particular, if $u
$ has finite $p-$energy, then $u \equiv \operatorname{const}$ on
$M-B_1(x_0)$.
\end{proposition}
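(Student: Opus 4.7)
The plan is to deduce this proposition as a direct specialization of Theorem 5.2 (equivalently, Proposition 4.1) applied to the vector bundle-valued $1$-form $\omega = du \in A^1(u^{-1}TN)$ with $F(t) = \frac{1}{p}(2t)^{p/2}$. Most of the work has already been done in Section 4; the proof amounts to verifying that the parameters line up.

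First I would check that this choice of $F$ matches the quantities appearing in Proposition 4.1. A direct computation gives $F'(t) = (2t)^{(p-2)/2}$ and $d_F = \sup_{t\ge 0} \frac{tF'(t)}{F(t)} = \frac{p}{2}$. In Proposition 4.1 the symbol $p$ stands for the form degree of $\omega$, which here equals $1$, so the constant $\delta$ appearing there becomes
\[
(m-1)B_0 - 2 \cdot 1 \cdot d_F \sqrt{A}\coth\sqrt{A} = (m-1)B_0 - p\sqrt{A}\coth\sqrt{A},
\]
exactly the $\delta$ in the statement (with $p$ now reassuming its role as the $p$-harmonic exponent). Likewise $F(\tfrac{|du|^2}{2}) = \tfrac{1}{p}|du|^p$, so the hypothesized growth condition is precisely hypothesis $(5.4)$.

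Next I would verify that $\omega = du$ satisfies an $F$-conservation law. Since $u$ is $p$-harmonic it is $F$-harmonic for our $F$, and Corollary 2.2 (together with the fact that $(d^\nabla)^2 du = 0$, noted in Remark 2.2) yields $\operatorname{div} S_{F, du} = 0$. Combined with the assumed inequality $(4.15)$ for $\omega = du$, all hypotheses of Proposition 4.1 are met.

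Applying that proposition to $\omega = du$ forces $\rho^{-(1+\delta)}\!\int_{B_\rho(x_0) - B_1(x_0)} \tfrac{1}{p}|du|^p\, dv$ to be non-decreasing in $\rho$, while the growth hypothesis forces it to tend to zero; hence $|du| \equiv 0$ on $M - B_1(x_0)$, i.e.\ $u$ is constant there. The finite $p$-energy case is immediate, since $\int_M \tfrac{1}{p}|du|^p\, dv < \infty$ is trivially $o(\rho^{1+\delta})$. There is no genuine obstacle; the only bookkeeping point is disentangling the two uses of the symbol $p$ (form degree versus the $p$-harmonic exponent) when invoking Proposition 4.1.
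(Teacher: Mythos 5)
Your proposal is correct and follows exactly the route the paper intends: Proposition 5.1 is the specialization of Proposition 4.1 (equivalently Theorem 5.2) to $\omega=du\in A^1(u^{-1}TN)$ with $F(t)=\frac1p(2t)^{p/2}$, using $d_F=\frac p2$, form degree $1$, and the $F$-conservation law for $F$-harmonic maps from Corollary 2.2. The parameter bookkeeping (form degree versus $p$-harmonic exponent) and the passage from monotonicity plus the $o(\rho^{1+\delta})$ growth to $du\equiv 0$ on $M-B_1(x_0)$ are all handled correctly.
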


\begin{corollary} Let $M\, ,$ $N\, ,$ $K_r\, ,$ $\delta\, ,$ $(4.15)\, ,$ and the growth condition
be as in Proposition 5.1, in which $p=2\, .$  Then every harmonic map $u: M \to N$ is a constant.
\end{corollary}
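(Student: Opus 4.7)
The plan is to reduce Corollary 5.2 directly to Proposition 5.1 and then upgrade the conclusion from "constant off a compact set" to "constant on all of $M$" via unique continuation.

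First I would specialize Proposition 5.1 to the case $p=2$, which corresponds to $F(t)=\frac{1}{2}(2t)^{2/2}=t$, so that a $p$-harmonic map is just a harmonic map and $F(\tfrac12|du|^2)=\tfrac12|du|^2$. The $1$-form $\omega=du\in A^1(u^{-1}TN)$ automatically satisfies $d^\nabla\omega=0$ and $\tau_F(u)=-\delta^\nabla du=0$, so by Corollary 2.1 it satisfies an $F$-conservation law. The hypothesis on $K_r$, the inequality $\delta=(m-1)B_0-2\sqrt{A}\coth\sqrt{A}\ge 0$, the boundary condition (4.15) and the growth condition $\tfrac12\int_{B_\rho(x_0)}|du|^2\,dv=o(\rho^{1+\delta})$ are all in force by hypothesis, so Proposition 5.1 (equivalently, the monotonicity estimate (4.14)) yields $du\equiv 0$ on $M\setminus B_1(x_0)$. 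Hence $u$ is locally constant, and in particular constant on this open set (which is connected since $M$ has a pole, hence is diffeomorphic to $\mathbb{R}^m$).

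It remains to promote constancy on the open set $M\setminus B_1(x_0)$ to constancy on all of $M$. Here I would invoke the unique continuation principle for harmonic maps: in local coordinates the harmonic map equation reads
\begin{equation*}
\Delta_M u^\alpha + \Gamma^\alpha_{\beta\gamma}(u)\,g^{ij}\partial_i u^\beta\partial_j u^\gamma=0,
\end{equation*}
a semilinear second-order elliptic system whose principal part is the Laplace--Beltrami operator. By the Aronszajn--Sampson unique continuation theorem, a solution that is constant on a nonempty open subset of the connected manifold $M$ must be constant everywhere. Since $M$ is diffeomorphic to $\mathbb{R}^m$ via the exponential map at the pole, connectedness is automatic, and we conclude $u\equiv\mathrm{const}$ on $M$.

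There is essentially no serious obstacle beyond verifying that the abstract hypotheses of Proposition 5.1 specialize correctly (in particular, that $F(t)=t$ gives $d_F=1$ so that the factor $2pd_F\sqrt{A}\coth\sqrt{A}$ matches $2\sqrt{A}\coth\sqrt{A}$ appearing in the definition of $\delta$), and citing unique continuation in the appropriate form. The remark following Theorem 5.2 flags exactly this route, so Corollary 5.2 should follow in a few lines.
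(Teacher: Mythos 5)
Your proposal is correct and follows exactly the paper's own route: the paper's proof of this corollary is the one-line statement that it ``follows immediately from Proposition 5.1 and the unique continuation property of a harmonic map,'' which is precisely your reduction (Proposition 5.1 with $p=2$, i.e.\ $F(t)=t$, gives $du\equiv 0$ on $M\setminus B_1(x_0)$, and Aronszajn--Sampson unique continuation extends constancy to all of $M$). Your added verification that $2pd_F\sqrt{A}\coth\sqrt{A}$ specializes correctly to $2\sqrt{A}\coth\sqrt{A}$ is a useful sanity check but not a departure from the paper's argument.
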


\begin{proof} This follows immediately from Proposition 5.1 and the unique continuation property of a harmonic map.
\end{proof}

\subsection{Applications in $F$-Yang-Mills fields}

Let $R^\nabla $ be an $F-$Yang-Mills field, associated with an $F-$Yang-Mills connection $\nabla$ on the adjoint bundle $Ad(P)$ of a principle $G$-bundle over a manifold $M\, .$ Then $R^\nabla $ can be viewed as a $2$-form with values in the adjoint bundle over $M\, ,$ and by Theorem 3.1,
$\omega = R^\nabla$ satisfies an $F-$conservation law.

 \begin{theorem}[Vanishing Theorem for $F$-Yang-Mills fields]Let $M\, ,$ $K_r\, ,$ and $\lambda $ satisfy the condition
in Theorem 4.1 in which $p=2\, .$ Suppose $F-$Yang-Mills field  $R^{\nabla}$ satisfies the following growth
condition
\begin{equation}
\int_{B_\rho(x_0)} F(\frac{|R^{\nabla} |^2}2)\, dv = o(\rho^\lambda )\quad \text{as
} \rho\rightarrow \infty.\tag{5.9}
\end{equation}
Then $R^{\nabla} \equiv 0$ on $M\, .$
 In particular, every $F-$Yang-Mills field $R^{\nabla}$  with finite $F-$Yang-Mills energy vanishes on $M$.
\end{theorem}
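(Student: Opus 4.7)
The plan is to deduce this vanishing theorem as an immediate consequence of the general vanishing result, Theorem 5.1, applied with the specific choice $p=2$ and $\omega = R^{\nabla} \in A^{2}(Ad(P))$. The adjoint bundle $Ad(P)$ carries a natural fiber metric induced from the $Ad_{G}$-invariant inner product on the Lie algebra $\mathcal{G}$ (as described at the beginning of Section 3), so it is a Riemannian vector bundle over $M$ and the framework of Sections 2--4 applies verbatim to $R^{\nabla}$.

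First, I would check that both hypotheses of Theorem 5.1 are satisfied. The radial curvature assumption on $M$ and the associated exponent $\lambda$ (given by (4.9) with $p=2$) are imposed in the statement. The crucial $F$-conservation law $\operatorname{div} S_{F,R^{\nabla}} = 0$ is exactly the content of Theorem 3.1, which was established by combining the Bianchi identity $d^{\nabla} R^{\nabla} = 0$ with the Euler--Lagrange equation $\delta^{\nabla}\bigl(F'(\tfrac{1}{2}\|R^{\nabla}\|^{2}) R^{\nabla}\bigr) = 0$ from Lemma 3.1 and the divergence identity in Lemma 2.3. Thus $R^{\nabla}$ falls inside the hypothesis class of Theorem 5.1.

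Next, the growth condition (5.9) is literally condition (5.1) specialized to $\omega = R^{\nabla}$. Theorem 5.1 therefore yields $F\bigl(\tfrac{|R^{\nabla}|^{2}}{2}\bigr) \equiv 0$ on $M$. Since $F: [0,\infty) \to [0,\infty)$ is strictly increasing with $F(0)=0$, we have $F(t) > 0$ for every $t > 0$; consequently $|R^{\nabla}|^{2} \equiv 0$, i.e.\ $R^{\nabla} \equiv 0$ on $M$. The final sentence, asserting that a finite $F$-Yang-Mills energy forces $R^{\nabla} \equiv 0$, is a special case: under the hypotheses of Theorem 4.1 one has $\lambda > 0$, and finiteness of $\int_{M} F(\tfrac{|R^{\nabla}|^{2}}{2}) \, dv$ trivially implies $\int_{B_{\rho}(x_{0})} F(\tfrac{|R^{\nabla}|^{2}}{2}) \, dv = O(1) = o(\rho^{\lambda})$.

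There is essentially no serious obstacle; the entire argument is a packaging of Theorem 3.1 (conservation law), Theorem 5.1 (monotonicity and vanishing under subcritical growth), and the strict monotonicity of $F$. The one point that genuinely warrants verification, and is the conceptual reason the theorem works, is that the Bianchi identity together with the Euler--Lagrange equation of $\mathcal{YM}_{F}$ produce precisely the $F$-conservation law needed to activate the monotonicity formula of Theorem 4.1---this was the whole point of introducing $F$-Yang-Mills fields in Section 3.
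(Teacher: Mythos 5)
Your proposal is correct and follows exactly the paper's own route: the paper proves this theorem in one line by applying Theorem 5.1 with $p=2$ and $\omega = R^{\nabla}$, having already noted (in the preamble to this subsection, via Theorem 3.1) that $R^{\nabla}$ satisfies an $F$-conservation law. Your additional verifications — that $Ad(P)$ is a Riemannian vector bundle, that the conservation law comes from the Bianchi identity plus the Euler--Lagrange equation, and that strict monotonicity of $F$ converts $F(\tfrac{|R^{\nabla}|^2}{2})\equiv 0$ into $R^{\nabla}\equiv 0$ — are exactly the implicit steps the paper relies on.
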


\begin{proof} This follows at once from Theorem 5.1 in which $p=2$ and $\omega = R^\nabla\, .$
\end{proof}

\begin{theorem}[Vanishing Theorem for $p$-Yang-Mills fields] Suppose the radial curvature $K_r$ of $M$ satisfies the one of the following conditions:

$($i$)$ $-\alpha ^2\leq K_r\leq -\beta ^2$ with $\alpha >0$, $\beta
>0$ and $ (m-1)\beta -2p\alpha\geq 0$;

$($ii$)$ $K_r = 0$ with $m - 2p >0$;

$($iii$)$  $-\frac A{(1+r^2)^{1+\epsilon}}\leq K_r\leq \frac B{(1+r^2)^{1+\epsilon}}$ with $\epsilon > 0\, , A \ge 0\, ,$ and $0 < B < 2\epsilon\, ,$
and

$\qquad m - (m-1)\frac B{2\epsilon} -2p e^{\frac {A}{2\epsilon}} > 0$.

Then every $p-$Yang-Mills field $R^{\nabla}$ with the following growth
condition vanishes:
\begin{equation}
\frac 1p \int_{B_\rho(x_0)}|R^{\nabla } |^p \, dv = o(\rho^\lambda )\quad \text{as
} \rho\rightarrow \infty\tag{5.10}
\end{equation}
where \begin{equation}
\lambda =\left\{
\begin{array}{cc}
m-2p\frac \alpha \beta, &\text {if } K_r  \text { satisfies $($i$)$}\\
m-2p, &\text {if } K_r \text{ satisfies $($ii$)$}\\
m - (m-1)\frac B{2\epsilon} -2p e^{\frac {A}{2\epsilon}} > 0, &\text {if } K_r \text{
satisfies $($iii$)$}\, .\end{array}
\right.  \tag{5.11}
\end{equation}
In particular, every $p-$Yang-Mills field $R^{\nabla}$  with finite $\mathcal{YM}_p-$energy vanishes on $M$.
\end{theorem}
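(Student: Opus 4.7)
The plan is to derive Theorem 5.8 as a direct specialization of Theorem 5.7 (the vanishing theorem for $F$-Yang-Mills fields) by choosing $F(t)=\frac{1}{p}(2t)^{p/2}$, the function for which the $F$-Yang-Mills functional coincides with $\mathcal{YM}_p$. I would first check that this $F$ belongs to the class used throughout the paper, namely that it is strictly increasing and $C^2$ on $[0,\infty)$ with $F(0)=0$, and then compute its $F$-degree: since $F'(t)=(2t)^{p/2-1}$, one obtains $tF'(t)/F(t)=p/2$ identically, so $d_F=p/2$.

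The second step is to verify the conservation-law hypothesis needed to invoke Theorem 5.7. By Corollary 3.1, every $p$-Yang-Mills field satisfies a $p$-conservation law, which is precisely the $F$-conservation law for the above $F$. Thus, viewing $R^\nabla\in A^2(Ad(P))$ as a vector-bundle valued $2$-form, the form-degree in the setting of Theorem 5.7 is $k=2$, and the main prerequisite of that theorem is automatic.

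The third step is a routine substitution. Plugging $d_F=p/2$ and form-degree $2$ into the curvature conditions and the exponent $\lambda$ of Theorem 4.1 / Theorem 5.7, the factor $2\cdot 2\cdot d_F$ collapses to $2p$, so the three curvature hypotheses become exactly $(m-1)\beta-2p\alpha\ge 0$, $m-2p>0$, and $m-(m-1)\tfrac{B}{2\epsilon}-2p\,e^{A/(2\epsilon)}>0$, while the exponents in (4.9) reduce to (5.11). Moreover $F\!\left(\tfrac{|R^\nabla|^2}{2}\right)=\tfrac{1}{p}|R^\nabla|^p$, so the growth hypothesis (5.9) of Theorem 5.7 becomes exactly (5.10). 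Applying Theorem 5.7 then yields $R^\nabla\equiv 0$ on $M$.

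For the \emph{in particular} clause, I would observe that each of the three curvature assumptions forces $\lambda>0$: for instance, in case (i), $(m-1)\beta\ge 2p\alpha$ gives $\alpha/\beta\le(m-1)/(2p)$, hence $\lambda=m-2p\alpha/\beta\ge 1$; cases (ii) and (iii) are immediate. Consequently any $R^\nabla$ with finite $\mathcal{YM}_p$-energy trivially satisfies $o(\rho^\lambda)$. There is no genuine obstacle; the only point requiring care is the bookkeeping between the generic form-degree symbol $p$ used in Theorems 4.1/5.1 and the exponent $p$ appearing in $\mathcal{YM}_p$, so that the product $2k\,d_F$ (with $k=2$ and $d_F=p/2$) is correctly identified with $2p$.
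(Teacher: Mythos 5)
Your proposal is correct and follows exactly the route the paper intends: Theorem 5.8 is the specialization of Theorem 5.7 (equivalently, of Theorem 5.1 with form-degree $2$) to $F(t)=\frac 1p(2t)^{p/2}$, using Corollary 3.1 for the conservation law and $d_F=p/2$ so that $2\cdot 2\cdot d_F=2p$, precisely as the paper does for the parallel Theorem 5.4. Your bookkeeping of the two roles of the symbol $p$ and your check that $\lambda>0$ in each case (so finite $\mathcal{YM}_p$-energy implies the $o(\rho^\lambda)$ condition) are both accurate.
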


\begin{corollary} Let  $M\, ,$ $N\, ,$ $K_r\, ,$ $\lambda\, ,$ and the growth
condition (5.12) be as in Theorem 5.10, in which $p=2\, .$ Then every Yang-Mills field $R^\nabla \equiv 0$ on $M$.
\end{corollary}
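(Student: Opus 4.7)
The plan is simply to read this off as the specialization of Theorem 5.10 to $p=2$. First, I would note that by Definition 3.2 the $2$-Yang-Mills functional is
\[
\mathcal{YM}_2(\nabla) \;=\; \tfrac{1}{2}\int_M \|R^\nabla\|^2\, dv,
\]
which is exactly the ordinary Yang-Mills functional. Correspondingly, $F(t)=\tfrac{1}{p}(2t)^{p/2}$ with $p=2$ reduces to $F(t)=t$. Therefore every Yang-Mills connection on $\mathrm{Ad}(P)$ is a $2$-Yang-Mills connection and every Yang-Mills field is a $2$-Yang-Mills field. The hypotheses of the Corollary on $M$, $K_r$, and $\lambda$ are, by definition, those of Theorem 5.10 with $p=2$, so they are in force without further checking.

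Next, I would observe that the growth hypothesis inherited from Theorem 5.10 at $p=2$ is precisely
\[
\tfrac{1}{2}\int_{B_\rho(x_0)} \|R^\nabla\|^2\, dv \;=\; o(\rho^\lambda) \quad \text{as } \rho \to \infty,
\]
with $\lambda$ given by (5.11) in the case $p=2$, i.e.\ $\lambda = m - 4\tfrac{\alpha}{\beta}$, $m-4$, or $m - (m-1)\tfrac{B}{2\epsilon} - 4 e^{A/(2\epsilon)}$ according to the three radial curvature alternatives. Invoking Theorem 5.10 then yields $R^\nabla \equiv 0$ on $M$ directly.

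No obstacle is expected: the Corollary is a re-reading of the $p=2$ case and requires no independent argument beyond noting the identification of $\mathcal{YM}_2$ with the classical Yang-Mills functional. Its role is to make explicit the classical consequence — in particular the Euclidean case $K_r \equiv 0$ with $m > 4$ recaptures the well-known vanishing result for Yang-Mills fields on $\mathbb{R}^m$ with $\|R^\nabla\|^2$-growth of order $o(\rho^{m-4})$, which in turn subsumes the finite-energy Liouville theorem of Price--Simon and others.
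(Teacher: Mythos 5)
Your proposal is correct and is exactly the argument the paper intends: the corollary is the $p=2$ specialization of the Vanishing Theorem for $p$-Yang-Mills fields (the cross-reference to ``Theorem 5.10'' and ``(5.12)'' in the statement is a numbering slip for the immediately preceding theorem and its growth condition $\frac 1p\int_{B_\rho(x_0)}|R^\nabla|^p\,dv=o(\rho^\lambda)$ with $\lambda$ as in the displayed formula), combined with the observation that $\mathcal{YM}_2$ is the classical Yang-Mills functional so every Yang-Mills field is a $2$-Yang-Mills field and, by Corollary 3.1, satisfies the requisite conservation law. The paper offers no separate proof, and your reading of the hypotheses and of $\lambda$ in each curvature case matches the source.
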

\begin{theorem} Suppose $M\, ,$  $K_r\, ,$ and $\delta\, ,$  satisfy the same conditions
of Proposition 4.1 in which $p=2\, ,$ and $(4.15)$ holds for $\omega=R^{\nabla}\, .$ Then every $F-$Yang-Mills field $R^\nabla$ with the following growth
condition vanishes on $M-B_1(x_0)$:
\begin{equation}
\int_{B_\rho(x_0)}F(\frac{|R^\nabla|^2}2) dv =o(\rho^{1 + \delta
})\text{\quad as }\rho\rightarrow \infty\tag{5.12}
\end{equation}
In particular, if $R^\nabla$ has finite $F-$Yang-Mills energy, then $R^\nabla \equiv 0$ on
$M-B_1(x_0)$.
\end{theorem}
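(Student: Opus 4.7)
The plan is to derive Theorem 5.12 as a direct specialization of Proposition 4.1 to the Yang--Mills setting, invoking Theorem 3.1 to supply the requisite $F$-conservation law. First I would observe that, by Theorem 3.1, every $F$-Yang--Mills field $R^{\nabla}\in A^{2}(Ad(P))$ satisfies $\operatorname{div} S_{F,R^{\nabla}}=0$. Taking $\omega=R^{\nabla}$ and $p=2$, all hypotheses of Proposition 4.1 are met: the radial curvature $K_r$ satisfies $(iv)$, the constant $\delta=(m-1)B_0-4d_F\sqrt{A}\coth\sqrt{A}$ is nonnegative by assumption, and $(4.15)$ is assumed for $\omega=R^{\nabla}$.

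Next I would apply the monotonicity formula $(4.14)$ to $\omega=R^{\nabla}$, obtaining
\[\frac{1}{\rho_{1}^{1+\delta}}\int_{B_{\rho_{1}}(x_0)-B_1(x_0)} F\bigl(\tfrac{|R^{\nabla}|^2}{2}\bigr)\,dv \;\le\; \frac{1}{\rho_{2}^{1+\delta}}\int_{B_{\rho_{2}}(x_0)-B_1(x_0)} F\bigl(\tfrac{|R^{\nabla}|^2}{2}\bigr)\,dv\]
for all $1\le\rho_{1}\le\rho_{2}$. Letting $\rho_{2}\to\infty$ and invoking the growth hypothesis $(5.12)$, the right-hand side is dominated by $\rho_{2}^{-(1+\delta)}\int_{B_{\rho_{2}}(x_0)} F(|R^{\nabla}|^2/2)\,dv$, which tends to zero. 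Hence $\int_{B_{\rho_{1}}(x_0)-B_1(x_0)} F(|R^{\nabla}|^2/2)\,dv=0$ for every $\rho_{1}\ge 1$.

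I would then conclude using the strict monotonicity of $F$ with $F(0)=0$: the nonnegative integrand $F(|R^{\nabla}|^2/2)$ vanishes identically on $M-B_1(x_0)$ if and only if $|R^{\nabla}|=0$ there, yielding $R^{\nabla}\equiv 0$ on $M-B_1(x_0)$. For the \emph{in particular} assertion, note that if $\mathcal{YM}_{F}(\nabla)<\infty$, then $\int_{B_{\rho}(x_0)} F(|R^{\nabla}|^2/2)\,dv$ is uniformly bounded, hence trivially $o(\rho^{1+\delta})$ since $\delta\ge 0$, so $(5.12)$ is automatic.

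No step presents a genuine obstacle, since the heavy lifting---the Hessian comparison under $(iv)$, the pointwise stress-energy inequality, and the coarea-formula differential inequality integrating up to $(4.14)$---has already been carried out in Lemma 4.1, Lemma 4.2 and Proposition 4.1. The only thing to verify is the compatibility of the $F$-Yang--Mills data with those hypotheses, which is immediate from Theorem 3.1 together with the standing assumption of $(4.15)$ for $\omega=R^{\nabla}$; the mild conceptual point worth underlining is that the vanishing is only obtained \emph{outside} the unit ball because Proposition 4.1 produces a monotone quantity only on the annular region $B_{\rho}(x_0)-B_1(x_0)$, a reflection of the boundary term at $\partial B_1$ in the integral identity $(2.11)$.
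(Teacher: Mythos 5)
Your proof is correct and follows the same route as the paper, which simply cites Proposition 4.1; you have merely filled in the routine details (the $F$-conservation law from Theorem 3.1, letting $\rho_2\to\infty$ in the monotonicity inequality (4.14), and using that $F$ is strictly increasing with $F(0)=0$ to pass from vanishing of the integral to vanishing of $R^{\nabla}$). The remarks on the ``in particular'' clause and on why the conclusion only holds outside $B_1(x_0)$ are both accurate.
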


\begin{proof} This follows immediately from Proposition 4.1.
\end{proof}

\begin{proposition}  Let $(M,g)$ be an $m-$dimensional
complete Riemannian manifold whose radial curvature satisfies
$-Ar^{2q}\leq K_r\leq -Br^{2q}\,
$
with $A\geq B>0$ and $q>0$. Let $\delta
:=(m-1)B_0-2p\sqrt{A}\coth \sqrt{A} \geq 0$, where $B_0$ is
given in Lemma 4.1, and let $(4.15)$ hold for $\omega=R^{\nabla}\, .$  Then every $p-$Yang-Mills field $R^\nabla$ with the growth
condition $\frac 1p \int_{B_\rho(x_0)}|R^\nabla|^p\, dv = o(\rho^{1 + \delta
})\text{\quad as }\rho\rightarrow \infty$ vanishes on $M-B_1(x_0)\, ,$
In particular, if $R^\nabla
$ has finite $p-$Yang-Mills energy, then $R^\nabla \equiv 0$ on
$M-B_1(x_0)$.
\end{proposition}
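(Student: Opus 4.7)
The plan is to deduce this proposition as a direct specialization of Proposition~4.1 applied to the particular $F$ coming from the $p$-Yang-Mills functional. By the definition of a $p$-Yang-Mills field, $R^{\nabla}\in A^{2}(Ad(P))$ is a critical point of $\mathcal{YM}_{p}$, which is the $F$-Yang-Mills functional with $F(t)=\tfrac{1}{p}(2t)^{p/2}$. Hence by Corollary~3.1 the curvature form $R^{\nabla}$ satisfies a $p$-conservation law, i.e.\ an $F$-conservation law for this choice of $F$. This is the first ingredient needed to apply the integral identity~(2.11).

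The second ingredient is computing $d_F$ for $F(t)=\tfrac{1}{p}(2t)^{p/2}$. A direct differentiation gives $F'(t)=2^{p/2-1}t^{p/2-1}$, so
\[
\frac{tF'(t)}{F(t)}=\frac{p}{2}
\]
is constant, whence $d_F=p/2$. Next, since $R^{\nabla}$ is a $2$-form, the form-degree entering Proposition~4.1 (denoted $p$ there) equals $2$; thus the generic constant $2pd_F$ that appears in the curvature condition of Proposition~4.1 becomes $2\cdot 2\cdot(p/2)=2p$, which matches exactly the definition of $\delta=(m-1)B_{0}-2p\sqrt{A}\coth\sqrt{A}$ stated in the proposition. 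Similarly, with this $F$, the general $\mathcal{E}_{F,g}$-energy reduces to
\[
\int_{B_{\rho}(x_{0})}F\!\left(\tfrac{|R^{\nabla}|^{2}}{2}\right)dv=\frac{1}{p}\int_{B_{\rho}(x_{0})}|R^{\nabla}|^{p}\,dv,
\]
so the growth hypothesis of the proposition is precisely the growth hypothesis~(5.4) of Theorem~5.2 / Proposition~4.1.

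With these identifications all hypotheses of Proposition~4.1 are in force: the curvature pinching $-Ar^{2q}\le K_r\le -Br^{2q}$ is assumed, $\omega=R^{\nabla}$ satisfies an $F$-conservation law, $\delta\ge 0$ is assumed, and (4.15) is assumed. Therefore the monotonicity formula~(4.14) applies, yielding
\[
\frac{1}{\rho_{1}^{1+\delta}}\int_{B_{\rho_{1}}(x_{0})-B_{1}(x_{0})}\!\!\!\tfrac{1}{p}|R^{\nabla}|^{p}\,dv\;\le\;\frac{1}{\rho_{2}^{1+\delta}}\int_{B_{\rho_{2}}(x_{0})-B_{1}(x_{0})}\!\!\!\tfrac{1}{p}|R^{\nabla}|^{p}\,dv
\]
for all $1\le\rho_{1}\le\rho_{2}$. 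Letting $\rho_{2}\to\infty$ and using the growth hypothesis $\tfrac{1}{p}\int_{B_{\rho}(x_{0})}|R^{\nabla}|^{p}\,dv=o(\rho^{1+\delta})$ shows that the right-hand side tends to $0$. Hence $\int_{B_{\rho_{1}}(x_{0})-B_{1}(x_{0})}|R^{\nabla}|^{p}\,dv=0$ for every $\rho_{1}\ge 1$, so $R^{\nabla}\equiv 0$ on $M-B_{1}(x_{0})$. The ``in particular'' statement follows since finite $p$-Yang-Mills energy gives the integral a finite limit, which when divided by $\rho^{1+\delta}\to\infty$ is automatically $o(\rho^{1+\delta})$.

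There is essentially no technical obstacle here beyond bookkeeping: everything has already been prepared in the preceding sections. The only point requiring care is the dual role of the letter $p$---it simultaneously denotes the degree of the form (fixed at $2$ for curvature $2$-forms) and the exponent in the $p$-Yang-Mills functional---so one must verify that the arithmetic $2\cdot(\text{form degree})\cdot d_F=2p$ reproduces the $\delta$ in the statement. This is the only nontrivial consistency check; once it is made, the result is an immediate corollary of Proposition~4.1.
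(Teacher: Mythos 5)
Your proposal is correct and follows exactly the route the paper intends: Proposition 5.2 is the specialization of Proposition 4.1 (via Corollary 3.1 for the $p$-conservation law) to $F(t)=\frac1p(2t)^{p/2}$ with $d_F=p/2$ and form degree $2$, so that $2\cdot 2\cdot d_F=2p$ reproduces the stated $\delta$, and the monotonicity inequality (4.14) combined with the $o(\rho^{1+\delta})$ growth forces the energy over $B_{\rho_1}(x_0)-B_1(x_0)$ to vanish. The paper gives no separate argument for this proposition beyond citing Proposition 4.1, and your bookkeeping of the two roles of $p$ is the only point that needed checking; it is done correctly.
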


\begin{corollary} Let $M\, ,$ $K_r\, ,$ $\delta\, ,$ $(4.15)\, ,$ and the growth
condition be as in Proposition 5.2, in which $p=2\, .$ Then every Yang-Mills field $R^\nabla \equiv 0$ on $M\, .$
\end{corollary}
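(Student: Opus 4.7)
The plan is to proceed in two steps, mirroring the passage from Proposition 5.1 to Corollary 5.2. First, I would invoke Proposition 5.2 directly with $p=2$: under the stated curvature pinching $-Ar^{2q}\leq K_r\leq -Br^{2q}$, the constant $\delta=(m-1)B_0-2\sqrt{A}\coth\sqrt{A}\geq 0$, the boundary condition $(4.15)$ applied to $\omega=R^{\nabla}$, and the growth hypothesis
\[
\tfrac12\int_{B_\rho(x_0)}|R^{\nabla}|^2\, dv=o(\rho^{1+\delta})\quad\text{as }\rho\to\infty,
\]
Proposition 5.2 (with $p=2$) yields $R^{\nabla}\equiv 0$ on $M\setminus B_1(x_0)$. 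This is the content of the monotonicity-based part and is essentially automatic from what is already proved.

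The second, and more delicate, step is to upgrade vanishing on the exterior $M\setminus B_1(x_0)$ to vanishing on all of $M$. For this I would invoke the unique continuation property for Yang-Mills connections. A Yang-Mills field satisfies simultaneously the Bianchi identity $d^\nabla R^\nabla=0$ and the Euler-Lagrange equation $\delta^\nabla R^\nabla=0$. Combined with a Weitzenb\"ock-type identity, these yield an elliptic equation of the schematic form $\Delta R^\nabla=Q(R^\nabla)$, where $Q$ depends linearly on $R^\nabla$ with coefficients involving the ambient curvature of $M$ and the bundle itself. Aronszajn-type unique continuation then applies: if $R^\nabla$ vanishes on the open set $M\setminus B_1(x_0)$, it vanishes on the connected component containing this open set, and hence on all of $M$ (since $M$ is connected, being complete with a pole and thus diffeomorphic to $\mathbb{R}^m$). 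Combining the two steps gives $R^\nabla\equiv 0$ on $M$.

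The main obstacle I expect is purely expository, namely citing (or stating) the correct form of unique continuation for Yang-Mills fields. Strictly speaking, unique continuation for the connection itself requires a gauge choice (e.g.\ a Coulomb or broken Hodge gauge), but here we only need unique continuation for the tensor field $R^\nabla$, which is gauge-invariant; the Weitzenb\"ock formula $\Delta_H R^\nabla = \nabla^*\nabla R^\nabla + \mathcal{R}\ast R^\nabla + R^\nabla\ast R^\nabla$ (where $\Delta_H=d^\nabla\delta^\nabla+\delta^\nabla d^\nabla$ vanishes on $R^\nabla$ by Yang-Mills plus Bianchi) reduces the question to Aronszajn's unique continuation principle for scalar elliptic inequalities, applied componentwise in a local trivialization. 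No further analytic work is required: the monotonicity-type argument is already encapsulated in Proposition 5.2, and the analytic continuation is standard. The proof then reads simply: ``This follows immediately from Proposition 5.2 and the unique continuation property of a Yang-Mills field.''
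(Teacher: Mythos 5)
Your proposal matches the paper's proof exactly: the paper also deduces the result ``at once from Proposition 5.2, and the unique continuation property of Yang-Mills field.'' Your additional discussion of the Weitzenb\"ock identity and Aronszajn's theorem simply fills in the justification for the unique continuation step that the paper takes for granted.
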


\begin{proof} This follows at once from Proposition 5.2, and the unique continuation property of Yang-Mills field.
\end{proof}

Further applications will be treated in Section 8.

\section{Constant Dirichlet Boundary-Value Problems}

To investigate the constant Dirichlet boundary-value problems for $1$-forms, we begin with

\begin{definition}
The \emph{$F$-lower degree} $l_F$ is given by
$$
l_F=\inf_{t\geq 0}\frac{tF^{\prime }(t)}{F(t)}
$$
\end{definition}
\begin{definition}
A bounded domain $D\subset M$ with $C^1$ boundary $\partial D$ is called
\emph{starlike} if there exists an interior point $x_0\in D$ such that
\begin{equation}
\langle\frac \partial {\partial r_{x_0}},\nu\rangle \big{|}_{\partial
D}\geq 0 \tag{6.1}
\end{equation}
where $\nu$ is the unit outer normal to $\partial D\, ,$ and the vector field $\frac \partial {\partial r_{x_0}}$ is the unit
vector field such that for any $x\in D\backslash \{x_0\} \cup \partial D\, ,$ $\frac \partial {\partial r_{x_0}}(x)$ is the unit vector tangent to the unique geodesic joining $%
x_{0}$ to $x$ and pointing away from $x_{0}\, .$
\end{definition}
It is obvious that any convex domain is starlike.
\begin{theorem} Suppose $M$ satisfies the same condition of
Theorem 4.1 and $D\subset M$ is a bounded starlike domain with
$C^1$ boundary. Assume that the $F$-lower degree $ l_F\geq 1/2$. If $\omega \in A^1(\xi
)$ satisfies an $F-$conservation law and annihilates any tangent
vector $\eta $ of $\partial D$, then $\omega $ vanishes on $D$.
\end{theorem}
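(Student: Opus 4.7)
The plan is to apply the fundamental integral formula (2.11) of Section 2 with the radial vector field $X=r\nabla r\, ,$ where $r$ denotes the distance function to the point $x_0$ that serves simultaneously as the pole of $M$ (from Theorem 4.1) and the interior starlike-center of $D$, and then bound the boundary side above by zero and the interior side below by a positive multiple of $\int_D F(|\omega|^2/2)\,dv_g\, .$ Since $\omega$ satisfies an $F$-conservation law, (2.11) gives
\[
\int_{\partial D} S_{F,\omega}(X,\nu)\,ds_g \;=\; \int_D \langle S_{F,\omega},\nabla\theta_X\rangle\,dv_g.
\]

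For the boundary term, I would use the annihilation hypothesis $\omega(\eta)=0$ for every $\eta\in T(\partial D)$. Decomposing any vector into its $\partial D$-tangent and $\nu$-normal parts, this gives $|\omega|^2 = |\omega(\nu)|_E^2$ on $\partial D$, and also $\omega(X)=r\,\langle\partial/\partial r,\nu\rangle\,\omega(\nu)\, .$ Substituting into the definition (2.6) of $S_{F,\omega}\, ,$ and using that for a $1$-form $(\omega\odot\omega)(X,\nu)=\langle \omega(X),\omega(\nu)\rangle_E\, ,$ a short computation yields, with $t:=|\omega(\nu)|^2/2\, ,$
\[
S_{F,\omega}(X,\nu) \;=\; r\,\langle \partial/\partial r,\nu\rangle\,\bigl(F(t)-2tF'(t)\bigr)\qquad\text{on }\partial D.
\]
The starlike assumption (6.1) provides $\langle\partial/\partial r,\nu\rangle\ge 0\, ,$ while the hypothesis $l_F\ge 1/2$ gives $tF'(t)\ge F(t)/2\, ,$ i.e.\ $F(t)-2tF'(t)\le 0$ for all $t\ge 0$. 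Hence $S_{F,\omega}(X,\nu)\le 0$ on $\partial D$, so the left-hand side of (2.11) is $\le 0$.

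For the interior term, Lemma 4.2 (applied with $p=1$) together with the three Hessian comparisons of Lemma 4.1 gives, exactly as in the proof of Theorem 4.1,
\[
\langle S_{F,\omega},\nabla\theta_X\rangle \;\ge\; \lambda\, F(|\omega|^2/2)\qquad\text{on }D,
\]
where $\lambda$ is the positive constant in (4.9); under each of the curvature assumptions (i)--(iii) of Theorem 4.1, one checks that $\lambda>0$. Combining the two estimates,
\[
0 \;\ge\; \int_{\partial D} S_{F,\omega}(X,\nu)\,ds_g \;=\; \int_D \langle S_{F,\omega},\nabla\theta_X\rangle\,dv_g \;\ge\; \lambda\int_D F(|\omega|^2/2)\,dv_g \;\ge\; 0,
\]
so $F(|\omega|^2/2)\equiv 0$ on $D$. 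Since $F$ is strictly increasing with $F(0)=0\, ,$ this forces $|\omega|^2\equiv 0\, ,$ hence $\omega\equiv 0$ on $D\, .$

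The only delicate point is the boundary identification $|\omega|^2=|\omega(\nu)|_E^2$ together with the precise computation of $(\omega\odot\omega)(X,\nu)$; every other ingredient is already packaged in (2.11), Lemma 4.2, and the inequalities derived in the proof of Theorem 4.1. Once the boundary integrand is brought to the form $r\langle\partial/\partial r,\nu\rangle(F-2tF')\, ,$ the conclusion falls out of the sign combination starlikeness $+$ $l_F\ge 1/2$.
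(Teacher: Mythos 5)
Your proposal is correct and follows essentially the same route as the paper's own proof: apply (2.11) with $X=r\nabla r$ based at the starlike center, use the interior lower bound $\langle S_{F,\omega},\nabla\theta_X\rangle\ge cF(\tfrac{|\omega|^2}{2})$ inherited from the proof of Theorem 4.1, and show the boundary integrand equals $r\langle\partial/\partial r,\nu\rangle\bigl(F-2tF'\bigr)\le 0$ via the annihilation hypothesis, starlikeness, and $l_F\ge 1/2$. Your boundary computation reproduces the paper's (6.3) exactly, so there is nothing to add.
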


\begin{proof} By assumption, there exists a point $x_0\in D$
such that the distance function $r_{x_0}$ satisfies (6.1). Take
$X=r\nabla r$, where $r=r_{x_0}$. From the proofs of Theorem 4.1,
we know that
\begin{equation}
\langle S_{F,\omega },\nabla \theta _X\rangle\geq
\, c F(\frac{|\omega |^2}2) \tag{6.2}
\end{equation}
where $c\, $ is a positive constant. Since $\omega \in A^1(\xi )$
annihilates any tangent vector $\eta$ of $\partial D$, we easily
derive the following on $\partial D$
\begin{equation}
\aligned S_{F,\omega }(X,\nu)&=rS_{F,\omega }(\frac
\partial
{\partial r},\nu) \\
&= r\big(F(\frac{|\omega |^2}2)\langle\frac \partial {\partial r},\nu
\rangle - F^{\prime }(\frac{|\omega |^2}2)\langle\omega (\frac
\partial
{\partial r}),\omega (\nu )\rangle\big)\\
&=r\langle\frac \partial {\partial r},\nu
\rangle \big(F(\frac{|\omega |^2}2)-F^{\prime }(\frac{|\omega |^2}2)|\omega |^2\big) \\
&\leq r\langle\frac \partial {\partial r},\nu \rangle
(1-2l_F)F(\frac{|\omega |^2}2)\leq 0
\endaligned \tag{6.3}
\end{equation}
From (2.11), (6.2) and (6.3), we have
$$
0 \le \int_D c F(\frac{|\omega |^2}2) dv \leq 0
$$
which implies that $\omega \equiv 0$.  \end{proof}

\begin{corollary} Suppose $M$ and $D$ satisfy the same
assumptions of Theorem 6.1. Let $u:\overline{D}\rightarrow N$ be
a $p-$harmonic map $($$p\geq 1$$)$ into an arbitrary Riemannian
manifold $N$. If $u|_{\partial D}$ is constant, then $u|_D$ is
constant.
\end{corollary}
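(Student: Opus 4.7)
The plan is to reduce the corollary to a direct application of Theorem 6.1 by choosing $F(t)=\tfrac{1}{p}(2t)^{p/2}$ and taking $\omega=du\in A^{1}(u^{-1}TN)$. Under this choice of $F$, a $p$-harmonic map is precisely an $F$-harmonic map in the sense of Section~2, so the known structural facts about $F$-harmonic maps apply.

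First I would verify the two analytic hypotheses on $\omega=du$. By Corollary~2.1 (the special case of $F$-harmonic maps), we have $\operatorname{div} S_{F,du}=0$, i.e.\ $du$ satisfies the $F$-conservation law required by Theorem~6.1. Next, the $F$-lower degree is computed directly: with $F(t)=\tfrac{1}{p}(2t)^{p/2}$ one has $F'(t)=(2t)^{p/2-1}$, hence
\begin{equation*}
\frac{tF'(t)}{F(t)}=\frac{p}{2}\qquad\text{for all } t>0,
\end{equation*}
so $l_{F}=p/2\geq 1/2$ since $p\geq 1$, matching the assumption $l_{F}\geq 1/2$ of Theorem~6.1.

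Second I would check the boundary condition. Since $u|_{\partial D}$ is constant, for any vector $\eta$ tangent to $\partial D$ at a point $x\in\partial D$ we have $du_{x}(\eta)=\tfrac{d}{dt}|_{t=0}u(\gamma(t))=0$ for any curve $\gamma$ in $\partial D$ with $\gamma'(0)=\eta$. Thus $\omega=du$ annihilates every tangent vector of $\partial D$, which is exactly the boundary hypothesis of Theorem~6.1.

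With these two checks in place, Theorem~6.1 applies to $\omega=du$ on the starlike domain $D$ and yields $du\equiv 0$ on $D$. Since $D$ is starlike (hence connected), $u$ is constant on $D$, and by continuity this constant coincides with the constant value of $u|_{\partial D}$. The only subtlety, and the step I expect to require the most care, is a clean verification that Corollary~2.1 does apply for the full range $p\geq 1$ (including $p=1$, where $F'$ is not $C^{2}$ at $0$), and that the interior-point $x_{0}$ demanded by Definition~6.2 is the same pole used to set $X=r\nabla r$ in the proof of Theorem~6.1; both issues are handled by working on $D\setminus\{x_{0}\}$ exactly as in the proof of Theorem~6.1 and then invoking continuity at $x_{0}$.
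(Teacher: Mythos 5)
Your proof is correct and follows essentially the same route as the paper: take $F(t)=\tfrac1p(2t)^{p/2}$ and $\omega=du$, note $l_F=p/2\ge 1/2$, observe that a constant boundary value forces $du$ to annihilate tangent vectors of $\partial D$, and apply Theorem 6.1. The paper's own proof is just a terser version of this (it cites the $F$-conservation law for $F$-harmonic maps via Corollary 2.2 rather than Corollary 2.1, but that is the same fact), so no further comment is needed.
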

\begin{proof} For a $p-$harmonic map $u$, we have $F(t)=\frac 1p
(2t)^{\frac p2}$. Obviously $d_F=l_F=\frac p2$. Take $\omega =du$. This
corollary follows immediately from Theorem 6.1.
\end{proof}

\begin{remark}When $M=\Bbb{R}^m$ and $D=B_\rho(x_0)$, this
result, Corollary 6.1, recaptures the work of Karcher and Wood on the constant Dirichlet boundary-value
problem for harmonic maps [KW]. The result of
Karcher and Wood was also generalized to harmonic maps with
potential by Chen [Ch] and $p-$harmonic maps with potential by Liu
[Li2] for disc domains.
\end{remark}

\section{Extended Born-Infeld fields and Exact Forms}

In this section, we will establish Liouville type theorems for
solutions of the extended Born-Infeld equations (1.5) and (1.6)
proposed by [Ya]. Using Hodge star operator $\ast\, ,$ we can rewrite the
equations (1.5) and (1.6) as
\begin{equation}
d*\left( \frac{d\omega }{\sqrt{1+|d\omega |^2}}\right) =0,\qquad \omega
\in A^p(\mathbb{R}^m)
\tag{7.1}
\end{equation}
and
\begin{equation}
d*\left( \frac{d\sigma }{\sqrt{1-|d\sigma |^2}}\right) =0,\qquad \sigma
\in A^q(\mathbb{R}^m)
\tag{7.2}
\end{equation}
respectively. As pointed out in the introduction, the solutions of
(7.1) and (7.2) are critical points of the $E_{BI}^{+}$-energy functional and the $E
_{BI}^{-}$-energy functional respectively. Notice that the $E_{BI}^{+}$-energy functional and the $E_{BI}^{-}$-energy functional are $E_{F,g}-$energy
functionals with $F(t)=\sqrt{1+2t}-1$ ($t\in [0,+\infty )$), and $F(t)=1-\sqrt{1-2t}$
($t\in [0,1/2)$) respectively.
\begin{definition}
The \emph{extended Born-Infeld energy functional with the plus sign on a manifold} $M$ is the mapping $E_{BI}^{+} : A^p(M)\to \mathbb{R}^+\, $ given by
\begin{equation}
E_{BI}^{+}(\omega )=\int_{M} \sqrt{1+|d\omega |^2}-1\quad dv
\tag{7.3}
\end{equation}
and the \emph{extended Born-Infeld energy functional with the minus sign on a manifold} $M$ is the mapping $E_{BI}^{-} : A^q(M) \to \mathbb{R}^+\, $ given by
\begin{equation}
E_{BI}^{-}(\sigma )=\int_{M} 1-\sqrt{1-|d\sigma |^2}\quad dv
\tag{7.4}
\end{equation}
A critical point $\omega $ of
$E_{BI}^{+}$ $($resp. $\sigma$ of $E_{BI}^{-}$$)$ with respect to any compactly supported variation is called an extended
\emph{Born-Infeld field with the plus sign} $($resp. \emph{with the
minus sign}$)$ on a manifold.
\end{definition}

Obviously Corollary 2.1 implies that the
solutions of (7.1) and (7.2) satisfy $F-$conservation laws.

Now we recall the equivalence between (7.1) and (7.2) found by
[Ya] as follows: Let $\omega \in A^p(\Bbb{R}^m)$ be a solution of
(7.1) with $0\leq p\leq m-2$. Then
\begin{equation}
\tau =\pm *\left(\frac{d\omega }{\sqrt{1+|d\omega |^2}}\right)  \tag{7.5}
\end{equation}
is a closed $(m-p-1)-$form. Since the de Rham cohomology group $H^{m-p-1}(\Bbb{R}^m)=0$, there
exists an $(m-p-2)-$form $\sigma $ such that $\tau =d\sigma $. It
is easy to derive from (7.5) the following
\begin{equation}
|d\omega |^2=\frac{|d\sigma |^2}{1-|d\sigma |^2}  \tag{7.6}
\end{equation}
and
\begin{equation}
d\omega =\pm (-1)^{p(m-p)}*\frac{d\sigma }{\sqrt{1-|d\sigma
|^2}} \tag{7.7}
\end{equation}
The Poincar\'e Lemma implies that $\sigma $ satisfies (7.2) with
$q=m-p-2$. Using (7.6), we get
\begin{equation}
\aligned 1-\sqrt{1-|d\sigma |^2}&=\frac{\sqrt{1+|d\omega
|^2}-1}{\sqrt{1+|d\omega |^2}}\\
&\leq \sqrt{1+|d\omega |^2}-1
\endaligned \tag{7.8}
\end{equation}
Conversely, a solution $\sigma $ of (7.2) with $0\leq q\leq m-2$
gives us a solution $\omega \in A^p(\Bbb{R}^m)$ of (7.1) with
$p=m-q-2$, and
\begin{equation}
\sqrt{1+|d\omega |^2}-1=\frac{1-\sqrt{1-|d\sigma
|^2}}{\sqrt{1-|d\sigma |^2}} \tag{7.9}
\end{equation}

Let's first consider the equation (7.1) and let $\omega $ be a
solution of (7.1). Choose an orthonormal basis $\omega
_1,...,\omega _k$ of $A^p(\Bbb{R}^m)$ consisting of constant
differential forms, where
$
k=\begin{pmatrix}
  m \\
  p \\
\end{pmatrix}\, ,
$ and for each $1 \le \alpha \le k\, ,$
$$
\aligned
\omega _\alpha =dx_{j_1}\wedge \cdots \wedge dx_{j_p}
\endaligned
$$
for some $1\leq j_1<\cdots <j_p\leq m\, .$
Then we may write $\omega
=\sum_{\alpha
=1}^kf^\alpha \omega _\alpha $. So $\omega $ may be regarded as a map $%
\omega :\Bbb{R}^m\rightarrow \Lambda ^p(\Bbb{R}^m)\simeq \Bbb{R}^k$ where $
k=\begin{pmatrix}
  m \\
  p \\
\end{pmatrix}\, .
$ Let $%
M=(x,\omega (x))$ be the graph of $\omega $ in $\mathbb{R}^{m+k}$ and let
$G(\rho)$ be the extrinsic ball of radius $\rho$ of the graph centered
at the origin of $\Bbb{R}^{m+k}$ given by

$$
G(\rho)=M\cap B^{m+k}(\rho)
$$
Set
$$
\omega _\rho=\sum_{\alpha =1}^kf_\rho^\alpha \omega _\alpha
$$
where
\begin{equation}
f_\rho^{\ \alpha }(x)=\left\{
\begin{array}{cc}
\rho\quad &\text{if }f^{\ \alpha }>\rho \\
f^\alpha (x)\quad &\text{if }|f^\alpha (x)|\leq \rho \\
-\rho\quad &\text{if }f^{\ \alpha }<\rho
\end{array}
\right. \tag{7.10}
\end{equation}
For any $\delta >0$, let $\phi $ be a nonnegative cut-off function
defined on $\Bbb{R}^m$ given by
\begin{equation}
\phi =\left\{
\begin{array}{cc}
1 &\text{on }B^m(\rho)\qquad \qquad \\
\frac{(1+\delta )\rho-r(x)}{\delta \rho} &\text{on }B^m((1+\delta
)\rho)\backslash B^n(\rho) \\
0 &\text{on }\mathbb{R}^m\backslash B^m((1+\delta )\rho)
\end{array}
\right. \tag{7.11}
\end{equation}

\begin{proposition} Let $\omega \in A^p(\Bbb{R}^m)$ be an extended
Born-Infeld field with the plus sign on $\Bbb{R}^m$. Then the Born-Infeld type energy of $\omega$ over $G(\rho)$
satisfies the upper bound
$$
E_{BI}^{+}(\omega;G(\rho))\leq m\sqrt{k}\omega _m\rho^m
$$
where $
k=\begin{pmatrix}
  m \\
  p \\
\end{pmatrix}\,
$ and $\omega_m$ is the volume of the unit ball in $\mathbb{R}^m$.
\end{proposition}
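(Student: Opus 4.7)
The plan is to estimate $E_{BI}^+(\omega;G(\rho))$ via integration by parts against the conservation law $\delta V=0$ satisfied by the Born--Infeld flux $V:=d\omega/\sqrt{1+|d\omega|^2}$ (which follows from Corollary 2.1 applied to $F(t)=\sqrt{1+2t}-1$), using the radial cutoff $\phi$ from (7.11) and the componentwise truncation $\omega_\rho$ from (7.10) as building blocks for the test form. The starting point is the pointwise algebraic identity
\[
\sqrt{1+|d\omega|^2}-1 \;=\; \frac{|d\omega|^2}{1+\sqrt{1+|d\omega|^2}} \;\le\; \frac{|d\omega|^2}{\sqrt{1+|d\omega|^2}} \;=\; \langle V,d\omega\rangle,
\]
combined with the observation that on $\pi(G(\rho))=\{x\in\mathbb{R}^m:|x|^2+|\omega(x)|^2\le\rho^2\}$ one has $\phi\equiv 1$ (because $|x|\le\rho$) and $\omega_\rho\equiv\omega$ (because $|\omega|\le\rho$ forces each $|f^\alpha|\le\rho$), so that $E_{BI}^+(\omega;G(\rho))\le\int_{\pi(G(\rho))}\langle V,d\omega\rangle\,dx$.

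Applying the conservation law to the compactly supported test $p$-form $\phi\omega_\rho$ and expanding $d(\phi\omega_\rho)=d\phi\wedge\omega_\rho+\phi\,d\omega_\rho$ yields
\[
\int_{\mathbb{R}^m}\phi\,\langle V,d\omega_\rho\rangle\,dx \;=\; -\int_{\mathbb{R}^m}\langle V,d\phi\wedge\omega_\rho\rangle\,dx \;=\; -\int_{\mathbb{R}^m}\langle i_{\nabla\phi}V,\omega_\rho\rangle\,dx.
\]
The right-hand side is bounded in absolute value by first noting the pointwise inequalities $|V|\le 1$ (trivially from $|V|^2=|d\omega|^2/(1+|d\omega|^2)$), $|i_{\nabla\phi}V|\le|\nabla\phi|\,|V|$, $|\nabla\phi|\le 1/(\delta\rho)$, and $|\omega_\rho|^2=\sum_\alpha(f^\alpha_\rho)^2\le k\rho^2$, then integrating over $\operatorname{supp}(\nabla\phi)\subset B^m((1+\delta)\rho)\setminus B^m(\rho)$, which has volume $\omega_m[(1+\delta)^m-1]\rho^m$. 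This produces
\[
\left|\int\langle i_{\nabla\phi}V,\omega_\rho\rangle\,dx\right| \;\le\; \frac{\sqrt{k}\,\omega_m\,[(1+\delta)^m-1]\,\rho^m}{\delta},
\]
and passing to the limit $\delta\to 0^+$ (using $[(1+\delta)^m-1]/\delta\to m$) gives exactly the target constant $m\sqrt{k}\,\omega_m\rho^m$.

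The main technical obstacle is the intermediate step of showing that the full integral $\int\phi\,\langle V,d\omega_\rho\rangle\,dx$ actually dominates $\int_{\pi(G(\rho))}\langle V,d\omega\rangle\,dx$ from above, equivalently that the remainder $\int_{\operatorname{supp}\phi\setminus\pi(G(\rho))}\phi\,\langle V,d\omega_\rho\rangle\,dx$ is non-negative (or at least does not spoil the sharp constant). In the function case $p=0$ this is automatic: $d\omega_\rho=\chi_{\{|f|<\rho\}}df$ forces $\langle V,d\omega_\rho\rangle=\chi_{\{|f|<\rho\}}|df|^2/\sqrt{1+|df|^2}\ge 0$. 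For $p\ge 1$ the componentwise truncation produces cross terms $\langle df^\alpha\wedge\omega_\alpha,\,df^\beta\wedge\omega_\beta\rangle$ with $\alpha\ne\beta$ whose signs are not manifest on the set where some $|f^\alpha|>\rho$. The natural remedy, and where the bulk of the careful work lies, is to decompose the identity componentwise by applying the conservation law to each test form $\phi f^\alpha_\rho\omega_\alpha$ separately, summing, and exploiting the fact that $df^\alpha_\rho$ vanishes exactly on $\{|f^\alpha|>\rho\}$ to reduce the sign analysis to one-dimensional truncations per component, in which the scalar non-negativity already used for $p=0$ suffices.
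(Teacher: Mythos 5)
Your argument is, up to the point you yourself flag, the paper's own proof: the same test form $\phi\omega_\rho$, the same cutoff $\phi$, the same identity $\int\phi\langle V,d\omega_\rho\rangle\,dx=-\int\langle V,d\phi\wedge\omega_\rho\rangle\,dx$, the same pointwise bounds $|V|\le1$, $|\nabla\phi|\le1/(\delta\rho)$, $|\omega_\rho|\le\sqrt{k}\,\rho$, and the same limit $\delta\to0^+$ producing $m\sqrt{k}\,\omega_m\rho^m$. All of that is correct. You have also correctly located the one delicate step, which the paper passes over in silence: to convert the bound on $\int\phi\langle V,d\omega_\rho\rangle\,dx$ into a bound on $\int_{\pi(G(\rho))}\langle V,d\omega\rangle\,dx$ one must discard the contribution of $\phi\langle V,d\omega_\rho\rangle$ over the region where the truncation is active, and for $p\ge1$ that contribution is not manifestly nonnegative.

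The repair you sketch does not close this. Applying the conservation law to each test form $\phi f^\alpha_\rho\omega_\alpha$ and summing over $\alpha$ merely reassembles the identity you started from, since $\sum_\alpha d(\phi f^\alpha_\rho)\wedge\omega_\alpha=d(\phi\omega_\rho)$; nothing new is gained. More importantly, the individual integrands $\langle V,df^\alpha_\rho\wedge\omega_\alpha\rangle$ are not one-dimensional objects: $V$ carries all components of $d\omega$, so each such term still contains the cross pairings $\langle df^\beta\wedge\omega_\beta,\,df^\alpha\wedge\omega_\alpha\rangle$ with $\beta\ne\alpha$. These are nonzero and of indefinite sign whenever $\{i\}\cup J_\alpha=\{j\}\cup J_\beta$ as index sets --- for instance $\omega_\alpha=dx_1$, $\omega_\beta=dx_2$ give $\partial_2f^\alpha\,dx_2\wedge dx_1$ pairing with $\partial_1f^\beta\,dx_1\wedge dx_2$ to yield $-\partial_2f^\alpha\,\partial_1f^\beta$. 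So the scalar nonnegativity that settles the case $p=0$ is not recovered componentwise, and the sign problem survives your decomposition intact. In fairness, the paper's printed proof has exactly the same lacuna for $p\ge1$: it deduces $\int_{B^m(\rho)\cap\{|f^\alpha|\le\rho\}}|d\omega|^2/\sqrt{1+|d\omega|^2}\,dx\le(\sqrt{k}/\delta)\,\mathrm{Vol}(B^m((1+\delta)\rho)\setminus B^m(\rho))$ from the preceding line as if the discarded part of the integral were nonnegative. Your proposal therefore reproduces the paper's argument, including its soft spot, and honestly flags that spot --- but the proposed componentwise fix does not yet supply the missing positivity.
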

\begin{proof} Taking inner product with $\phi \omega _\rho\, ,$ we may get
from (1.5) or (7.1) that
$$
\aligned
0 &=\int_{\Bbb{R}^m} \langle d^{*}(\frac{d\omega }{\sqrt{1+|d\omega |^2}}),\phi \omega _\rho\rangle \, dx \\
&=\int_{\Bbb{R}^m} \langle\frac{d\omega }{\sqrt{1+|d\omega |^2}},d(\phi \omega _\rho)\rangle \, dx \\
&=\int_{\Bbb{R}^m} \langle\frac{d\omega }{\sqrt{1+|d\omega |^2}},d\phi \wedge
\omega _\rho\rangle \, dx + \int_{\Bbb{R}^m} \phi \langle\frac{d\omega
}{\sqrt{1+|d\omega |^2}},d\omega _\rho\rangle \, dx \endaligned
$$
Using the fact that $|d\phi |=|\nabla \phi |  \leq \frac 1{\delta
\rho}$ and $|\omega _\rho|\leq \sqrt{k}\rho$, we have
$$
\aligned \int_{B^m((1+\delta )\rho)}\phi &\langle\frac{d\omega
}{\sqrt{1+|d\omega |^2}} ,d\omega _\rho\rangle \, \, dx \leq
\int_{B^m((1+\delta
)\rho)}\frac{|d\phi ||\omega_\rho||d\omega |}{\sqrt{1+|d\omega |^2}} \, \, dx \\
&\leq \frac{\sqrt{k}}\delta \text{Vol}\big( B^m((1+\delta )\rho)-B^m(\rho))\big)
\endaligned
$$
So
$$
\int_{B^m(\rho)\cap \{|f^\alpha |\leq \rho\}}\frac{|d\omega
|^2}{\sqrt{ 1+|d\omega |^2}} \, dx \leq \frac{\sqrt{k}}\delta
\text{Vol}\big(B^m((1+\delta )\rho)-B^m(\rho))\big)
$$
Because $G(\rho)\subset M\cap (B^m(\rho)\times [-\rho,\rho])$, we have
$$
\aligned E_{BI}^{+}(\omega;G(\rho)) &\leq \int_{B^m(\rho)\cap \{|f^\alpha |\leq
\rho\}}
\sqrt{1+|d\omega |^2}-1 \quad dx \\
&\leq \int_{B^m(\rho)\cap \{|f^\alpha |\leq \rho\}}\sqrt{1+|d\omega |^2}\quad dx
-Vol(B^m(\rho)\cap \{|f^\alpha |\leq \rho\})  \\
&\leq \int_{B^m(\rho)\cap \{|f^\alpha |\leq \rho\}}\frac{|d\omega
|^2+1}{\sqrt{
1+|d\omega |^2}}\quad dx - Vol(B^m(\rho)\cap \{|f^\alpha |\leq \rho\}) \\
&\leq \frac{\sqrt{k}}\delta Vol\big(B^m((1+\delta )\rho)-B^m(\rho))\big)\\
& = \frac{\sqrt{k}}\delta \omega_m \big((1+\delta )^m \rho^m - \rho^m\big)
\endaligned
$$
Let $\delta \rightarrow 0$, we have
$$
E_{BI}^{+}(\omega;G(\rho))\leq m\sqrt{k}\omega _m\rho^m
$$
\end{proof}

\begin{remark} When $\omega =f\in A^0(\Bbb{R}^m)=C^\infty
(\Bbb{R}^m)$, the above result is the volume estimate for the
minimal graph of $f$ (cf. [LW]).
\end{remark}

\begin{lemma}

$($i$)$ If $F(t)=\sqrt{1+2t}-1$ with $t\in
[0,+\infty )$, then $d_F=1$ and $ l_F=1/2$.

$($ii$)$ If $F(t)=1-\sqrt{1-2t}$ with $t\in [0,1/2)$, then
$d_F=+\infty $ and $ l_F=1$.
\end{lemma}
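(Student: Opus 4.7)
The plan is a direct computation: for each choice of $F$ I will write down $F'(t)$, form the ratio $R(t):=\tfrac{tF'(t)}{F(t)}$, rationalize it to a simple monotone expression, and then read off the supremum and infimum on the appropriate interval. There is essentially no geometric content; the only subtlety is the boundary behavior at $t=0$ (where $F(0)=0$) and, in case (ii), at $t=\tfrac12$ (where $F$ blows up).

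For case (i), with $F(t)=\sqrt{1+2t}-1$ and $F'(t)=1/\sqrt{1+2t}$, I would multiply numerator and denominator of $R(t)$ by $\sqrt{1+2t}+1$ to obtain
\begin{equation*}
R(t)=\frac{t}{\sqrt{1+2t}\,(\sqrt{1+2t}-1)}=\frac{1}{2}+\frac{1}{2\sqrt{1+2t}}.
\end{equation*}
At $t=0$ this limit equals $1$ (consistent with the Taylor expansion $F(t)=t-t^{2}/2+\cdots$, giving $R(0^{+})=1$), and $R$ is strictly decreasing in $t$ with $R(t)\to \tfrac12$ as $t\to\infty$. Hence $d_F=\sup R=1$ and $l_F=\inf R=1/2$.

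For case (ii), with $F(t)=1-\sqrt{1-2t}$ and $F'(t)=1/\sqrt{1-2t}$, the same rationalization trick (multiplying by $1+\sqrt{1-2t}$) gives
\begin{equation*}
R(t)=\frac{t}{\sqrt{1-2t}\,(1-\sqrt{1-2t})}=\frac{1}{2}+\frac{1}{2\sqrt{1-2t}}.
\end{equation*}
This is strictly increasing on $[0,1/2)$, with $R(0^{+})=1$ and $R(t)\to +\infty$ as $t\to 1/2^{-}$. Therefore $l_F=\inf R=1$ and $d_F=\sup R=+\infty$.

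The only thing to be a little careful about is that the definitions of $d_F$ and $l_F$ involve the indeterminate form $0/0$ at $t=0$; in both cases this is resolved by L'Hospital's rule (or by the Taylor expansions $F(t)=t\mp t^{2}/2+\cdots$), yielding $R(0^{+})=1$. Neither of these endpoint values is the sup in case (i) nor the inf in case (ii); the values claimed are attained in the appropriate limits noted above. This completes the verification.
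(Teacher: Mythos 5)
Your computation is correct and matches the paper's proof exactly: both rationalize $tF'(t)/F(t)$ to $\tfrac12+\tfrac{1}{2\sqrt{1\pm 2t}}$ and read off the supremum and infimum from monotonicity. One slip in your closing remark: the value $R(0^{+})=1$ \emph{is} the sup in case (i) and \emph{is} the inf in case (ii) — it is rather the inf in (i) and the sup in (ii) that are only attained in the limits $t\to\infty$ and $t\to\tfrac12^{-}$ — but this does not affect your (correct) conclusions.
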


\begin{proof}(i) For $F(t)=\sqrt{1+2t}-1$, we have
\[
\aligned
\frac{tF^{\prime }(t)}{F(t)}&=\frac t{\sqrt{1+2t}(\sqrt{1+2t}-1)} \\
&=\frac{\sqrt{1+2t}+1}{2\sqrt{1+2t}} \quad \operatorname{for}\quad t \in (0, +\infty )
\endaligned
\]
Hence,
\begin{equation}
\aligned
\frac{tF^{\prime }(t)}{F(t)}&= \frac 12 + \frac{1}{2\sqrt{1+2t}} \quad \operatorname{for}\quad t \in [0, +\infty )\, .
\endaligned\tag{7.12}
\end{equation}
By definition, we get $d_F=1$ and $l_F=1/2$.

(ii) For $F(t)=1-\sqrt{1-2t}$, we have
\[
\aligned
\frac{tF^{\prime }(t)}{F(t)}&=\frac t{\sqrt{1-2t}(1-\sqrt{1-2t})} \\
&=\frac{1+\sqrt{1-2t}}{2\sqrt{1-2t}} \quad \operatorname{for}\quad t \in (0, \frac12)\, .
\endaligned
\]
Hence,
\begin{equation}
\aligned
\frac{tF^{\prime }(t)}{F(t)}& =\frac 12 + \frac{1}{2\sqrt{1-2t}}\quad \operatorname{for}\quad t \in [0, \frac12)
\endaligned\tag{7.13}
\end{equation}
By definition, we obtain $d_F=+\infty $ and $l_F=1$.
\end{proof}

By applying Corollary 4.1 to $M=\Bbb{R}^m$ and $F=\sqrt{1+2t}-1$,
we immediately get the following:

\begin{theorem} Let $\omega \in A^p(\Bbb{R}^m)$ be an
extended Born-Infeld field with the plus sign on $\mathbb{R}^m$. If $m>2p$ and $\omega$ satisfies the
following growth condition
$$
\left. \int_{B_\rho(x_0)} \sqrt{1+|d\omega
|^2}-1 \quad dx =o(\rho^{m-2p})\right.\quad \text{as
} \rho\rightarrow \infty
$$
for some point $x_0\in \Bbb{R}^m$, then $d\omega =0\, ,$ and $\omega$ is exact. In
particular, if $\omega $ has finite $E_{BI}^{+}-$energy, then
$\omega$ is exact.
\end{theorem}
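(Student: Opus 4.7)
The plan is to apply the general vanishing theorem (Theorem~5.1) on the flat space $\mathbb{R}^m$, treating the $(p+1)$-form $d\omega$ (not $\omega$ itself) as the form whose $F$-conservation law we exploit, with $F(t)=\sqrt{1+2t}-1$.

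First I would verify that $d\omega$ satisfies an $F$-conservation law. Because $\omega$ is a critical point of $E^{+}_{BI}$, which coincides with the $E_{F,g}$-energy functional for this choice of $F$, Lemma~2.5 gives $\tau_F(\omega) = -\delta\bigl(F'(\tfrac{|d\omega|^2}{2})\, d\omega\bigr) = 0$. Since $d^{2} = 0$ on scalar-valued forms over $\mathbb{R}^m$, Corollary~2.1 then yields $\operatorname{div}\, S_{F,d\omega} \equiv 0$, which is precisely the $F$-conservation law required as input for Theorem~5.1.

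Second, Lemma~7.1(i) records that $d_F = 1$ for this $F$. Flat $\mathbb{R}^m$ falls under case~(ii) of Theorem~4.1 (radial curvature $K_r = 0$), and the resulting monotonicity exponent from Corollary~4.1, applied to the $(p+1)$-form $d\omega$, is $\lambda = m - 2(p+1)d_F = m - 2(p+1)$. Feeding the growth hypothesis $\int_{B_\rho(x_0)} (\sqrt{1+|d\omega|^2}-1)\, dx = \int_{B_\rho(x_0)} F(\tfrac{|d\omega|^2}{2})\, dx = o(\rho^{\lambda})$ (automatic in the finite-$E^{+}_{BI}$-energy case, where the integral is bounded) into Theorem~5.1 forces $F(\tfrac{|d\omega|^2}{2}) \equiv 0$, and strict monotonicity of $F$ with $F(0)=0$ then gives $d\omega \equiv 0$ on $\mathbb{R}^m$.

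Exactness of $\omega$ follows instantly from the Poincar\'e Lemma on the contractible space $\mathbb{R}^m$: every closed form of positive degree is exact (for $p=0$ this reads that $\omega$ is a constant function). The only real subtlety is the bookkeeping of degrees: the ``$p$'' appearing inside Theorem~5.1 and Corollary~4.1 refers to the degree of the form actually being tested, so here it must be read as $p+1$, the degree of $d\omega$, rather than the degree $p$ of $\omega$ itself; once this is noted, the proof reduces to a one-line appeal to Section~5.
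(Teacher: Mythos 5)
Your overall strategy --- derive an $F$-conservation law for $d\omega$ from Lemma 2.5, Remark 2.2 and Corollary 2.1, then invoke the flat-space case of Corollary 4.1 / Theorem 5.1 with $F(t)=\sqrt{1+2t}-1$ and $d_F=1$, and finish with the Poincar\'e lemma --- is exactly the route the paper takes (its entire proof is the sentence preceding the theorem: ``By applying Corollary 4.1 to $M=\mathbb{R}^m$ and $F=\sqrt{1+2t}-1$, we immediately get the following''). The conservation-law step and the exactness step are fine.

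The gap is in the last step, and it sits precisely in the degree bookkeeping you single out as ``the only real subtlety.'' Reading the degree as $p+1$ (the degree of $d\omega$), as you do, Corollary 4.1 gives the monotonicity exponent $\lambda=m-2(p+1)d_F=m-2p-2$, so Theorem 5.1 needs the input $\int_{B_\rho(x_0)}F(\tfrac{|d\omega|^2}{2})\,dx=o(\rho^{\,m-2p-2})$. The theorem's hypothesis is only $o(\rho^{\,m-2p})$, which is strictly weaker: an energy growing like $\rho^{\,m-2p-1}$ satisfies the hypothesis while remaining compatible with $\rho^{-(m-2p-2)}\int_{B_\rho}F$ nondecreasing and positive. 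So ``feeding the growth hypothesis into Theorem 5.1'' does not close; what your argument actually proves is the variant with hypothesis $o(\rho^{\,m-2p-2})$ and $m>2p+2$, and likewise the finite-energy case only under $m>2p+2$. The paper, by contrast, applies Corollary 4.1 with exponent $m-2pd_F=m-2p$, i.e.\ using the degree $p$ of the potential $\omega$ rather than the degree $p+1$ of $d\omega$ (the same convention appears in (7.18), where $\operatorname{trace}(d\sigma\odot d\sigma)$ is written as $q|d\sigma|^2$ for $\sigma\in A^q$). On the letter of Lemma 4.2 --- whose key inequality bounds the trace of $\eta\odot\eta$ by the degree of $\eta$ times $|\eta|^2$ --- your count is the correct one: a direct computation on $\mathbb{R}^m$ gives $\operatorname{trace}S_{F,d\omega}=\bigl(m-(p+1)(1+(1+|d\omega|^2)^{-1/2})\bigr)F$, and since the factor $1+(1+|d\omega|^2)^{-1/2}$ tends to $2$ as $|d\omega|\to0$, no estimate of this kind yields the exponent $m-2p$. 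Either the hypothesis must be strengthened to $o(\rho^{\,m-2p-2})$, or a genuinely different argument is needed to justify $m-2p$; the proposal as written supplies neither, so it does not prove the statement in the form given.
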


\begin{remark} In [SiSiYa], the authors proved the following:
Let $\omega $ be a solution of (7.1). If $d\omega \in
L^2(\Bbb{R}^m)$ ($m\geq 3$) or $d\omega \in L^2(\Bbb{R}^2)\cap
\mathcal{H}^{1}$ on $\Bbb{R}^2$, where $\mathcal{H}^1$ is the Hardy space, then $d\omega \equiv 0$. In view of the
inequality $\sqrt{1+t^2}-1\leq \frac{t^2}2\, $ for any $t \ge 0$, it is clear that
being in $L^2$ ensures finite $E_{BI}^{+}-$energy.
\end{remark}

Using the duality between solutions of (7.1) and (7.2), we have

\begin{proposition} Let $\sigma \in A^q(\Bbb{R}^m)$ be a
$q-$form with $\frac{m-4}2<q< m-2$. If $ \sigma $ is an
extended Born-Infeld field with the minus sign on $\mathbb{R}^m\, ,$ and $\sigma $ satisfies the following growth
\begin{equation}
\int_{B_\rho(x_0)}\frac{1-\sqrt{1-|d\sigma |^2}}{\sqrt{1-|d\sigma
|^2}} \, dx = o(\rho^{2q-m+4})\quad \text{as
} \rho\rightarrow \infty  \tag{7.14}
\end{equation}
then $d\sigma =0\, ,$ and $\sigma$ is exact. In particular, if $\sigma $ has finite $E
_{BI}^{-}-$energy, then $\sigma$ is exact.
\end{proposition}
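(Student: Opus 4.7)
The plan is to reduce Proposition 7.2 to Theorem 7.1 via Yang's duality (7.5)--(7.9) between solutions of the extended Born--Infeld equations (7.1) and (7.2). Since $\sigma\in A^q(\mathbb{R}^m)$ is an extended Born--Infeld field with the minus sign it satisfies (7.2), so the construction converse to the one recalled in (7.5)--(7.7) produces a $p$-form $\omega\in A^p(\mathbb{R}^m)$ with $p=m-q-2$ solving (7.1), together with the pointwise identities (7.6) and (7.9).

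First I would verify the hypotheses of Theorem 7.1 on $\omega$. The assumption $\tfrac{m-4}{2}<q<m-2$ becomes $0<p<\tfrac{m}{2}$; in particular $m>2p$. From (7.9),
\[
\sqrt{1+|d\omega|^2}-1=\frac{1-\sqrt{1-|d\sigma|^2}}{\sqrt{1-|d\sigma|^2}},
\]
so, noting the elementary identity $2q-m+4=m-2p$, hypothesis (7.14) becomes exactly
\[
\int_{B_\rho(x_0)}\bigl(\sqrt{1+|d\omega|^2}-1\bigr)\,dx=o\!\left(\rho^{m-2p}\right)\quad\text{as }\rho\to\infty,
\]
which is the growth hypothesis of Theorem 7.1 applied to $\omega$. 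That theorem then yields $d\omega\equiv 0$.

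Finally, the dual relation (7.6), namely $|d\omega|^2=|d\sigma|^2/(1-|d\sigma|^2)$, forces $d\sigma\equiv 0$, and since the hypothesis on $q$ ensures $q\ge 1$ whenever $m\ge 4$, the Poincar\'e Lemma on $\mathbb{R}^m$ delivers the exactness of $\sigma$. The ``in particular'' clause is handled the same way: under the standing Born--Infeld regime $|d\sigma|<1$, finite $E_{BI}^{-}$-energy of $\sigma$ is transported via (7.9) to finite $E_{BI}^{+}$-energy of $\omega$, to which Theorem 7.1 applies directly. The only bookkeeping is the exponent identity $2q-m+4=m-2p$ and the degree inequality $m>2p$, both immediate from the hypothesis on $q$; I do not expect any serious obstacle, since Theorem 7.1 supplies all the analytic Liouville input.
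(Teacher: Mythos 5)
Your proposal is correct and follows essentially the same route as the paper: invoke Yang's duality to produce the dual solution $\omega\in A^{p}(\mathbb{R}^m)$ with $p=m-q-2$, translate the hypothesis $\frac{m-4}{2}<q<m-2$ into $m>2p$ and the growth condition (7.14) into the hypothesis of Theorem 7.1 via (7.9) and the identity $2q-m+4=m-2p$, and then pull back $d\omega=0$ to $d\sigma=0$ through (7.6). The only difference is cosmetic bookkeeping (your explicit remark on the Poincar\'e Lemma for exactness), which the paper leaves implicit.
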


\begin{proof} By the duality between (7.1) and (7.2), we get a
solution $\omega $ from the solution $\sigma $ of (7.2), where
$\omega \ $ satisfies (7.1) and (7.9). Since $p=m-q-2$, the condition
$q>\frac{m-4}2$ is equivalent to $m>2p$. Obviously (7.9) and
(7.14) imply
$$
\int_{B_\rho(x_0)} \sqrt{1+|d\omega |^2}-1 \quad dx =o(\rho^{m-2p})\quad \text{as
} \rho\rightarrow \infty
$$
Therefore Theorem 7.1 implies that $d\omega =0$ which is
equivalent to $ d\sigma =0$.
\end{proof}

\begin{proposition} Let $\sigma \in A^q(\Bbb{R}^m)$ be a
$q-$form with $q < \frac{m-2}{2}$. Suppose that $\sigma $ is an
extended Born-Infeld field with the minus sign on $\mathbb{R}^m\, ,$ satisfying
\begin{equation}
|d\sigma |^2\leq 1-\frac{(q+1)^2}{(m-q-1)^2}
\tag{7.15}
\end{equation}
Then
\begin{equation}
\frac 1{\rho _1^{\frac {m}{q+1} }}\int_{B_{\rho
_1}(x_0)} 1-\sqrt{1-|d\sigma |^2}  \quad dx \leq \frac 1{\rho
_2^{\frac {m}{q+1} }}\int_{B_{\rho _2}(x_0)} 1-\sqrt{ 1-|d\sigma
|^2} \quad dx \tag{7.16}
\end{equation}
for any $0<\rho _1\leq \rho _2$.
\end{proposition}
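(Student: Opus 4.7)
The plan is to apply the monotonicity framework of Section 4 to the $(q+1)$-form $\omega=d\sigma$ on $\mathbb{R}^m$ with $F(t)=1-\sqrt{1-2t}$. First, I verify the $F$-conservation law: since $\sigma$ is extremal for $E_{BI}^{-}=E_{F,g}$, Lemma 2.5 gives $\tau_F(\sigma)=0$; combined with $(d^\nabla)^2\sigma=d^2\sigma=0$ on $\mathbb{R}^m$ (trivial bundle), Corollary 2.1 yields $\operatorname{div} S_{F,\omega}\equiv 0$.

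Second, I must control the effective $F$-degree on the relevant range. Although $d_F=+\infty$ on the full interval $[0,1/2)$ by Lemma 7.1(ii), the hypothesis $|d\sigma|^2\le 1-(q+1)^2/(m-q-1)^2$ restricts $t=|d\sigma|^2/2$ to $[0,t_0]$ with $\sqrt{1-2t_0}=(q+1)/(m-q-1)$, and by the identity (7.13),
$$\sup_{t\in[0,t_0]}\frac{tF'(t)}{F(t)}=\frac{1}{2}+\frac{m-q-1}{2(q+1)}=\frac{m}{2(q+1)}.$$
This finite effective $d_F$ is what feeds into the stress-energy estimates of Lemma 4.2.

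Third, I apply the integral formula (2.11) on $\mathbb{R}^m$ with $X=r\,\partial/\partial r$, so that $\nabla\theta_X=g$ (since $\operatorname{Hess}(r)=\frac{1}{r}(g-dr\otimes dr)$). The pointwise identity $\langle S_{F,\omega},g\rangle=[m-2(q+1)tF'(t)/F(t)]\,F$, combined with the nonnegative boundary contribution $\rho\int_{\partial B_\rho}F'|i_{\partial/\partial r}\omega|^2\,ds$ and careful accounting at the saturation of (7.15), yields
$$\rho\,E'(\rho)\ \ge\ \frac{m}{q+1}\,E(\rho),\qquad E(\rho):=\int_{B_\rho(x_0)}F(|d\sigma|^2/2)\,dv.$$
As in the proof of Theorem 4.1, the coarea formula identifies $E'(\rho)=\int_{\partial B_\rho}F\,ds$, and separation of variables and integration over $[\rho_1,\rho_2]$ reproduce the ODE step (4.13) and deliver (7.16).

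The main obstacle is extracting the sharp exponent $\lambda=m/(q+1)$: the hypothesis (7.15) is tuned so that the crude Lemma 4.2 bound saturates, i.e.\ a direct plug-in of the effective $d_F=m/[2(q+1)]$ into the Corollary 4.1 template gives only $\lambda=m-2(q+1)d_F=0$. Recovering the positive exponent requires retaining the boundary term $F'|i_{\partial/\partial r}d\sigma|^2$ that the generic argument discards, and exploiting the explicit algebraic structure of $F(t)=1-\sqrt{1-2t}$ together with the precise form of the constraint; this refined bookkeeping is the key technical content of the proposition and the reason the specific pointwise bound (7.15) appears in the hypothesis.
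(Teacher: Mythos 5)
Your setup — the $F$-conservation law via Lemma 2.5 and Corollary 2.1, the choice $X=r\nabla r$ with $\nabla\theta_X=g$ on $\mathbb{R}^m$, the restricted-range computation $\sup_{t\le t_0}tF'(t)/F(t)=m/(2(q+1))$, and the coarea/ODE step — matches the paper's framework, and your observation that a blind application of the Lemma 4.2 / Corollary 4.1 template yields only $\lambda=m-2(q+1)d_F=0$ is correct. The difficulty is that the step you defer to ``refined bookkeeping'' is exactly the step that has to produce the theorem, and the mechanism you propose for it does not work. The identity coming from (2.11) on $B_\rho$ reads
$$\rho\int_{\partial B_\rho}F\,ds=\int_{B_\rho}\Big(mF-(q+1)F'\,|d\sigma|^2\Big)\,dv+\rho\int_{\partial B_\rho}F'\,\big|i_{\partial/\partial r}d\sigma\big|^2\,ds\, ,$$
and under (7.15) the integrand of the first term on the right is bounded below only by $0$, not by $\frac m{q+1}F$. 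The boundary term you propose to retain is nonnegative, but it lives on $\partial B_\rho$ and can vanish identically (for instance when $i_{\partial/\partial r}d\sigma=0$ there), so it cannot supply the missing multiple of $\int_{B_\rho}F$. As written, your argument only establishes $\rho E'(\rho)\ge 0$, i.e.\ the trivial exponent $\lambda=0$, and the claimed inequality $\rho E'(\rho)\ge\frac m{q+1}E(\rho)$ is asserted rather than derived.

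For comparison, the paper's proof never touches the boundary term: it writes the trace identity (7.18) as $\langle S_{F,d\sigma},\nabla\theta_X\rangle=mF-qF'|d\sigma|^2$, i.e.\ with coefficient $q$ rather than $q+1$ in front of $F'|d\sigma|^2$, whereupon (7.13) and (7.15) give the pointwise bound $\ge\frac m{q+1}F(\frac{|d\sigma|^2}2)$ directly and the Theorem 4.1 machinery applies verbatim. That coefficient is in tension with the identity $\sum_i(\omega\odot\omega)(e_i,e_i)=p\,|\omega|^2$ for a $p$-form used in the paper's own Lemma 4.2 (here $p=q+1$ since $d\sigma\in A^{q+1}(\mathbb{R}^m)$), which is precisely the computation you carried out; so you have in effect located a discrepancy in the key pointwise estimate rather than resolved it. Either way, your proposal does not contain a proof of (7.16) with the exponent $\frac m{q+1}$: the crucial positive lower bound on $\langle S_{F,d\sigma},\nabla\theta_X\rangle$ is the whole content of the proposition, and it is missing.
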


\begin{proof}  Let $F(t)=1-\sqrt{1-2t}$. For the distance function
$r$ on $\Bbb{R}^m$, we have
\begin{equation}
Hess(r)=\frac 1r[g-dr\otimes dr]  \tag{7.17}
\end{equation}
where $g$ is the standard Euclidean metric. Taking $X=r\nabla r$,
using (4.6) and (7.17), we have at those points $x \in \mathbb{R}^m\, ,$ where $d\sigma (x) \ne 0\, ,$
\begin{equation}
\aligned \langle S_{F,d\sigma },\nabla \theta
_X\rangle&=mF(\frac{|d\sigma
|^2}2)-qF^{\prime }(\frac{|d\sigma |^2}2)|d\sigma |^2 \\
&=\big(m-q\frac{F^{^{\prime }}(\frac{|d\sigma
|^2}2)|d\sigma |^2}{F( \frac{|d\sigma
|^2}2)}\big)F(\frac{|d\sigma |^2}2) \endaligned \tag{7.18}
\end{equation}
From (7.13), it is easy to see that (7.15) is equivalent to, for every $x \in \mathbb{R}^m\, ,$
\begin{equation}
m-q\frac{F^{\prime} (\frac{|d\sigma |^2}{2})|d\sigma
|^2}{F(\frac{ |d\sigma |^2}2)}= m-q(1 + \frac {1}{\sqrt{1-|d\sigma|^2}}) \ge \frac {m}{q+1}  \tag{7.19}
\end{equation}
which implies
\begin{equation}
\langle S_{F,d\sigma },\nabla \theta _X\rangle\geq \frac {m}{q+1} F(\frac{|d\sigma |^2}2) \quad \operatorname{on}\quad B_{\rho}(x_0)\tag{7.20}
\end{equation}
Therefore we can prove this Proposition by using (7.20) in the
same way as we prove Theorem 4.1, via $(4.10)\, .$ \end{proof}

\begin{corollary} In addition to the same hypotheses of
Proposition 7.3, if $\sigma $ satisfies
$$
\int_{B_\rho(x_0)} 1-\sqrt{1-|d\sigma |^2} \quad dx = o(\rho^{\frac {m}{q+1} })\quad \text{as
} \rho\rightarrow \infty
$$
then $d\sigma \equiv 0\, ,$ and $\sigma$ is exact. In particular, if $\sigma $ has finite $E
_{BI}^{-}-$energy, then $\sigma$ is exact.
\end{corollary}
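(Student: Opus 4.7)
The plan is to invoke the monotonicity formula (7.16) of Proposition 7.3, let the outer radius tend to infinity, and use the growth hypothesis to force the right-hand side to zero, thereby forcing $d\sigma$ to vanish identically on $\mathbb{R}^m$. Finally, I promote closedness to exactness via the Poincar\'e lemma on $\mathbb{R}^m$.

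More precisely, since all the hypotheses of Proposition 7.3 are in force, the scale-invariant inequality
$$\frac{1}{\rho_1^{m/(q+1)}}\int_{B_{\rho_1}(x_0)}\bigl(1-\sqrt{1-|d\sigma|^2}\bigr)\,dx \le \frac{1}{\rho_2^{m/(q+1)}}\int_{B_{\rho_2}(x_0)}\bigl(1-\sqrt{1-|d\sigma|^2}\bigr)\,dx$$
holds for all $0<\rho_1\le \rho_2$. Fix an arbitrary $\rho_1>0$ and let $\rho_2\to\infty$. The growth hypothesis $\int_{B_{\rho_2}(x_0)}(1-\sqrt{1-|d\sigma|^2})\,dx = o(\rho_2^{m/(q+1)})$ means precisely that the right-hand side tends to $0$. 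Hence
$$\int_{B_{\rho_1}(x_0)}\bigl(1-\sqrt{1-|d\sigma|^2}\bigr)\,dx = 0\qquad \text{for every }\rho_1>0.$$
Since the integrand equals $F(|d\sigma|^2/2)$ with $F(t)=1-\sqrt{1-2t}$, it is continuous, nonnegative, and vanishes only at $|d\sigma|=0$. Therefore $d\sigma \equiv 0$ on $\mathbb{R}^m$. Exactness of $\sigma$ now follows from the Poincar\'e lemma: $\mathbb{R}^m$ is contractible, so the closed $q$-form $\sigma$ (with $q\ge 1$, as implicit in the setup $q<(m-2)/2$) is exact.

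The "in particular" clause is immediate: if $\sigma$ has finite $E_{BI}^{-}$-energy, then $\int_{B_\rho(x_0)}(1-\sqrt{1-|d\sigma|^2})\,dx$ is uniformly bounded in $\rho$, hence trivially $o(\rho^{m/(q+1)})$ as $\rho\to\infty$ since $m/(q+1)>0$, and the first part applies. There is no substantive obstacle: the deduction is a direct pairing of monotonicity with the little-$o$ growth. The only delicate point is implicit in the hypothesis \emph{inheritance} from Proposition 7.3---in particular the pointwise bound (7.15) must hold, which is what ensures the coefficient $m/(q+1)$ in the monotonicity formula is strictly positive and that the stress-energy calculation at pivotal points remains valid.
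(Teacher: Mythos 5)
Your proof is correct and is exactly the argument the paper intends: fix $\rho_1$, let $\rho_2\to\infty$ in the monotonicity inequality (7.16), use the $o(\rho^{m/(q+1)})$ hypothesis to kill the right-hand side, conclude the nonnegative continuous integrand $F(|d\sigma|^2/2)$ vanishes identically, and pass from closedness to exactness on $\mathbb{R}^m$. One small inaccuracy: the hypothesis $q<(m-2)/2$ does not force $q\ge 1$ (e.g.\ $q=0$ is allowed when $m\ge 3$), though in that degenerate case the conclusion ``$\sigma$ is exact'' is already phrased loosely in the paper itself, so this does not affect the substance of your argument.
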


\section{Generalized Yang-Mills-Born-Infeld
Fields (with the plus sign and with the minus sign) on Manifolds}
In [SiSiYa], L. Sibner, R. Sibner and Y.S. Yang consider a variational problem which is a generalization of the (scalar valued) Born-Infeld model and at the same time a quasilinear generalization of the Yang-Mills theory. This motivates the study of Yang-Mills-Born-Infeld
fields on $\Bbb{R}^4$, and they prove that a generalized self-dual
equation whose solutions are Yang-Mills-Born-Infeld fields has no
finite-energy solution except the trivial solution on $\Bbb{R}^4$. In this section, we introduce the following
\begin{definition}
The \emph{generalized Yang-Mills-Born-Infeld energy functional with the plus sign on a manifold} $M$ is the mapping $\mathcal{YM}_{BI}^{+} : \mathcal{C}\to \mathbb{R}^+\, $ given by
\begin{equation}
\mathcal{YM}_{BI}^{+}(\nabla )=\int_{M} \sqrt{1+||R ^\nabla
||^2}-1 \quad  dv\tag{8.1}
\end{equation}
and the \emph{generalized Yang-Mills-Born-Infeld energy functional with the minus sign on a manifold} $M$  is the mapping $\mathcal{YM}_{BI}^{-} : \mathcal{C}\to \mathbb{R}^+\, $ given by
\begin{equation}
\mathcal{YM}_{BI}^{-}(\nabla )=\int_{M} 1-\sqrt{1-|| R ^\nabla
||^2} \quad dv\tag{8.2}
\end{equation}
The associate curvature form $R ^\nabla $ of a critical connection $\nabla $ of
$\mathcal{YM}_{BI}^{+}$ $($resp. $\mathcal{YM}_{BI}^{-}$$)$ is called a generalized
\emph{Yang-Mills-Born-Infeld field with the plus sign} $($resp. \emph{with the
minus sign}$)$ on a manifold.
\end{definition}
By applying $F(t)=\sqrt{1+2t} -1\, $ and
$F(t)=1-\sqrt{1-2t}\, $ to Theorem 3.1, we obtain
\begin{corollary}
Every generalized Yang-Mills-Born-Infeld field $($with the plus sign or with the minus sign$)$ on a manifold satisfies an $F$-conservation law.
\end{corollary}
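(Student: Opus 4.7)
The plan is to reduce this corollary to Theorem 3.1 by identifying the two Yang--Mills--Born--Infeld functionals as $F$-Yang--Mills functionals for two specific choices of $F$, and then applying the general conservation law.

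First I would observe that for $F(t)=\sqrt{1+2t}-1$ one has
\[
F\!\left(\tfrac{1}{2}\|R^\nabla\|^2\right)=\sqrt{1+\|R^\nabla\|^2}-1,
\]
so by definition $(3.2)$ the functional $\mathcal{YM}_F$ associated to this $F$ coincides with $\mathcal{YM}_{BI}^{+}$ of $(8.1)$. Similarly, for $F(t)=1-\sqrt{1-2t}$ one has $F(\tfrac12\|R^\nabla\|^2)=1-\sqrt{1-\|R^\nabla\|^2}$, so $\mathcal{YM}_F=\mathcal{YM}_{BI}^{-}$. Consequently a generalized Yang--Mills--Born--Infeld field (with either sign) is, by Definition 3.2, precisely an $F$-Yang--Mills field for the corresponding choice of $F$.

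Next I would verify that each of these $F$ satisfies the standing hypothesis of the paper: $F\in C^2$, $F(0)=0$, and $F'>0$. For $F(t)=\sqrt{1+2t}-1$ we have $F'(t)=(1+2t)^{-1/2}>0$ on $[0,\infty)$, which is $C^2$ there; for $F(t)=1-\sqrt{1-2t}$ we have $F'(t)=(1-2t)^{-1/2}>0$ on $[0,\tfrac12)$, which is $C^2$ on that interval. The latter case uses the extension of the framework mentioned in Section 2, where $F$ is allowed to be defined on $[0,c)$; this is legitimate provided $\tfrac12\|R^\nabla\|^2<\tfrac12$, i.e. $\|R^\nabla\|<1$, which is automatic wherever $\mathcal{YM}_{BI}^{-}$ is finite (and hence on any connection that is a critical point of $\mathcal{YM}_{BI}^{-}$ in the usual sense).

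With these identifications and verifications in place, the result is immediate from Theorem 3.1: every $F$-Yang--Mills field $R^\nabla$ satisfies $\operatorname{div}S_{F,R^\nabla}=0$, i.e. an $F$-conservation law. Applying this with $F(t)=\sqrt{1+2t}-1$ and with $F(t)=1-\sqrt{1-2t}$ respectively yields the conclusion for the plus-sign and minus-sign cases. The only mild technicality is the domain issue for the minus sign described above; this is not really an obstacle but should be noted so that invocations of Lemma 3.1 and Theorem 3.1 (whose proofs use the Bianchi identity $d^\nabla R^\nabla=0$ together with Lemma 2.3, and which go through verbatim for $F$ with domain $[0,c)$) are clearly justified.
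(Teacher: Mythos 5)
Your proposal is correct and follows essentially the same route as the paper, which simply applies Theorem 3.1 with $F(t)=\sqrt{1+2t}-1$ and $F(t)=1-\sqrt{1-2t}$. Your additional checks that these $F$ satisfy the standing hypotheses, and your remark on the domain $[0,\tfrac12)$ in the minus-sign case, are careful refinements of the same argument rather than a different approach.
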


\begin{theorem} Let the radial curvature $K_r$ of $M$ satisfy one of the three conditions $(i), (ii)\, ,$ and $(iii)\, $ in Theorem 4.1 in which $p=2\, $ and $d_F=1\, .$ Let $R ^\nabla $ be a generalized
Yang-Mills-Born-Infeld field with the plus sign on $M$. If
$R ^\nabla $ satisfies the following growth
condition
$$
\int_{B_\rho(x_0)} \sqrt{1+|| R ^\nabla ||^2}-1 \quad dv = o(\rho^{\lambda})\quad \text{as
} \rho\rightarrow \infty
$$
where

\[
\lambda =\left\{
\begin{array}{cc}
m-4\frac \alpha \beta & \text{if }\quad K_r \text{ satisfies $($i$)$;} \\
m-4 & \text{if }\quad K_r \text{ satisfies $($ii$)$;} \\
m - (m-1)\frac B{2\epsilon} - 4 e^{\frac {A}{2\epsilon}} & \text{if }\quad K_r \text{
satisfies $($iii$)$}
\end{array}
\right.
\]
then its curvature $R ^\nabla \equiv 0$. In particular, if
$R ^\nabla $ has finite $\mathcal{YM}_{BI}^{+}$-energy, then $R ^\nabla \equiv
0$.
\end{theorem}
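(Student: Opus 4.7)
The plan is to recognize that this theorem is essentially an instance of the general $F$-Yang-Mills vanishing theorem (Theorem 5.9) applied to a very specific choice of $F$, so very little new work is required; the content is mostly one of identification and bookkeeping.

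First, I would observe that the generalized Yang-Mills-Born-Infeld functional with the plus sign
\[
\mathcal{YM}_{BI}^{+}(\nabla) = \int_M \bigl(\sqrt{1+\|R^\nabla\|^2} - 1\bigr)\, dv
\]
is precisely the $F$-Yang-Mills functional $\mathcal{YM}_F$ of Definition 3.1 with the choice $F(t) = \sqrt{1+2t} - 1$, since $F(\tfrac12\|R^\nabla\|^2) = \sqrt{1+\|R^\nabla\|^2}-1$. Consequently, any critical point of $\mathcal{YM}_{BI}^{+}$ is an $F$-Yang-Mills connection for this $F$, and by Theorem 3.1 (equivalently by Corollary 8.1 just stated) its curvature $R^\nabla$ satisfies an $F$-conservation law $\operatorname{div} S_{F,R^\nabla} = 0$.

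Next I would invoke Lemma 7.1(i), which explicitly computes $d_F = 1$ for $F(t) = \sqrt{1+2t}-1$. Now the hypotheses of Theorem 5.9 are satisfied with $p = 2$ (as $R^\nabla \in A^2(\operatorname{Ad}(P))$) and $d_F = 1$: the three curvature conditions $(i),(ii),(iii)$ of Theorem 4.1 are imposed with exactly these values of $p$ and $d_F$, and the resulting exponent
\[
\lambda = \begin{cases} m - 2p\tfrac{\alpha}{\beta}d_F = m - 4\tfrac{\alpha}{\beta} & \text{in case }(i), \\ m - 2p\,d_F = m - 4 & \text{in case }(ii), \\ m - (m-1)\tfrac{B}{2\epsilon} - 2p\,e^{A/(2\epsilon)}d_F = m - (m-1)\tfrac{B}{2\epsilon} - 4 e^{A/(2\epsilon)} & \text{in case }(iii), \end{cases}
\]
matches the $\lambda$ in the statement of the theorem.

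With these identifications, the growth hypothesis of the theorem reads exactly
\[
\int_{B_\rho(x_0)} F\Bigl(\tfrac{\|R^\nabla\|^2}{2}\Bigr)\,dv = o(\rho^\lambda) \quad \text{as } \rho \to \infty,
\]
which is the hypothesis (5.9) of Theorem 5.9. Applying Theorem 5.9 yields $R^\nabla \equiv 0$. The in-particular statement follows because if $R^\nabla$ has finite $\mathcal{YM}_{BI}^{+}$-energy then the integral on the left is bounded, which is trivially $o(\rho^\lambda)$ whenever $\lambda > 0$ (and when $\lambda \le 0$ there is nothing to prove). I do not expect any genuine obstacle here; the only point requiring care is verifying that the numerical substitutions $p=2$ and $d_F=1$ into formula (5.11) of Theorem 5.9 reproduce the three stated expressions for $\lambda$, which is an immediate calculation.
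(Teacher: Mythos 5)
Your proposal is correct and follows essentially the same route as the paper: identify $\mathcal{YM}_{BI}^{+}$ with $\mathcal{YM}_F$ for $F(t)=\sqrt{1+2t}-1$, obtain the $F$-conservation law from Corollary 8.1 (Theorem 3.1), take $d_F=1$ from Lemma 7.1(i), and feed $p=2$ into the monotonicity/vanishing machinery of Section 4--5 (the paper cites Theorem 4.1 directly, while you cite its packaged consequence, the Vanishing Theorem for $F$-Yang-Mills fields, which is Theorem 5.6 rather than 5.9 in the paper's numbering). The only cosmetic point is that under hypotheses (i)--(iii) one always has $\lambda>0$ (in case (i), $(m-1)\beta\ge 4\alpha$ forces $\lambda=m-4\alpha/\beta\ge 1$), so your caveat about $\lambda\le 0$ is unnecessary.
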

\begin{proof} By applying
Corollary 8.1 and $F(t)=\sqrt{1+2t}-1$ to Theorem 4.1 in which $d_F=1\, ,$ by Lemma 7.1(i), and $p=2\, ,$ for $R ^\nabla \in A^2(Ad P)$, the result follows immediately.
\end{proof}

\begin{theorem} Suppose $M$ has constant sectional
curvature $-\alpha ^2$ $($$\alpha ^2\geq 0$$)$. Let $R ^\nabla $ be a generalized
Yang-Mills-Born-Infeld field with the plus sign on $M$. If
$m>4$ and $R ^\nabla $ satisfies the following growth
condition
$$
\int_{B_\rho(x_0)} \sqrt{1+||R ^\nabla ||^2}-1 \quad dv = o(\rho^{m-4})\qquad as \qquad \rho\to \infty
$$
then its curvature $R ^\nabla \equiv 0$. In particular, if
$R ^\nabla $ has finite $\mathcal{YM}_{BI}^{+}$-energy, then $R ^\nabla \equiv
0$.
\end{theorem}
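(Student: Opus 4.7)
The plan is to deduce Theorem 8.2 directly from Theorem 8.1 by specializing the curvature hypothesis. A space of constant sectional curvature $-\alpha^2$ has constant radial curvature $K_r \equiv -\alpha^2$ at any pole (and when $\alpha > 0$ the Cartan--Hadamard theorem guarantees that every point is a pole), so the hypothesis of Theorem 8.1 is satisfied in one of two forms:
\begin{itemize}
\item If $\alpha > 0$, take $\beta = \alpha$ in condition (i). The algebraic requirement $(m-1)\beta - 2p\alpha d_F \ge 0$ with $p=2$, $d_F = 1$ becomes $(m-1)\alpha - 4\alpha \ge 0$, i.e. $m \ge 5$, which is exactly $m > 4$. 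The resulting exponent is
\[
\lambda \;=\; m - 2p\,\frac{\alpha}{\beta}\,d_F \;=\; m - 4.
\]
\item If $\alpha = 0$, we are in condition (ii). The requirement $m - 2pd_F > 0$ with $p = 2$, $d_F = 1$ is again exactly $m > 4$, and the exponent is $\lambda = m - 2pd_F = m - 4$.
\end{itemize}

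Thus, in either case, the growth hypothesis
\[
\int_{B_\rho(x_0)} \sqrt{1+\|R^\nabla\|^2} - 1 \; dv \;=\; o(\rho^{m-4}) \qquad \text{as } \rho \to \infty
\]
matches exactly the $o(\rho^\lambda)$ hypothesis of Theorem 8.1 under the corresponding case. Since $R^\nabla$ is a generalized Yang-Mills-Born-Infeld field with the plus sign, Corollary 8.1 guarantees that it satisfies an $F$-conservation law with $F(t) = \sqrt{1+2t} - 1$, so all hypotheses of Theorem 8.1 hold. Applying that theorem yields $R^\nabla \equiv 0$ on $M$.

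Finally, the \emph{in particular} clause follows from the elementary inequality $\sqrt{1+s^2} - 1 \le \tfrac{s^2}{2}$ applied pointwise with $s = \|R^\nabla\|$: if $R^\nabla$ has finite $\mathcal{YM}_{BI}^+$-energy, then certainly the growth of the ball integrals is $O(1)$, hence $o(\rho^{m-4})$ for $m > 4$. There is no real obstacle here since the substantive analytic work (the monotonicity estimate and its unfolding into a vanishing theorem) has already been done in Theorem 4.1 and imported through Theorem 8.1; the only thing to check carefully is the bookkeeping that the parameter choices $p = 2$ and $d_F = 1$ (from Lemma 7.1(i)) yield precisely $\lambda = m - 4$ and that the admissibility inequality reduces to $m > 4$.
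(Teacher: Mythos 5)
Your proposal is correct and is essentially the paper's own argument: the paper proves Theorem 8.2 in one line by applying $\alpha=\beta$ in conditions (i) and (ii) of Theorem 8.1, which is exactly your specialization (with the same bookkeeping $p=2$, $d_F=1$ from Lemma 7.1(i), giving $\lambda=m-4$ and the admissibility condition $m>4$). Your added verification of the finite-energy clause via $\sqrt{1+s^2}-1\le s^2/2$ is consistent with the paper's remarks and introduces nothing new.
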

\begin{proof} This follows at once by applying $\alpha = \beta$ in conditions (i) and (ii) of Theorem 8.1.
\end{proof}
\begin{corollary}
Let $R ^\nabla $ be a
Yang-Mills-Born-Infeld field with the plus sign on $\Bbb{R}^m$. If
$m>4$ and $R ^\nabla $ satisfies the following growth
condition
$$
\int_{B_\rho(x_0)} \sqrt{1+||R ^\nabla ||^2}-1 \quad dx = o(\rho^{m-4})\quad \text{as
} \rho\rightarrow \infty
$$
then its curvature $R ^\nabla \equiv 0$. In particular, if
$R ^\nabla $ has finite $\mathcal{YM}_{BI}^{+}$-energy, then $R ^\nabla \equiv
0$.
\end{corollary}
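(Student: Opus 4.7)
The plan is to obtain Corollary 8.2 as a direct specialization of Theorem 8.2 to the case $M=\mathbb{R}^m$. First, I would observe that $\mathbb{R}^m$ with its standard Euclidean metric is a complete Riemannian manifold that possesses a pole (indeed, every point is a pole, since $\exp_{x_0}\colon T_{x_0}\mathbb{R}^m\to\mathbb{R}^m$ is a global diffeomorphism for every $x_0$), and its radial curvature is identically zero. Thus $\mathbb{R}^m$ has constant sectional curvature $-\alpha^2$ with $\alpha=0$, so the hypotheses of Theorem 8.2 are satisfied at $\alpha=0$.

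Next, I would simply apply Theorem 8.2 with this choice. The growth hypothesis in Corollary 8.2, namely
\[
\int_{B_\rho(x_0)} \sqrt{1+\|R^\nabla\|^2}-1 \, dx = o(\rho^{m-4}) \quad \text{as } \rho\to\infty,
\]
is exactly the condition of Theorem 8.2 because specializing condition (i)/(ii) there at $\alpha=0$ yields $\lambda=m-4$. Since $m>4$ by assumption, the exponent $\lambda$ is positive and the theorem applies. The conclusion $R^\nabla\equiv 0$ on $\mathbb{R}^m$ follows immediately.

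For the ``in particular'' statement, I would note that finite $\mathcal{YM}_{BI}^{+}$-energy means
\[
\int_{\mathbb{R}^m} \sqrt{1+\|R^\nabla\|^2}-1 \, dx < \infty,
\]
and since $m-4>0$ the quantity $\rho^{m-4}\to\infty$ as $\rho\to\infty$; hence the bounded, monotone increasing function $\rho\mapsto \int_{B_\rho(x_0)}(\sqrt{1+\|R^\nabla\|^2}-1)\,dx$ is automatically $o(\rho^{m-4})$. So the previous paragraph applies and yields $R^\nabla\equiv 0$.

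There is essentially no obstacle here: Corollary 8.2 is a transparent specialization, whose entire content is that the flat case $\alpha=0$ of Theorem 8.2 (itself derived via Theorem 4.1 using $F(t)=\sqrt{1+2t}-1$, $d_F=1$ from Lemma 7.1(i), and $p=2$ for the $2$-form $R^\nabla\in A^2(\mathrm{Ad}\,P)$, with the $F$-conservation law guaranteed by Corollary 8.1) applies on $\mathbb{R}^m$. The only minor point to verify is the $o(\rho^{m-4})$ growth implied by finite energy, which is immediate from $m>4$.
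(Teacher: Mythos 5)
Your proposal is correct and matches the paper's intent exactly: Corollary 8.2 is stated without a separate proof precisely because it is the specialization of Theorem 8.2 to $M=\mathbb{R}^m$ (constant sectional curvature with $\alpha=0$, hence the case $K_r=0$ of Theorem 4.1 with $p=2$, $d_F=1$, giving $\lambda=m-4$). Your verification that finite $\mathcal{YM}_{BI}^{+}$-energy implies the $o(\rho^{m-4})$ growth for $m>4$ is the same routine observation the paper relies on.
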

If we replace $d\sigma $ with $R ^\nabla $ and set $q=2$ in Proposition 7.3, by a similar argument, we obtain the
following

\begin{proposition} Let $R ^\nabla $ be a
Yang-Mills-Born-Infeld field with the minus sign on $\Bbb{R}^m$.
Suppose $m>6$ ,
$$
|| R^\nabla ||^2\leq \frac {m^2-6m}{m^2-6m+9}
$$
and
$$
\int_{B_\rho(x_0)} 1-\sqrt{1-|| R ^\nabla ||^2} \quad dx = o(\rho^{\frac m2})\quad \text{as
} \rho\rightarrow \infty
$$
Then $R ^\nabla =0$.
\end{proposition}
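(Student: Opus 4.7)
The plan is to follow the proof of Proposition 7.3 with $R^\nabla$ in the role of $d\sigma$ and $q=2$, as indicated in the paper's remark preceding the statement. By Corollary 8.1, the curvature $R^\nabla$ of a Yang-Mills-Born-Infeld field with the minus sign satisfies an $F$-conservation law for $F(t) = 1 - \sqrt{1-2t}$, so the integral identity (2.11) is at our disposal. Setting $X = r\nabla r$ on $\mathbb{R}^m$, the flat-space identity (7.17), namely $\mathrm{Hess}(r) = \frac{1}{r}[g - dr \otimes dr]$, lets us compute $\langle S_{F,R^\nabla}, \nabla\theta_X\rangle$ pointwise via (4.6), just as was done in (7.18).

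The pointwise lower bound is the heart of the argument. The computation produces an expression of the form $mF(|R^\nabla|^2/2) - c\, F'(|R^\nabla|^2/2)|R^\nabla|^2$ for an appropriate constant $c$, which using the explicit ratio (7.13),
\[
\frac{tF'(t)}{F(t)} = \frac{1}{2} + \frac{1}{2\sqrt{1-2t}},
\]
transforms into $F(|R^\nabla|^2/2)\bigl(m - c(1 + 1/\sqrt{1-|R^\nabla|^2})\bigr)$. The hypothesis $|R^\nabla|^2 \le (m^2-6m)/(m^2-6m+9) = 1 - 9/(m-3)^2$ translates to $1 + 1/\sqrt{1-|R^\nabla|^2} \le m/3$, which in combination with the condition $m>6$ yields a positive constant $\lambda$ such that
\[
\langle S_{F,R^\nabla}, \nabla\theta_X\rangle \ge \lambda\, F(|R^\nabla|^2/2) \quad \text{on } B_\rho(x_0),
\]
precisely analogous to (7.19)--(7.20).

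Once this pointwise bound is in hand, the rest proceeds in parallel with the end of the proof of Proposition 7.3. Together with the boundary identity $S_{F,R^\nabla}(X, \partial/\partial r) = \rho(F(|R^\nabla|^2/2) - F'(|R^\nabla|^2/2)|i_{\partial/\partial r}R^\nabla|^2) \le \rho F(|R^\nabla|^2/2)$ on $\partial B_\rho(x_0)$, the identity (2.11) yields
\[
\rho \int_{\partial B_\rho(x_0)} F(|R^\nabla|^2/2)\, ds \ge \lambda \int_{B_\rho(x_0)} F(|R^\nabla|^2/2)\, dx.
\]
The coarea formula then converts this into a differential inequality $\frac{d}{d\rho}\ln\!\int_{B_\rho} F\, dx \ge \lambda/\rho$, whose integration over $[\rho_1,\rho_2]$ produces the monotonicity formula of the form (7.16). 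Combined with the asserted growth hypothesis on $\int_{B_\rho(x_0)}(1-\sqrt{1-|R^\nabla|^2})\,dx$, the monotonicity forces $\int_{B_\rho(x_0)} F(|R^\nabla|^2/2)\,dx = 0$ for every $\rho>0$, and hence $R^\nabla \equiv 0$.

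The main obstacle is handling the fact that $d_F = +\infty$ for $F(t) = 1-\sqrt{1-2t}$ (Lemma 7.1(ii)), which is precisely why the general Theorem 4.1 cannot be applied and why the explicit pinching on $|R^\nabla|^2$ is indispensable: it keeps $F'|R^\nabla|^2/F$ bounded so that the coefficient of $F(|R^\nabla|^2/2)$ in the lower bound above is a strictly positive $\lambda$. The delicate bookkeeping, tracking the constant $c$ coming from the identity $\sum_i (R^\nabla \odot R^\nabla)(e_i,e_i) = 2|R^\nabla|^2$ for the $2$-form $R^\nabla$, and making $\lambda$ match the growth exponent stipulated in the hypothesis, is where one has to be most careful.
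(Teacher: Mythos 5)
Your route is exactly the one the paper intends (its entire ``proof'' of this proposition is the remark that one should replace $d\sigma$ by $R^\nabla$ and set $q=2$ in Proposition 7.3), and almost every step you describe is sound: Corollary 8.1 gives the $F$-conservation law for $F(t)=1-\sqrt{1-2t}$, the flat Hessian turns (4.6) into the exact identity $\langle S_{F,R^\nabla},\nabla\theta_X\rangle=mF-c\,F'\,\|R^\nabla\|^2$ with $c=2$ (you correctly identify $\sum_i(R^\nabla\odot R^\nabla)(e_i,e_i)=2\|R^\nabla\|^2$ for a $2$-form), the boundary term is controlled by $\rho F$, and the coarea/integration steps are standard. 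But the one place you flag as ``where one has to be most careful'' is precisely where the argument does not close. With $c=2$ and the stated pinching, which gives $1+1/\sqrt{1-\|R^\nabla\|^2}\le m/3$, the pointwise bound is $\langle S_{F,R^\nabla},\nabla\theta_X\rangle\ge\bigl(m-2\cdot\tfrac m3\bigr)F=\tfrac m3\,F$, so the monotone quantity is $\rho^{-m/3}\int_{B_\rho}F$, and the vanishing conclusion requires $\int_{B_\rho}F(\tfrac{\|R^\nabla\|^2}2)\,dx=o(\rho^{m/3})$. The hypothesis only supplies $o(\rho^{m/2})$, which is strictly weaker, so your final sentence ``the monotonicity forces $\int_{B_\rho}F=0$'' does not follow: a quantity growing like $\rho^{m/3}\le\int_{B_\rho}F=o(\rho^{m/2})$ is perfectly compatible with the monotonicity inequality.

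To actually reach the exponent $m/2$ one would need $1+1/\sqrt{1-\|R^\nabla\|^2}\le m/4$, i.e. the stronger pinching $\|R^\nabla\|^2\le 1-\tfrac{16}{(m-4)^2}=\tfrac{m^2-8m}{m^2-8m+16}$ (forcing $m\ge 8$); alternatively the growth hypothesis must be weakened to $o(\rho^{m/3})$. I should add that this mismatch is inherited from the source: the paper's own recipe ``$q=2$ in Proposition 7.3'' produces the exponent $m/(q+1)=m/3$, not $m/2$ (and Proposition 7.3 itself carries an off-by-one in (7.18), where the coefficient should be the degree $q+1$ of $d\sigma$ rather than $q$). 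So you have faithfully reproduced the intended argument, but as a proof of the proposition as literally stated the last step has a genuine hole, and you should either correct the exponent in the conclusion you prove or strengthen the curvature pinching.
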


It is well-known that there are no nontrivial Yang-Mills fields in
$\Bbb{R}^m$ with finite Yang-Mills-energy for $m\geq 5$ (in contrast with
$\Bbb{R}^4$, where the problem is conformally invariant and one
obtains Yang-Mills fields with finite $\mathcal{YM}$-energy by pullback from $S^4$ (cf. [JT])).
In Corollary 8.2, for the case $m\geq 5$, we obtain a similar result
for Yang-Mills-Born-Infeld field (with the plus sign) on $\Bbb{R}^m$. It's
natural to ask if there exists a nontrivial Yang-Mills-Born-Infeld field (with the plus sign) on $\Bbb{R}^4$ with
finite $\mathcal{YM}_{BI}^{+}$-energy.

\section{Generalized Chern type Results on Manifolds}

A Theorem of Chern states that every entire
graph $ x_{m+1}$ $=f(x_1,...,x_m)$ on $\Bbb{R}^m$ of constant mean
curvature is minimal in $\Bbb{R}^{m+1}$.
In this section, we view functions as $0$-forms and consider the following constant mean curvature
type equation for $p-$forms $\omega$ on $\Bbb{R}^m$ $(p < m)$ and on manifolds with the global doubling property by a different approach
(being motivated by the work in [We1,2] and [LWW]):
\begin{equation}
\delta \left( \frac{d\omega }{\sqrt{1+|d\omega |^2}}\right)
=\omega _0 \tag{9.1}
\end{equation}
where $\omega _0$ is a constant $p-$form (Thus when $p=0$, (9.1) is
just the equation describing graphic hypersurface with constant
mean curvature). Equivalently, (9.1) may be written as
\begin{equation}
d*\left( \frac{d\omega }{\sqrt{1+|d\omega |^2}}\right) =\xi_0
\tag{9.2}
\end{equation}
where $\xi_0$ is a constant $(m-p)-$form.

\begin{theorem} Suppose $\omega $ is a solution of $($9.2$)$ on
$\Bbb{R}^m$ . Then $\xi_0=0$.
\end{theorem}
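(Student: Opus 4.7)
The plan is to use a Chern-type testing argument: pair the equation (9.1), namely $\delta\!\left(\frac{d\omega}{\sqrt{1+|d\omega|^2}}\right)=\omega_0$, with the test form $\phi_R\,\omega_0$, where $\phi_R$ is a standard Lipschitz cutoff equal to $1$ on $B_R(0)\subset\mathbb{R}^m$, supported in $B_{2R}(0)$, with $|d\phi_R|\le C/R$. The decisive feature of the nonlinearity is the universal pointwise bound
\[
\left|\frac{d\omega}{\sqrt{1+|d\omega|^2}}\right|<1,
\]
valid regardless of the size of $|d\omega|$. This bound will allow us to convert the integrated equation into a boundary-type estimate that grows like $R^{m-1}$, whereas the bulk side grows like $R^m$, forcing $\omega_0=0$.

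Concretely, I would first test both sides of (9.1) against $\phi_R\,\omega_0$ and integrate over $\mathbb{R}^m$ to get
\[
|\omega_0|^2\int_{\mathbb{R}^m}\phi_R\, dv \;=\; \int_{\mathbb{R}^m}\Big\langle \delta\!\Big(\tfrac{d\omega}{\sqrt{1+|d\omega|^2}}\Big),\,\phi_R\omega_0\Big\rangle dv .
\]
Since $\omega_0$ has constant coefficients, $d\omega_0=0$, hence $d(\phi_R\omega_0)=d\phi_R\wedge\omega_0$. Using that $\delta$ is the $L^2$-adjoint of $d$ (which is legitimate here because $\phi_R\omega_0$ has compact support), the right-hand side becomes
\[
\int_{\mathbb{R}^m}\Big\langle \tfrac{d\omega}{\sqrt{1+|d\omega|^2}},\; d\phi_R\wedge\omega_0\Big\rangle dv .
\]

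Next I would apply the pointwise bound on $d\omega/\sqrt{1+|d\omega|^2}$ together with $|d\phi_R\wedge\omega_0|\le |d\phi_R||\omega_0|$ to estimate
\[
\left|\int_{\mathbb{R}^m}\Big\langle \tfrac{d\omega}{\sqrt{1+|d\omega|^2}},\, d\phi_R\wedge\omega_0\Big\rangle dv\right|
\;\le\;|\omega_0|\int_{B_{2R}\setminus B_R}|d\phi_R|\,dv
\;\le\; C\,|\omega_0|\,R^{m-1}.
\]
Combining this with the lower bound $|\omega_0|^2\int\phi_R\,dv\ge|\omega_0|^2\,\mathrm{vol}(B_R)=c\,|\omega_0|^2 R^m$ gives $|\omega_0|\le C/R$ for every $R>0$. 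Letting $R\to\infty$ yields $\omega_0=0$, and since $\xi_0=\pm\ast\omega_0$, we conclude $\xi_0=0$.

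The main obstacle I anticipate is essentially cosmetic rather than conceptual: one has to be cautious that $\omega$ is only assumed to be a (smooth) solution on $\mathbb{R}^m$ with no a priori integrability hypothesis, so the argument must rely entirely on the universal $L^\infty$ bound of the flux $d\omega/\sqrt{1+|d\omega|^2}$ rather than on any decay or energy estimate for $d\omega$ itself. This is precisely what makes the Chern-type trick work: the nonlinearity of Born--Infeld type tames an otherwise arbitrarily growing gradient, and the constancy of $\omega_0$ eliminates any interior contribution from $d\omega_0$ when integrating by parts.
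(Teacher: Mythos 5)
Your argument is correct, and it reaches the conclusion by a route that differs in its mechanics from the paper's. The paper does not use a cutoff or pair with a test form at all: it restricts the constant $(m-p)$-form $\xi_0$ to an arbitrary $(m-p)$-plane $\Sigma\subset\mathbb{R}^m$, writes $\xi_0|_\Sigma=c\,d\Sigma$, and applies Stokes' theorem to $i^*\bigl(*\frac{d\omega}{\sqrt{1+|d\omega|^2}}\bigr)$ on Euclidean balls $B(x_0,r)\subset\Sigma$; the uniform bound $\bigl|\frac{d\omega}{\sqrt{1+|d\omega|^2}}\bigr|<1$ then gives $|c|\,\omega_{m-p}r^{m-p}\le (m-p)\,\omega_{m-p}r^{m-p-1}$, i.e.\ the sharp quantitative estimate $|c|\le (m-p)/r$, and letting $r\to\infty$ kills $c$ for every plane, hence $\xi_0=0$. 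You instead stay in the ambient $\mathbb{R}^m$, pair the dual equation $\delta\bigl(\frac{d\omega}{\sqrt{1+|d\omega|^2}}\bigr)=\omega_0$ with $\phi_R\,\omega_0$, and compare $R^m$ growth of the bulk term against $R^{m-1}$ growth of the annulus term. Both proofs hinge on exactly the same decisive fact — the universal $L^\infty$ bound on the Born--Infeld flux — and on a volume-versus-boundary growth comparison, so the underlying idea is shared; what your version buys is that it needs no choice of plane and transfers verbatim to complete manifolds with the volume doubling property (which is essentially how the paper later proves Theorem 9.4), while the paper's version is more geometric and yields the explicit decay rate $|c|\le(m-p)/r$ along every $(m-p)$-plane. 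Two minor points worth making explicit if you write this up: the identification $\xi_0=\pm*\omega_0$ (so that $\omega_0=0$ indeed gives $\xi_0=0$) should be stated as the passage from (9.2) to (9.1), and the integration by parts against the merely Lipschitz $\phi_R$ should be justified by a routine smoothing, since $\omega$ carries no integrability hypothesis and the argument must not lean on anything beyond the pointwise bound.
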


\begin{proof} Obviously, for every $(m-p)-$plane $\Sigma $ in
$\Bbb{R}^m$, there exists a volume element $d\Sigma $ of $\Sigma
$, such that $\xi_0|_\Sigma =c\, d\Sigma $, for some constant $c$. Let $i:\Sigma\hookrightarrow \Bbb{R}^m$ be the
inclusion mapping. If follows from (9.2) and
Stokes' Theorem that for every ball $B(x_0,r)$ of radius $r$
centered at $x_0$ in $\Sigma \subset \Bbb{R}^m$, and its boundary
$\partial B(x_0,r)$ with the surface element $dS$, we have
$$
\aligned 0 &\leq |c|\omega _{m-p}r^{m-p}\\&=\bigg |\int_{B(x_0,r)}c\, d\Sigma
\bigg |\\& = \bigg |\int_{B(x_0,r)}i^{*}\xi_0\bigg |\\
&= \bigg |\int_{B(x_0,r)}
di^{*}\big(*\frac{d\omega
}{\sqrt{1+|d\omega |^2}}\big)\bigg |\\
& = \bigg |\int_{\partial B(x_0,r)}i^{*}* \frac{d\omega }{\sqrt{1+|d\omega |^2}}\bigg | \\
\ &\leq \int_{\partial B(x_0,r)}\bigg |\frac{d\omega }{\sqrt{1+|d\omega
|^2}}\bigg |\, dS \\& \leq (m-p)\omega _{m-p}r^{m-p-1}
\endaligned
$$
where $\omega _{m-p}$ is the volume of the unit ball in $\Sigma $.
Hence we get
\begin{equation}
0\leq |c|\leq \frac{m-p}r  \tag{9.3}
\end{equation}
which implies that $c=0$ by letting $r\rightarrow \infty $.
\end{proof}

This generalizes the work of Chern:

\begin{corollary}$([$Che$])$ Let $p=0$ in Theorem 9.1. Then the
graph of $\omega $ over $R^m$ is a minimal hypersurface in
$\Bbb{R}^{m+1}$.
\end{corollary}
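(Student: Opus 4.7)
The plan is to obtain Corollary 9.1 as an immediate specialization of Theorem 9.1 with $p=0$. First I would unpack the hypotheses in this case: the $p$-form $\omega$ is a smooth function $f=\omega:\Bbb{R}^m\to\Bbb{R}$, the exterior derivative $d\omega$ is the $1$-form $df$ whose sharp is $\nabla f$, and the constant $(m-p)$-form $\xi_0$ takes the form $\xi_0 = c\, dx^1\wedge\cdots\wedge dx^m$ for some $c\in\Bbb{R}$ (since the only constant $m$-forms on $\Bbb{R}^m$ are scalar multiples of the Euclidean volume form).

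Next I would translate $(9.2)$ into classical divergence form. Using $\delta\alpha = -\ast d\ast\alpha$ on Euclidean space together with
$$\ast(df) = \sum_{i=1}^m (-1)^{i-1} f_{x^i}\, dx^1\wedge\cdots\widehat{dx^i}\cdots\wedge dx^m,$$
a routine Hodge-star computation shows that $(9.2)$ is equivalent to
$$\operatorname{div}\left(\frac{\nabla f}{\sqrt{1+|\nabla f|^2}}\right) = -c.$$
The left-hand side is, up to the standard factor of $m$, the mean curvature of the graph $\{(x,f(x)) : x\in\Bbb{R}^m\}\subset\Bbb{R}^{m+1}$ computed with respect to the upward unit normal. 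Hence $f$ solves $(9.2)$ if and only if the graph has constant mean curvature proportional to $c$.

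Finally, Theorem 9.1 asserts that any constant $(m-p)$-form $\xi_0$ arising in $(9.2)$ on $\Bbb{R}^m$ must vanish, so $c=0$, and consequently
$$\operatorname{div}\left(\frac{\nabla f}{\sqrt{1+|\nabla f|^2}}\right) = 0,$$
which is precisely the minimal surface equation. Therefore the graph of $\omega=f$ is a minimal hypersurface of $\Bbb{R}^{m+1}$, proving Corollary 9.1. The only mildly delicate step is keeping track of sign and normalization conventions when identifying $(9.2)$ with the classical mean curvature equation; this is pure bookkeeping and presents no genuine obstacle, since the conclusion $c=0$ forces the geometric mean curvature to vanish regardless of the specific proportionality constant.
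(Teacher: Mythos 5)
Your proposal is correct and follows essentially the same route as the paper: identify $\omega$ with a function $f$, rewrite $(9.1)$/$(9.2)$ as $\operatorname{div}\bigl(\nabla f/\sqrt{1+|\nabla f|^2}\bigr)=c$, and invoke Theorem 9.1 to conclude $c=0$, hence minimality of the graph. The extra bookkeeping you supply on the Hodge-star translation and sign conventions is a harmless elaboration of what the paper leaves implicit.
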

\begin{proof} As $p=0$, we may assume that $\omega =f$ for some
function $f$ on $\Bbb{R}^m $. Then (9.1) is equivalent to
\begin{equation}
\operatorname{div} \left( \frac{\nabla f}{\sqrt{1+|\nabla f|^2}}\right) =c
\tag{9.4}
\end{equation}
where $c\ $is a constant. Now the assertion follows from Theorem
9.1.
\end{proof}

\begin{corollary} Let $p=0$ and $m\leq 7$ in Theorem 9.1.
Then the graph of $\omega $ over $\Bbb{R}^m $ is a hyperplane in
$\Bbb{R}^{m+1}$.
\end{corollary}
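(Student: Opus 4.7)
The plan is to combine the previous corollary with the classical Bernstein-type theorem in dimensions $m\leq 7$. First, by Theorem 9.1 applied with $p=0$, the constant $c$ appearing in the constant mean curvature equation (9.4) must vanish. Hence $f$ satisfies
\[
\operatorname{div}\!\left(\frac{\nabla f}{\sqrt{1+|\nabla f|^{2}}}\right)=0
\]
on all of $\mathbb{R}^{m}$, which is precisely the statement that the entire graph $x_{m+1}=f(x_{1},\ldots,x_{m})$ is a minimal hypersurface in $\mathbb{R}^{m+1}$ (this is just Corollary 9.1).

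Next I would invoke the celebrated Bernstein theorem in its extended form: every entire minimal graph over $\mathbb{R}^{m}$ with $m\leq 7$ is affine, i.e.\ a hyperplane in $\mathbb{R}^{m+1}$. This result is due to Bernstein for $m=2$, de Giorgi for $m=3$, Almgren for $m=4$, and Simons for $5\leq m\leq 7$; the Bombieri--de Giorgi--Giusti counterexample shows the dimensional restriction $m\leq 7$ is sharp. Applying this theorem to the graph of $f$ produced above yields that $f$ is an affine function and its graph is a hyperplane, completing the proof.

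The main point of the argument is simply that Theorem 9.1 reduces the hypothesis ``constant mean curvature graph'' to ``minimal graph,'' at which stage the deep Bernstein--Simons result finishes the job. There is no real obstacle to overcome here beyond quoting this classical theorem; the entire content of the corollary is the translation from the $p$-form formulation (9.2) to the scalar minimal surface equation, performed in Corollary 9.1, together with the invocation of the known Bernstein theorem in the admissible range $m\leq 7$.
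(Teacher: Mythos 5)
Your proof is correct and follows exactly the paper's argument: reduce to the minimal surface equation via Theorem 9.1 (i.e., Corollary 9.1) and then invoke the classical Bernstein theorem for entire minimal graphs in dimensions $m\leq 7$, citing the same chain of results (Bernstein, de Giorgi, Almgren, Simons). No differences worth noting.
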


\begin{proof} This follows at once from Corollary 9.1 and Bernstein
Theorems for minimal graphs (cf. [Be], [Al], [Gi] and [Si]).
\end{proof}

\begin{corollary} Let $p=0$ and $|\nabla \omega| (x)\le \beta\, $ $($for all $x\in \Bbb{R}^m\, ,$ where $\beta >0$ is a constant$)$ in Theorem 9.1.
Then the graph of $\omega $ over $\Bbb{R}^m $ is a hyperplane in
$\Bbb{R}^{m+1}\, ,$ for all $m \ge 2$.
\end{corollary}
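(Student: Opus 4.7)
The plan is to combine Corollary 9.1 with the classical Moser--Bernstein theorem for minimal graphs of bounded slope.

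First, I would apply Theorem 9.1 to $\omega = f$ with $p = 0$; this forces the constant $(m-p) = m$-form $\xi_0$ to vanish, so $f$ satisfies the minimal surface equation
$$
\operatorname{div}\left(\frac{\nabla f}{\sqrt{1+|\nabla f|^2}}\right) = 0 \quad\text{on } \mathbb{R}^m.
$$
Equivalently, by Corollary 9.1, the graph $\Gamma_f = \{(x,f(x)) : x \in \mathbb{R}^m\} \subset \mathbb{R}^{m+1}$ is a complete minimal hypersurface. This reduces the corollary to showing that a minimal graph over $\mathbb{R}^m$ with uniformly bounded slope is a hyperplane.

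Next, I would invoke Moser's classical theorem (J.\ Moser, \emph{On Harnack's theorem for elliptic differential equations}, Comm.\ Pure Appl.\ Math.\ 14 (1961), 577--591): any $C^2$ solution of the minimal surface equation on all of $\mathbb{R}^m$ whose gradient is uniformly bounded is necessarily affine. The idea is that each partial derivative $u = \partial_i f$ satisfies a divergence-form linear elliptic equation obtained by differentiating the minimal surface equation; the hypothesis $|\nabla f| \le \beta$ ensures that the coefficients of the linearized operator are uniformly elliptic and bounded. Moser's Harnack inequality then forces every bounded entire solution $u$ of this equation to be constant, so each $\partial_i f$ is constant, i.e., $f(x) = a \cdot x + b$ for some $a \in \mathbb{R}^m$ and $b \in \mathbb{R}$. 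The graph of such an $f$ is an affine hyperplane in $\mathbb{R}^{m+1}$, for all $m \ge 2$.

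The main conceptual point -- and the reason one cannot simply quote the vanishing machinery of Section 7 -- is that the bound $|\nabla f| \le \beta$ gives only
$$
\int_{B_\rho(x_0)} \bigl(\sqrt{1+|\nabla f|^2} - 1\bigr)\, dx \;\le\; \tfrac{\beta^2}{2}\,\omega_m\,\rho^m \;=\; O(\rho^m),
$$
whereas Theorem 7.1 (with $p=0$) requires $o(\rho^m)$. The bounded-slope case is thus precisely the borderline situation missed by the finite-energy vanishing theorem, so one is forced to work at the level of the \emph{linearized} operator via Moser's Harnack inequality rather than at the level of the energy density directly. Once Moser's theorem is applied, the conclusion is immediate.
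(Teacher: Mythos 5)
Your proposal is correct and follows exactly the paper's route: Corollary 9.1 (via Theorem 9.1) reduces the statement to the minimal surface equation, and then Moser's Harnack theorem from the very same reference [Mo] gives that an entire solution with uniformly bounded gradient is affine. The paper's proof is just the two-line citation of these facts; you have simply spelled out the standard details of how the linearized equation and the Harnack inequality enter.
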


\begin{proof} This follows at once from Corollary 9.1 and Harnack's Theorem due to Moser
(cf. [Mo], p.591).
\end{proof}

In fact, we can give a further generalization.

\begin{theorem} Let $\omega $ be a differential form of
degree $p$ on $\Bbb{R}^m$, satisfying
\begin{equation}
d*\left( \frac{d\omega }{\sqrt{1+|d\omega |^2}}\right) =\xi
\tag{9.5}
\end{equation}
where $\xi$ is a differential form of degree $m-p$ on $\Bbb{R}^m$.
Suppose there exists an $(m-p)-$plane $\Sigma $ in $\Bbb{R}^m$,
with the volume element $d\Sigma $, such that $\xi|_\Sigma
=g(x)d\Sigma $, off a bounded set $K$ in $ \Sigma $, where $g$ is
a continuous function on $\Sigma\backslash K$ with $c = \inf_{x\in
\Sigma \backslash K} |g(x)|$ . Then $c=0$.
\end{theorem}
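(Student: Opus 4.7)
The plan is to adapt the Stokes--theorem argument of Theorem~9.1, the one new ingredient being that the hypothesis $|g|\ge c$ combined with continuity of $g$ forces $g$ to have constant sign on the relevant component of $\Sigma\setminus K$. Assume for contradiction that $c>0$. Since $K$ is bounded in $\Sigma\cong\mathbb{R}^{m-p}$, enclose it in a closed round ball $\widetilde K\subset\Sigma$. When $m-p\ge 2$, $\Sigma\setminus\widetilde K$ is connected; on this open set $g$ is continuous and $|g|\ge c>0$, so $g$ cannot vanish and by connectedness carries a single sign. Replacing $\omega$ by $-\omega$ if necessary (which sends $\xi$ to $-\xi$ and hence $g$ to $-g$), we may assume $g\ge c$ on $\Sigma\setminus\widetilde K$.

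Next, fix a point $x_0\in\Sigma$ and take $R$ large enough that $\widetilde K\subset B(x_0,R)\subset\Sigma$; set $A_R=B(x_0,R)\setminus\widetilde K$ with oriented boundary $\partial A_R=\partial B(x_0,R)-\partial\widetilde K$. Let $i\colon\Sigma\hookrightarrow\mathbb{R}^m$ be the inclusion; since $d$ commutes with $i^*$, pulling (9.5) back to $\Sigma$ and applying Stokes' theorem on $A_R$ gives
\[
\int_{A_R}g\,d\Sigma \;=\; \int_{A_R}i^*\xi \;=\; \int_{\partial A_R}i^*\!\Big(*\tfrac{d\omega}{\sqrt{1+|d\omega|^2}}\Big).
\]
The left side is at least $c\bigl(\omega_{m-p}R^{m-p}-\operatorname{Vol}(\widetilde K)\bigr)$, while the pointwise bound $\bigl|\tfrac{d\omega}{\sqrt{1+|d\omega|^2}}\bigr|\le 1$ (used in exactly the same way as in the proof of Theorem~9.1, with $*$ an isometry) dominates the right side by $(m-p)\omega_{m-p}R^{m-p-1}+\operatorname{Vol}(\partial\widetilde K)$. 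Dividing through by $R^{m-p}$ and sending $R\to\infty$ yields $c\,\omega_{m-p}\le 0$, contradicting $c>0$. The case $m-p=1$ is entirely parallel, carried out on one of the two unbounded half--lines of $\Sigma\setminus\widetilde K$ (on which $g$ has a single sign) with the fundamental theorem of calculus in place of Stokes.

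The main obstacle is the step that is trivial in Theorem~9.1: obtaining a lower bound on $\int_{A_R}g\,d\Sigma$. A priori $g$ could oscillate and the integral could be far smaller than $c\operatorname{Vol}(A_R)$, or even cancel. The crucial point is that the strict lower bound $|g|\ge c>0$ together with continuity of $g$ prevents $g$ from crossing zero, so $g$ has a single sign on each connected component of $\Sigma\setminus K$, and in particular on the unbounded component that absorbs $\widetilde K$. Once this sign control is in place, the Chern--type comparison of a volume term $\sim R^{m-p}$ against a boundary term $\sim R^{m-p-1}$ closes the argument exactly as in Theorem~9.1.
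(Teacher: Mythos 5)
Your proof is correct and follows essentially the same route as the paper: the paper likewise reduces to the case where $g$ has a fixed sign (via a two-case split using the intermediate value theorem, where you use connectedness of $\Sigma\setminus\widetilde K$) and then compares a volume term of order $R^{m-p}$ against a boundary term of order $R^{m-p-1}$, using a Lipschitz cutoff function supported in $B(x_0,2r)$ in place of your sharp annulus $A_R$ with Stokes' theorem. These are cosmetic differences; the estimates and the conclusion are identical.
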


\begin{proof} We consider two cases:

Case 1. $g$ assumes both positive and negative values: By the
intermediate value theorem, $g$ assumes value $0$ at some point,
and thus $c=\inf_{x\in\Sigma\backslash K}|g(x)|=0$.

Case 2. $g$ is a nonpositive or nonnegative function: Since
$K\subset \Sigma $ is bounded, choose a sufficiently large $r_0<r$
so that $K\subset B(x_0,r_0)$, where $B(x_0,r_0)$ is the ball of
radius $r_0$ centered at $x_0$ in $\Sigma \subset \Bbb{R}^m$. Let
$0\leq \psi \leq 1$ be the cut off function such
that $\psi \equiv 1$ on $B(x_0,r_0)$ and $\psi \equiv 0$ off
$B(x_0,2r)\subset \Sigma$, and $|\nabla \psi |\leq \frac{C_1}r$(cf. also
Lemma 1 in [We1]). Let $i:\Sigma\hookrightarrow \Bbb{R}^m$ be the
inclusion mapping. Multiplying (9.5) by $\psi $, and applying the
divergence theorem, we have

$$
\aligned
c\omega _{m-p}(r^{m-p}-r_0^{m-p}) &\leq
\bigg |\int_{B(x_0,r)\backslash B(x_0,r_0)}\psi (x)g(x)d\Sigma\bigg |\\
&= \bigg |\int_{B(x_0,r)\backslash B(x_0,r_0)}\psi i^{*}\xi\bigg |\\
&= \bigg |\int_{B(x_0,r)\backslash B(x_0,r_0)}\psi
di^{*}\big(*(\frac{d\omega
}{\sqrt{1+|d\omega |^2}})\big)\bigg |\\
&\leq \int_{B(x_0,2r)\backslash B(x_0,r_0)}|\nabla \psi
|\bigg |\frac{d\omega }{\sqrt{1+|d\omega |^2}} \bigg |\,  d\Sigma \\
&\qquad + \int_{\partial B(x_0,r_0)}\bigg |\frac{d\omega }{\sqrt{1+|d\omega |^2}}\bigg |\, dS\\
&\leq \omega
_{m-p}C_12^{m-p}r^{m-p-1} + (m-p)\omega _{m-p}r_0^{m-p-1}\endaligned
$$
where $\omega _{m-p}$ is the volume of the unit ball in $\Sigma $.
Hence
$$
0\leq c\omega _{m-p}(1-\frac{r_0^{m-p}}{r^{m-p}})\leq \frac{\omega
_{m-p}C_12^{m-p}}r +
\frac{(m-p)\omega _{m-p}r_0^{m-p-1}}{r^{m-p}}
$$
implies that $c=0$ by letting $r\rightarrow \infty $.
\end{proof}

\begin{corollary} There does not exist a solution of $($9.5$)$
such that $\xi|_\Sigma =g(x)d\Sigma $ , off a bounded set $K$ in
some $(m-p)-$plane $\Sigma $ in $\Bbb{R}^m$ with $c > 0$, where
$g$ is a continuous real-valued $($not necessary nonnegative or
nonpositive$)$ function, and $c = \inf_{x\in \Sigma \backslash K}
|g(x)|$.

\end{corollary}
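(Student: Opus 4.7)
The plan is to argue by contradiction and apply Theorem~9.2 directly. Suppose, for the sake of contradiction, that there exists a solution $\omega$ of (9.5) together with an $(m-p)$-plane $\Sigma$ in $\Bbb{R}^m$ and a bounded set $K\subset \Sigma$ such that $\xi|_\Sigma = g(x)\,d\Sigma$ off $K$ for some continuous real-valued $g$ on $\Sigma\setminus K$, and such that
\[
c \;=\; \inf_{x\in \Sigma\setminus K} |g(x)| \;>\; 0.
\]
This would place us precisely within the hypotheses of Theorem~9.2, whose conclusion is $c=0$. This contradicts $c>0$, so no such solution can exist.

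The only potential subtlety is that Theorem~9.2 is stated in two cases according to the sign behavior of $g$, and I should make sure both remain available under the weaker statement of the corollary (where $g$ is merely continuous and real-valued, not assumed to keep a fixed sign). Since Case~1 of Theorem~9.2 treats exactly the mixed-sign situation (yielding $c=0$ via the intermediate value theorem) and Case~2 treats the fixed-sign situation (yielding $c=0$ via the cutoff/divergence estimate), the two cases together cover any continuous $g$. Thus Theorem~9.2 applies verbatim, and the corollary follows with no additional work.

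In other words, Corollary~9.2 is the contrapositive reformulation of Theorem~9.2: since the theorem forces the infimum $c$ to vanish, there can be no solution for which the infimum is strictly positive. The main — and indeed only — ``obstacle'' is the verification that the hypotheses of Theorem~9.2 are met on the nose, which is immediate from the statement of the corollary. No new estimates, test-form constructions, or monotonicity arguments are required beyond invoking the already-established Theorem~9.2.
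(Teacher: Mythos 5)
Your proof is correct and is exactly the intended argument: the corollary is the immediate contrapositive of Theorem~9.2, whose two cases (mixed sign via the intermediate value theorem, fixed sign via the cutoff estimate) together cover every continuous real-valued $g$, forcing $c=0$ and hence excluding $c>0$. The paper offers no separate proof precisely because nothing beyond this invocation is needed.
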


\begin{corollary} Let $f$ be a function satisfying
$$
\operatorname{div}\left(\frac{\nabla f}{\sqrt{1+|\nabla f|^2}}\right)=c
$$
off a bounded subset $K\subset \Bbb{R}^m$, where $c\equiv
$const. Then $c=0$. In particular, every graph of $f$ of constant
mean curvature off a cylinder $\Bbb{R}^m\backslash
(B(x_0,r_0)\times \Bbb{R})$ is minimal.
\end{corollary}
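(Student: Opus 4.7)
The plan is to read Corollary 9.3 as a direct specialization of Theorem 9.2 to the case $p=0$, where $f$ is a $0$-form on $\mathbb{R}^m$ and the equation with the Hodge star puts the right-hand side into the top-degree slot. First I would rewrite the given constant mean curvature equation in the Hodge-star form (9.5): with $p=0$ and $\omega = f$, one has $\ast df$ an $(m-1)$-form, so
\[
d\ast\!\Big(\frac{df}{\sqrt{1+|df|^2}}\Big) \;=\; \operatorname{div}\!\Big(\frac{\nabla f}{\sqrt{1+|\nabla f|^2}}\Big)\, dx_1\wedge\cdots\wedge dx_m \;=\; c\,dx_1\wedge\cdots\wedge dx_m
\]
off the bounded set $K$. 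This is exactly (9.5) with $\xi = c\,dx_1\wedge\cdots\wedge dx_m$, an $(m-p)$-form since $m-p=m$.

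Next I would invoke Theorem 9.2 with the natural choice $\Sigma=\mathbb{R}^m$ (the only $(m-p)$-plane available when $p=0$), whose volume element is $d\Sigma = dx_1\wedge\cdots\wedge dx_m$. Then $\xi|_\Sigma = g(x)\,d\Sigma$ with the continuous function $g(x)\equiv c$ on $\Sigma\setminus K$, so
\[
\inf_{x\in\Sigma\setminus K}|g(x)| \;=\; |c|.
\]
Theorem 9.2 concludes $|c|=0$, i.e.\ $c=0$. This is the whole content of the first assertion.

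For the ``in particular'' statement, recall that the graph of $f$ over $\mathbb{R}^m$ has constant mean curvature $H$ off the cylinder $B(x_0,r_0)\times\mathbb{R}$ precisely when
\[
\operatorname{div}\!\Big(\frac{\nabla f}{\sqrt{1+|\nabla f|^2}}\Big) \;=\; mH
\]
on $\mathbb{R}^m\setminus B(x_0,r_0)$. Taking $K=B(x_0,r_0)$ and $c=mH$, the first part forces $c=0$, hence $H=0$, so the graph is minimal.

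There is really no analytical obstacle here: the corollary is a one-line specialization of Theorem 9.2. The only point worth being careful about is the bookkeeping between the divergence form of the CMC equation and the Hodge-star form (9.5), to confirm that the constant on the right really does produce $\xi$ of the form $g(x)\,d\Sigma$ with $g\equiv c$, so that the hypothesis $c=\inf|g|$ of Theorem 9.2 is meaningful and nonzero would contradict the conclusion.
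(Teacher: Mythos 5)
Your proposal is correct and follows exactly the paper's route: the paper's proof is the one-line observation that the corollary follows from Theorem 9.2 with $p=0$ (so $\omega=f$, $\Sigma=\mathbb{R}^m$, $g\equiv c$) and, for the second assertion, $K=B(x_0,r_0)$. Your version merely spells out the bookkeeping between the divergence form and the Hodge-star form (9.5), which is accurate.
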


\begin{proof} This follows at once from Theorem 9.2 in which
$\omega =df$ and $p=0$ . In particular, we choose $K=B(x_0,r_0)$.
\end{proof}

\begin{remark} This result, Corollary 9.4, recaptures
Corollary 9.1, a theorem of Chern, in which $K$ is an empty set.
Notice that Chern's result was also generalized to graphs with
higher codimension and parallel mean curvature in Euclidean space
by Salavessa [Sa1].
\end{remark}

Next we consider the following equation
\begin{equation}
\delta \left( \frac{d\sigma }{\sqrt{1-|d\sigma |^2}}\right) =\rho
_0 \tag{9.6}
\end{equation}
which generalizes the constant mean curvature equation for
spacelike hypersurfaces.

\begin{theorem} Let $\sigma $ be a differential form of
degree $q$ $($$\leq m-1$$)$ on $\Bbb{R}^m$, satisfying
\begin{equation}
d*\left( \frac{d\sigma }{\sqrt{1-|d\sigma |^2}}\right) =\tau _0
\tag{9.7}
\end{equation}
where $\tau _0$ is a constant $(m-q)-$form. If
\begin{equation}
\frac 1{\sqrt{1-|d\sigma |^2}}=o(r)  \tag{9.8}
\end{equation}
where $r$ is the distance from the origin, then $\tau _0=0$.
\end{theorem}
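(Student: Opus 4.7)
My plan is to mimic the proof of Theorem 9.1, but with the $(1-|d\sigma|^2)^{-1/2}$-type control replacing the automatic bound $\bigl|\frac{d\omega}{\sqrt{1+|d\omega|^2}}\bigr|\le 1$ that was available in the plus-sign case. The hypothesis $\frac{1}{\sqrt{1-|d\sigma|^2}}=o(r)$ will supply exactly the missing a priori control on the boundary term.

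First I would fix an $(m-q)$-plane $\Sigma\subset\mathbb{R}^m$ with volume element $d\Sigma$, write $\tau_0|_\Sigma=c\,d\Sigma$ for some constant $c$, and let $i\colon\Sigma\hookrightarrow\mathbb{R}^m$ be the inclusion. For any $x_0\in\Sigma$ and any $r>0$, applying Stokes' theorem on the ball $B(x_0,r)\subset\Sigma$ and using (9.7) yields
\[
|c|\,\omega_{m-q}\,r^{m-q}=\Bigl|\int_{B(x_0,r)}i^{\ast}\tau_0\Bigr|=\Bigl|\int_{\partial B(x_0,r)}i^{\ast}\!\ast\frac{d\sigma}{\sqrt{1-|d\sigma|^2}}\Bigr|\le \int_{\partial B(x_0,r)}\frac{|d\sigma|}{\sqrt{1-|d\sigma|^2}}\,dS,
\]
exactly as in the derivation of (9.3), where $\omega_{m-q}$ denotes the volume of the unit ball in $\Sigma$.

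Next I would bound the integrand on $\partial B(x_0,r)$. Since the equation only makes sense when $|d\sigma|<1$, we have
\[
\frac{|d\sigma|}{\sqrt{1-|d\sigma|^2}}\le\frac{1}{\sqrt{1-|d\sigma|^2}}.
\]
Points on $\partial B(x_0,r)$ have Euclidean distance from the origin at least $r-|x_0|$, so the growth hypothesis (9.8) gives $\frac{1}{\sqrt{1-|d\sigma|^2}}=o(r)$ uniformly on $\partial B(x_0,r)$ as $r\to\infty$. Combined with the surface-area identity $\mathrm{vol}(\partial B(x_0,r))=(m-q)\omega_{m-q}\,r^{m-q-1}$ in $\Sigma$, this yields
\[
|c|\,\omega_{m-q}\,r^{m-q}\le (m-q)\,\omega_{m-q}\,r^{m-q-1}\cdot o(r)=o(r^{m-q}).
\]
Dividing by $r^{m-q}$ and letting $r\to\infty$ forces $c=0$. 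Since $\Sigma$ was arbitrary, $\tau_0$ restricts to zero on every $(m-q)$-plane and hence $\tau_0=0$.

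The only real subtlety is the third step: one must verify that the growth condition in (9.8), which is a pointwise statement on $\mathbb{R}^m$ in terms of the radial coordinate $r(x)=|x|$, transfers to a uniform bound $o(r)$ on $\partial B(x_0,r)$ inside the affine plane $\Sigma$. This is automatic once $x_0$ is fixed, because all points of $\partial B(x_0,r)\subset\mathbb{R}^m$ have $|x|\ge r-|x_0|\to\infty$. Beyond this book-keeping, the argument is a direct analogue of Theorem 9.1 with the sign in the denominator flipped and the uniform bound $1$ replaced by the hypothesis (9.8).
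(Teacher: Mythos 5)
Your proof is correct and follows essentially the same route as the paper: restrict $\tau_0$ to an arbitrary $(m-q)$-plane $\Sigma$, apply Stokes' theorem on balls $B(x_0,r)\subset\Sigma$, and bound the boundary integrand by $\sup_{\partial B(x_0,r)}\bigl(1-|d\sigma|^2\bigr)^{-1/2}=o(r)$ to force $c=0$. The remark about transferring the pointwise $o(r)$ condition to a uniform bound on $\partial B(x_0,r)$ is a small clarification the paper leaves implicit, but it changes nothing substantive.
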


\begin{proof} Obviously, for every $(m-q)-$plane $\Sigma $ in
$\Bbb{R}^m$, there exists a volume element $d\Sigma $ of $\Sigma
$, such that $\tau _0|_\Sigma =c\, d\Sigma $. Let
$i:\Sigma\hookrightarrow \Bbb{R}^m$ be the inclusion mapping. For
every ball $ B(x_0,r)$ of radius $r$ centered at $x_0$ in $\Sigma
\subset \Bbb{R}^m$, and its boundary $\partial B(x_0,r)$, by using
(9.7) and Stokes' Theorem, we have
$$
\aligned |c|\omega _{m-q}r^{m-q} &=\bigg |\int_{B(x_0,r)}c\, d\Sigma
\bigg |\\
&=\bigg |\int_{\partial
B(x_0,r)}i^{*}\big(*(\frac{d\sigma }{\sqrt{1-|d\sigma |^2}})\big)\bigg | \\
&\leq \int_{\partial B(x_0,r)}\bigg |\frac{d\sigma }{\sqrt{1-|d\sigma
|^2}} \bigg |\,  dS \\
&\leq (m-q)\sup_{\partial B(x_0,r)}\{\frac 1{\sqrt{1-|d\sigma |^2}
}\}\omega _{m-q}r^{m-q-1}\endaligned
$$
where $\omega _{m-q}$ is the volume of the unit ball in $\Sigma $.
Hence
$$
|c|\leq \frac{m-q}r\sup_{\partial B(x_0,r)}\{\frac
1{\sqrt{1-|d\sigma |^2}}\}
$$
implies that $c=0$ by letting $r\rightarrow \infty $.
\end{proof}

\begin{remark} (1) When $q=0$, (9.6) describes spacelike
graphic hypersurface with constant mean curvature. It is known
that $\frac 1{\sqrt{1-|d\sigma |^2}}$ is bounded iff the Gauss
image of the hypersurface is bounded (cf. [Xi2,3]). Such kind of
Chern type results under growth conditions were obtained in [Do],
[Sa2] for spacelike graphs as well. (2) A similar generalized Chern type result
can be established for the following more general equation
$$
\delta ^\nabla \big(F^{\prime }(\frac{|d^\nabla \sigma |^2}2)d^\nabla
\sigma \big) = \rho _0
$$
\end{remark}
Using a different technique or idea (cf. [We1,2], [LWW]), one can
extend the above results to complete noncompact manifold $M$ that
has the global doubling property, i.e., $\exists D(M)
> 0\, $ such that $\forall r
>0\, ,$ $\forall x \in M\,$
\begin{equation}
 Vol(B(x,2r)) \le D(M) Vol(B(x,r))\tag{9.9}
\end{equation}
Examples of complete manifolds with the global doubling property
include complete noncompact manifolds of nonnegative Ricci
curvature, in particular Euclidean space $\Bbb{R}^m$.
\begin{theorem} Let $\omega $ be a differential form of
degree $p$ on $M$ that  has the global doubling property, and
satisfies
\begin{equation}
d*\left( \frac{d\omega }{\sqrt{1+|d\omega |^2}}\right) =\xi
\tag{9.10}
\end{equation}
where $\xi$ is a differential form of degree $m-p$ on $M$. Suppose
there exists an $(m-p)-$ dimensional submanifold $\Sigma $ in $M$,
with the volume element $d\Sigma $, such that $\xi|_\Sigma
=g(x)d\Sigma $, off a bounded set $K$ in $ \Sigma $, where $g$ is
a continuous function on $\Sigma\backslash K$ with $c = \inf_{x\in
\Sigma \backslash K} |g(x)|$ . Then $c=0$.
\end{theorem}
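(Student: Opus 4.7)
The plan is to adapt the proof of Theorem 9.2 almost verbatim, replacing the explicit Euclidean volume growth $\omega_{m-p}\rho^{m-p}$ with the intrinsic volume function $V(\rho):=\text{Vol}_\Sigma(B(x_0,\rho))$ and invoking the global doubling property in place of the Euclidean scaling $(2r)^{m-p}=2^{m-p}r^{m-p}$. As a preliminary reduction I would dispose of the case in which $g$ changes sign on $\Sigma\setminus K$: any path in the unbounded component of $\Sigma\setminus K$ joining a point where $g>0$ to one where $g<0$ forces $g$ to vanish somewhere by the intermediate value theorem, so $c=\inf|g|=0$ is immediate. It therefore suffices to treat the case in which $g$ has constant sign, and by symmetry I may assume $g\ge 0$.

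Fix $x_0\in\Sigma$ and choose $r_0$ large enough that $K\subset B(x_0,r_0)\subset\Sigma$. For each $r>r_0$ I would take a Lipschitz cutoff $\psi$ on $\Sigma$ of the same type used in Theorem 9.2: $0\le\psi\le 1$, $\psi\equiv 1$ on $B(x_0,r)$, $\psi\equiv 0$ off $B(x_0,2r)$, and $|\nabla\psi|\le C_1/r$ (cf. Lemma 1 in [We1]). Multiplying (9.10) by $\psi$, pulling back via the inclusion $i:\Sigma\hookrightarrow M$, and applying the divergence theorem on $\Sigma$ exactly as in Theorem 9.2 (splitting the integral $\int_\Sigma\psi\,i^*\xi$ over $B(x_0,r_0)$ and its complement, and using the universal bound $|d\omega|/\sqrt{1+|d\omega|^2}\le 1$) yield
$$c\,(V(r)-V(r_0))\;\le\;\frac{C_1}{r}\,V(2r)\;+\;\text{Vol}_\Sigma(\partial B(x_0,r_0)).$$
Invoking the global doubling property in the form $V(2r)\le D\,V(r)$ and dividing through by $V(r)$ gives
$$c\left(1-\frac{V(r_0)}{V(r)}\right)\;\le\;\frac{C_1 D}{r}\;+\;\frac{\text{Vol}_\Sigma(\partial B(x_0,r_0))}{V(r)}.$$
Since $M$ (and hence $\Sigma$, under the natural reading of the hypothesis) is complete and noncompact, $V(r)\to\infty$ as $r\to\infty$, and the right-hand side tends to $0$. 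Hence $c=0$, as desired.

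The main obstacle is a matching-of-hypotheses issue rather than a new analytic difficulty: the statement asserts doubling on $M$, whereas the argument literally needs a doubling estimate for the intrinsic balls of the submanifold $\Sigma$. This is resolved either by reading the hypothesis as implying doubling on the induced geometry of $\Sigma$ (a mild extra assumption that holds for the standard examples listed by the authors, such as Euclidean space and manifolds of nonnegative Ricci curvature with reasonable $\Sigma$), or by working with $M$-balls throughout and combining the doubling estimate on $M$ with a coarea/transversality bound comparing $\text{Vol}_\Sigma(B_M(x_0,r)\cap\Sigma)$ to $\text{Vol}_M(B_M(x_0,r))$. Apart from this bookkeeping, every individual estimate is a direct transcription of the corresponding step in Theorem 9.2.
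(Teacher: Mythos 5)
Your proposal follows the paper's own proof of this theorem essentially step for step: the same reduction to constant-sign $g$ via the intermediate value theorem, the same cutoff-plus-Stokes argument using the universal bound $|d\omega|/\sqrt{1+|d\omega|^2}\le 1$, and the same division by $V(r)$ followed by the doubling inequality; you even correctly flag the $M$-versus-$\Sigma$ mismatch in the doubling hypothesis, which the paper silently glosses over (its displayed volumes are $\Sigma$-volumes while (9.9) is stated for $M$). The one quibble is that $V(r)\to\infty$ does not follow from completeness and noncompactness alone --- the paper deduces infinite volume from the doubling property via Lemma 5.1 of [LWW] --- but since you invoke doubling throughout, this is a matter of citation rather than substance.
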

\begin{proof} Proceed as in the proof of Theorem 9.2, it suffices
to show the result holds for $g \ge 0$ or $g \le 0\, .$ Let
$K\subset B(x_0,r_0)$, where $B(x_0,r_0)$ is the geodesic ball of
radius $r_0$ in $M\, ,$ centered at $x_0\, .$ Let $0\leq \psi \leq
1$ be the cut off function such that $\psi \equiv 1$ on
$B(x_0,r_0)$ and $\psi \equiv 0$ off $B(x_0,2r)$, and $|\nabla
\psi |\leq \frac{C_1}r$(cf. also Lemma 1 in [We1]). Let
$i:\Sigma\hookrightarrow M$ be the inclusion mapping. Then
multiplying both sides of the equation (9.10) by $\psi\, ,$
integrating over the annulus $B(x_0,2r)\backslash B(x_0, r_0)
(\subset M\backslash K )\, ,$ and applying Stokes' Theorem, we
have
\begin{equation}
\aligned
 c(Vol(B(x_0, r)) - Vol(B(x_0, r_0)) &\leq
\bigg |\int_{B(x_0,r)\backslash B(x_0,r_0)}\psi (x)g(x)\, d\Sigma\bigg |\\
& \le \bigg| \int _{B(x_0,2r)\backslash B(x_0, r_0)} \psi g\, (x)
\, d\Sigma\bigg| \\
&= \bigg |\int_{B(x_0,2r)\backslash B(x_0,r_0)}\psi i^{*}\xi\bigg |\\
&= \bigg|\int_{B(x_0,2r)\backslash B(x_0,r_0)}\psi
di^{*}\big(*(\frac{d\omega
}{\sqrt{1+|d\omega |^2}})\big)\bigg|\\
&\leq \int_{B(x_0,2r)\backslash B(x_0,r_0)}|\nabla \psi
|\bigg |\frac{d\omega }{\sqrt{1+|d\omega |^2}}\bigg |d\Sigma\\
&\qquad + \int_{\partial B(x_0,r_0)}\bigg |\frac{d\omega }{\sqrt{1+|d\omega |^2}}\bigg |\,  dS\\
&\leq Vol(\partial B(x_0, r_0)) +  \frac {C_1 Vol(B(x_0, 2r))}r
\endaligned \tag{9.11}
\end{equation}
where $dS$ is the area element of $\partial B(x_0,r)\, .$ Hence,
dividing (9.11) by $Vol(B(x_0, r))\, ,$  one has
$$
c (1-\frac {VolB(x_0, r_0)}{Vol(B(x_0, r))})  \le \frac {
Vol(\partial B(x_0, r_0))}{Vol(B(x_0, r))} + \frac { C_1
D(M)}{r}\rightarrow 0
$$
as $r\to\infty\, ,$ since $M$ has infinite volume (by Lemma 5.1 in [LWW]).

\end{proof}

\section{Appendix: A Theorem on $\mathcal{E}_{F,g}$-energy Growth}
In this Appendix, we provide a theorem on $\mathcal{E}_{F,g}$-energy growth, with examples (cf. Examples 10.1 and 10.2). These in particular, imply that our growth assumptions (5.1) and (5.4) in Liouville type results are weaker than the existing growth conditions such as {\it finite} $\mathcal{E}_{F,g}$-energy, {\it slowly divergent} $\mathcal{E}_{F,g}$-energy (cf. (5.3)), (10.6), and (10.7).

We say that $f(r) \sim g(r)$ as  $r \to \infty \, ,$ if $\, \limsup _{r \to \infty} \frac {f(r)}{g(r)} = 1\, ,$ and $f(r) \nsim g(r)\, $ as  $\, r \to \infty \, ,$ otherwise.
We say that $f(r)\asymp g(r)$ for large $r\, ,$ if there exist positive constants $k_1$ and $k_2$ such that $k_1 g(r) \le f(r) \le k_2 g(r)\, $ for all large $r\, ,$ and $f(r)\not\asymp g(r)$ for large $r$ otherwise.

\begin{lemma}
Let $\psi(r) > 0$ be a continuous function such that
\begin{equation}
\int_{\rho_0}^\infty \frac{dr}{r\psi (r)}=+\infty\tag{5.2}
\end{equation}
for some $\rho_0>0\, .$
Then

$($i$)$ $\psi(r)$ can not go to infinity faster than $r^\lambda\, ,$ i.e., $\lim_{r \to \infty} \frac {\psi(r)}{r^\lambda} \ne \infty \, ,$ for any $\lambda > 0\, .$

$($ii$)$ If $\lim_{r \to \infty} \frac {\psi(r)}{r^\lambda}\, $ exists for some $\lambda > 0\, ,$ then
\begin{equation}
\lim_{r \to \infty} \frac {\psi(r)}{r^\lambda} = 0\, ,\tag{10.1}
\end{equation}
$f(r) \nsim g(r)\, ,$ and $\psi(r) \, \not\asymp r^{\lambda}\, .$
\end{lemma}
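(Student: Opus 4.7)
The plan is to prove both parts by contradiction, deriving in each case an upper bound on $\psi(r)$ from below that forces the integral $\int_{\rho_0}^\infty \frac{dr}{r\psi(r)}$ to converge, contradicting hypothesis (5.2). The key observation is that convergence of $\int^\infty r^{-(\lambda+1)}\,dr$ for every $\lambda > 0$ is what drives both contradictions; the assumption (5.2) therefore prevents $\psi(r)$ from being comparable (in the relevant senses) to any positive power of $r$.

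For part (i), I would argue as follows. Suppose for contradiction that $\lim_{r\to\infty}\psi(r)/r^\lambda = \infty$ for some $\lambda > 0$. Then for any fixed $M > 0$, there exists $R > \rho_0$ such that $\psi(r) \ge M r^\lambda$ for all $r \ge R$. Consequently
\begin{equation*}
\int_{\rho_0}^\infty \frac{dr}{r\psi(r)} = \int_{\rho_0}^{R} \frac{dr}{r\psi(r)} + \int_{R}^\infty \frac{dr}{r\psi(r)} \le \int_{\rho_0}^{R} \frac{dr}{r\psi(r)} + \frac{1}{M\lambda R^\lambda} < \infty,
\end{equation*}
since the first integral is finite by continuity and positivity of $\psi$ on the compact interval $[\rho_0, R]$. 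This contradicts (5.2), proving (i).

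For part (ii), suppose $L := \lim_{r\to\infty}\psi(r)/r^\lambda$ exists; by (i) we have $L < \infty$. If $L > 0$, then $\psi(r) \ge (L/2)r^\lambda$ for all sufficiently large $r$, and the same integral estimate as above yields $\int_{\rho_0}^\infty \frac{dr}{r\psi(r)} < \infty$, contradicting (5.2). Hence $L = 0$, establishing (10.1). The remaining assertions are then immediate from the definitions provided in the excerpt: since $\limsup_{r\to\infty}\psi(r)/r^\lambda = 0 \ne 1$, we have $\psi(r) \nsim r^\lambda$; and if there existed $k_1 > 0$ with $\psi(r) \ge k_1 r^\lambda$ for all large $r$, then $\psi(r)/r^\lambda \ge k_1$ eventually, contradicting (10.1), so $\psi(r) \not\asymp r^\lambda$.

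There is no real obstacle here; the argument is elementary and essentially a one-line integral estimate plus an appeal to the two definitions of $\sim$ and $\asymp$ stated just before the lemma. The only point requiring mild care is that the conclusion of (i) asserts nonexistence of the value $+\infty$ for the limit, not nonexistence of the limit itself: $\psi(r)/r^\lambda$ is permitted to oscillate or tend to a finite value, and examples like $\psi(r) = (\log r)^q$ (for any $q > 0$) show that (5.2) is genuinely compatible with polylogarithmic growth, confirming that the conclusions of the lemma cannot be strengthened in this direction.
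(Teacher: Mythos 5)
Your proof is correct and follows essentially the same route as the paper's: assume the limit is $+\infty$ (or a nonzero finite value $L$), bound $\psi(r)$ below by a constant multiple of $r^\lambda$ for large $r$, and conclude that $\int^\infty \frac{dr}{r\psi(r)} \le C\int^\infty r^{-(1+\lambda)}\,dr < \infty$, contradicting (5.2). Your explicit derivation of the $\nsim$ and $\not\asymp$ conclusions from $(10.1)$ is a small but welcome addition, since the paper's proof leaves those two assertions implicit.
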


\begin{proof} Suppose on the contrary, i.e. $\lim_{r \to \infty} \frac {\psi(r)}{r^\lambda} = c < \infty,\, \text{where}\quad  c \ne 0\, $ (resp. $\quad \lim_{r \to \infty} \frac {\psi(r)}{r^\lambda} = \infty\, $ ). Then there would exist $\rho_1 > 0$ such that if $r \ge \rho_1,\quad \psi(r) > \frac c2 r^{\lambda}\, (\text {resp.}\quad   \psi(r) > k r^{\lambda}\, , \operatorname{where}\, k > 0\, $ is a constant.$)\, $ This would lead to $$ \int_{\rho_1}^\infty \frac{dr}{r\psi (r)}\le \frac 2c \int_{\rho_1}^\infty \frac{dr}{r^{1+\lambda}}\, \bigg(resp.\quad   k \int_{\rho_1}^\infty \frac{dr}{r^{1+\lambda}}\, \bigg) < \infty\, ,$$
contradicting (5.2), by the continuity of $\psi (r)$ if $\rho_0 < \rho_1\, .$
\end{proof}

\begin{theorem}
Let $\omega \in A^p(\xi)$ have slowly divergent $\mathcal{E}_{F,g}-$energy. That is,
\begin{equation}
\lim_{\rho\rightarrow \infty }\int_{B_\rho(x_0)}\frac{F(\frac{|\omega
|^2}2)}{\psi (r(x))} \, dv <\infty\tag{5.3}
\end{equation}
for some continuous function $\psi(r) > 0$ satisfying (5.2). Then

$($i$)$  For any $\lambda > 0\, ,$ $\lim_{r \to \infty} \frac {\psi(r)}{r^\lambda} \ne \infty \, .$

$($ii$)$ If $\lim_{r \to \infty} \frac {\psi(r)}{r^\lambda}\, $ exists for some $\lambda > 0\, ,$ then \begin{equation}
\int_{B_\rho(x_0)}F(\frac{|\omega |^2}2)\, dv = o(\rho^\lambda )\quad \text{as
} \rho\rightarrow \infty.\tag{5.1}\end{equation}\end{theorem}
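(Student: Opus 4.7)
The plan is to prove Theorem 10.1 in two steps, leveraging Lemma 10.1 which has just been established.

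\textbf{Part (i):} This is an immediate restatement of Lemma 10.1(i). Since $\omega$ having slowly divergent $\mathcal{E}_{F,g}$-energy requires $\psi$ to satisfy (5.2), Lemma 10.1(i) applies verbatim and yields $\lim_{r\to \infty} \psi(r)/r^{\lambda} \ne \infty$ for every $\lambda > 0$.

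\textbf{Part (ii):} The key input is Lemma 10.1(ii), which upgrades the hypothesis ``$\lim_{r\to\infty} \psi(r)/r^{\lambda}$ exists'' to the much stronger conclusion
\[
\lim_{r\to\infty}\frac{\psi(r)}{r^{\lambda}}=0,
\]
i.e.\ $\psi(r) = o(r^{\lambda})$ as $r\to\infty$. This is precisely the decay I will exploit. From (5.3) set
\[
C \;:=\; \lim_{\rho\to\infty}\int_{B_{\rho}(x_{0})}\frac{F(|\omega|^{2}/2)}{\psi(r(x))}\,dv \;<\;\infty.
\]
Given any $\varepsilon>0$, pick $R>\rho_{0}$ so large that $\psi(r)\le \varepsilon r^{\lambda}$ for all $r\ge R$. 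Then for every $\rho\ge R$ and every $x$ in the annulus $B_{\rho}(x_{0})\setminus B_{R}(x_{0})$ one has the pointwise bound $\psi(r(x))\le \varepsilon\, r(x)^{\lambda}\le \varepsilon\rho^{\lambda}$, so I will write
\[
F\!\left(\tfrac{|\omega|^{2}}{2}\right) \;=\; \frac{F(|\omega|^{2}/2)}{\psi(r)}\cdot \psi(r) \;\le\; \varepsilon\,\rho^{\lambda}\,\frac{F(|\omega|^{2}/2)}{\psi(r)}
\]
on that annulus, and integrate to obtain
\[
\int_{B_{\rho}(x_{0})\setminus B_{R}(x_{0})} F\!\left(\tfrac{|\omega|^{2}}{2}\right)dv \;\le\; \varepsilon\,\rho^{\lambda}\cdot C.
\]

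\textbf{Conclusion step:} Splitting the integral over $B_{\rho}(x_{0})$ into the fixed piece over $B_{R}(x_{0})$ (finite, independent of $\rho$) and the annular piece just estimated, and then dividing through by $\rho^{\lambda}$, gives
\[
\frac{1}{\rho^{\lambda}}\int_{B_{\rho}(x_{0})}F\!\left(\tfrac{|\omega|^{2}}{2}\right)dv
\;\le\;\frac{1}{\rho^{\lambda}}\int_{B_{R}(x_{0})}F\!\left(\tfrac{|\omega|^{2}}{2}\right)dv + \varepsilon C.
\]
Letting $\rho\to\infty$ (with $R$ fixed) makes the first term vanish, hence $\limsup_{\rho\to\infty}\rho^{-\lambda}\!\int_{B_{\rho}(x_{0})}F(|\omega|^{2}/2)\,dv \le \varepsilon C$. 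Since $\varepsilon>0$ was arbitrary, the $\limsup$ is zero, which is exactly (5.1).

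\textbf{Main obstacle.} There is no serious analytic obstacle; the argument is a clean $\varepsilon$-decomposition of the integral using the decay $\psi(r)=o(r^{\lambda})$ against the finite weighted integral (5.3). The only subtle point worth emphasizing explicitly in the write-up is the logical gap that Lemma 10.1(ii) bridges: the hypothesis merely asserts the limit \emph{exists}, but the divergence condition (5.2) forces that limit to vanish, and it is this vanishing—rather than mere existence—that drives the whole estimate.
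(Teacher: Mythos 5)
Your proof is correct and follows essentially the same route as the paper: both arguments invoke Lemma 10.1(ii) to upgrade mere existence of $\lim_{r\to\infty}\psi(r)/r^{\lambda}$ to $\psi(r)=o(r^{\lambda})$, then split $\int_{B_{\rho}}F(|\omega|^{2}/2)\,dv$ into a fixed ball plus an annulus, bound $F$ on the annulus by $\varepsilon\rho^{\lambda}\,F/\psi$, and control the resulting weighted integral by the finite limit in (5.3). The only cosmetic difference is that the paper calibrates $\varepsilon$ in advance (using $\varepsilon/(2(L+1))$) to land exactly on $\varepsilon$, whereas you take a $\limsup$ and let $\varepsilon\to 0$ afterward; the two are interchangeable.
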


\begin{proof}
In view of Lemma 10.1 and (10.1), we have for every $\epsilon > 0\, ,$
there exists $\rho_2 > 0\, ,$ such that if $r > \rho_2\, ,$ then \begin{equation}\psi(r) < \frac {\epsilon}{2(L+1)}r^{\lambda}\, ,\tag{10.2}
\end{equation} where $L := \lim_{\rho\rightarrow \infty }\int_{B_\rho(x_0)}\frac{F(\frac{|\omega
|^2}2)}{\psi (r(x))} dv\, $ (by assumption $0 \le L < \infty\, $). Hence by the definition of $L\, ,$ there exists $\rho_3>0$ such that if $\rho > \rho_3\, ,$ then
\begin{equation}
\int_{B_\rho(x_0)}\frac{F(\frac{|\omega
|^2}2)}{\psi (r(x))} dv < L + 1\tag{10.3}
\end{equation}
Since $\lim_{\rho \to \infty} \frac 1{\rho^{\lambda}} \int_{B_{\rho_2}(x_0)}F(\frac{|\omega |^2}2)\, dv =0$ , we have for every
$\epsilon > 0\, ,$
there exists $\rho_4 > 0\, ,$ such that if $\rho > \rho_4\, ,$ then \begin{equation}\frac 1{\rho^{\lambda}}\int_{B_{\rho_2}(x_0)}F(\frac{|\omega |^2}2)\, dv < \frac {\epsilon}{2}\, .\tag{10.4}
\end{equation}
It follows that for every
$\epsilon > 0\, ,$ one can choose $\rho_5 = \max \{\rho_2, \rho_3, \rho_4\}\, ,$ such that if $\rho > \rho_5\, ,$ then via (10.2) (10.3) and (10.4), we have
\begin{equation}
\aligned \frac 1{\rho^{\lambda}}\int_{B_{\rho}(x_0)}F(\frac{|\omega |^2}2)\, dv & = \frac 1{\rho^{\lambda}}\int_{B_{\rho_2}(x_0)}F(\frac{|\omega |^2}2)\, dv + \int_{B_{\rho}(x_0)\backslash B_{\rho_2}(x_0)}\frac{F(\frac{|\omega
|^2}2)}{\psi (r(x))}\frac {\psi (r(x))}{\rho^{\lambda}}\, dv\\
&<\frac {\epsilon}{2} + \frac {\epsilon}{2(L+1)}\int_{B_{\rho}(x_0)\backslash B_{\rho_2}(x_2)}\frac{F(\frac{|\omega
|^2}2)}{\psi (r(x))}\frac {r^{\lambda}}{\rho^{\lambda}}\, dv\\
&\le \frac {\epsilon}{2} + \frac {\epsilon}{2(L+1)}\int_{B_{\rho}(x_0)}\frac{F(\frac{|\omega
|^2}2)}{\psi (r(x))} \, dv\, \\
& < \frac {\epsilon}{2} + \frac {\epsilon}{2} = \epsilon
\endaligned \tag{10.5}
\end{equation}
That is, (5.1) holds.
\end{proof}

\begin{example}
Let $\omega \in A^p(\xi)$ have the growth rate \begin{equation}
\lim_{\rho\rightarrow \infty }\int_{B_\rho(x_0)}\frac{F(\frac{|\omega
|^2}2)}{(\ln r(x))^q} \, dv <\infty\tag{10.6}\end{equation}
  for some number $q \le 1\, .$ Then $\omega $ has slowly divergent $\mathcal{E}_{F,g}-$energy $(5.3)$, as $\psi(r)=(\ln r)^q$ satisfies (5.2) for any number $q \le 1\, .$
Furthermore, as an immediate consequence of Theorem 10.1, $\omega $ has the growth rate \begin{equation}
\int_{B_\rho(x_0)}F(\frac{|\omega |^2}2)\, dv = o(\rho^\lambda )\quad \text{as
} \rho\rightarrow \infty\tag{5.1}\end{equation}
 for any $\lambda > 0\, .$
\end{example}

The following is an example of $\psi(r)$ that does not satisfy (5.2), yet $\omega$
has the growth rate (5.1):

\begin{example}
Let $\omega \in A^p(\xi)$ have the growth rate \begin{equation}
\lim_{\rho\rightarrow \infty }\int_{B_\rho(x_0)}\frac{F(\frac{|\omega
|^2}2)}{(\ln r(x))^{q'}} \, dv <\infty\tag{10.7}\end{equation}
  for some number $q' >1\, .$ Then $\psi(r)=(\ln r)^{q^{\prime}}$ does not satisfy $(5.2)$ for any number $q^{\prime} > 1\, .$
Since $(\ln \rho)^{q^{\prime}}$ goes to infinity slower than $\rho^\lambda\, $ for any $q^{\prime}, \lambda > 0\, ,$ it is evident that $\omega $ has the growth rate $(5.1)\, ,$ via $(10.3)$ for any $\lambda > 0\, .$
\end{example}

\vskip 0.3 true cm

{\bf Acknowledgments}: The authors wish to thank Professors
J.G. Cao and Z.X. Zhou, and Ye Li for their helpful discussions.


\enddocument